\newtheorem{theorem}{Theorem}
\numberwithin{equation}{section}
\numberwithin{theorem}{section}
\theoremstyle{definition}
    \newtheorem{assumption}[theorem]{Assumption}
   \newtheorem{corollary}[theorem]{Corollary}
   \newtheorem{example}[theorem]{Example}
   \newtheorem{definition}[theorem]{Definition}
   \newtheorem{problem}[theorem]{Problem}
   \newtheorem{proposition}[theorem]{Proposition}
   \newtheorem{remark}[theorem]{Remark}
   \newtheorem{lemma}[theorem]{Lemma}
   \newtheorem{notation}[theorem]{Notation}
\newcommand{\conv}[1]{\mathrm{conv}\{{#1}\}}
\newcommand{\NN}{{\mathcal{N}}}
\newcommand{\R}{{\mathbb{R}}}
\newcommand{\relint}{{\mathrm{Relint}}}
\newcommand{\onetwo}{\raisebox{-4pt}{\includegraphics[width=3mm]{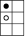}}}
\newcommand{\onefive}{\raisebox{-4pt}{\includegraphics[width=3mm]{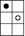}}}
\newcommand{\onethree}{\raisebox{-4pt}{\includegraphics[width=3mm]{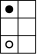}}}
\newcommand{\onesix}{\raisebox{-4pt}{\includegraphics[width=3mm]{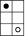}}}
\newcommand{\twotwo}{\raisebox{-4pt}{\includegraphics[width=3mm]{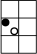}}}
\newcommand{\twothree}{\raisebox{-4pt}{\includegraphics[width=3mm]{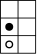}}}
\newcommand{\twosix}{\raisebox{-4pt}{\includegraphics[width=3mm]{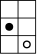}}}
\newcommand{\fourtwo}{\raisebox{-4pt}{\includegraphics[width=3mm]{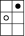}}}
\newcommand{\fourfive}{\raisebox{-4pt}{\includegraphics[width=3mm]{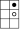}}}
\newcommand{\fourthree}{\raisebox{-4pt}{\includegraphics[width=3mm]{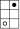}}}
\newcommand{\foursix}{\raisebox{-4pt}{\includegraphics[width=3mm]{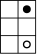}}}
\newcommand{\fivefive}{\raisebox{-4pt}{\includegraphics[width=3mm]{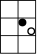}}}
\newcommand{\fivethree}{\raisebox{-4pt}{\includegraphics[width=3mm]{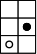}}}
\newcommand{\fivesix}{\raisebox{-4pt}{\includegraphics[width=3mm]{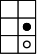}}}
\newcommand{\bigonetwo}{\raisebox{-4pt}{\includegraphics[width=4mm]{12bw.png}}}
\newcommand{\bigonefive}{\raisebox{-4pt}{\includegraphics[width=4mm]{15bw.png}}}
\newcommand{\bigonethree}{\raisebox{-4pt}{\includegraphics[width=4mm]{13bw.png}}}
\newcommand{\bigonesix}{\raisebox{-4pt}{\includegraphics[width=4mm]{16bw.png}}}
\newcommand{\bigtwotwo}{\raisebox{-4pt}{\includegraphics[width=4mm]{22bw.png}}}
\newcommand{\bigtwothree}{\raisebox{-4pt}{\includegraphics[width=4mm]{23bw.png}}}
\newcommand{\bigtwosix}{\raisebox{-4pt}{\includegraphics[width=4mm]{26bw.png}}}
\newcommand{\bigfourtwo}{\raisebox{-4pt}{\includegraphics[width=4mm]{42bw.png}}}
\newcommand{\bigfourfive}{\raisebox{-4pt}{\includegraphics[width=4mm]{45bw.png}}}
\newcommand{\bigfourthree}{\raisebox{-4pt}{\includegraphics[width=4mm]{43bw.png}}}
\newcommand{\bigfoursix}{\raisebox{-4pt}{\includegraphics[width=4mm]{46bw.png}}}
\newcommand{\bigfivefive}{\raisebox{-4pt}{\includegraphics[width=4mm]{55bw.png}}}
\newcommand{\bigfivethree}{\raisebox{-4pt}{\includegraphics[width=4mm]{53bw.png}}}
\newcommand{\bigfivesix}{\raisebox{-4pt}{\includegraphics[width=4mm]{56bw.png}}}
\title[
Enumeration of max-pooling responses with generalized permutohedra
]{Enumeration of max-pooling responses \\with generalized permutohedra}
\author[Escobar]{Laura Escobar}
\address{Department of Mathematics and Statistics\\ Washington University in St.\ Louis \\ 1 Brookings Drive\\ 
St. Louis, MO  63130}
\email{laurae@wustl.edu}
\author[Gallardo]{Patricio Gallardo}
\address{Department of Mathematics, 
University of California, Riverside\\ 
900 University Avenue\\ 
Riverside, CA 92521}
\email{pgallard@ucr.edu} 
\author[Gonz\'alez Anaya]{Javier Gonz\'alez Anaya} 
\address{Department of Mathematics, 
University of California, Riverside\\ 
900 University Avenue\\ 
Riverside, CA 92521 and Department of Mathematics, 
Harvey Mudd College, 
Claremont, CA 91711}
\email{javiergo@hmc.edu}
\author[Gonz\'alez]{Jos\'e L.\ Gonz\'alez} 
\address{Department of Mathematics, 
University of California, Riverside\\ 
900 University Avenue\\ 
Riverside, CA 92521}
\email{jose.gonzalez@ucr.edu} 
\author[Mont\'{u}far]{Guido Mont\'{u}far} 
\address{Departments of Mathematics and Statistics, University of California, Los Angeles, CA 90095\\ and Max Planck Institute for Mathematics in the Sciences, 04103 Leipzig} 
\email{montufar@math.ucla.edu}
\author[Morales]{Alejandro H.\ Morales}
\address{D\'epartement de Math\'ematiques, 
Universit\'e du Qu\'ebec \`a Montr\'eal\\
201 Av. du Président-Kennedy\\
Montréal}  
\email{morales\_borrero.alejandro@uqam.ca}
\begin{document}

\maketitle

\begin{abstract}
We investigate the combinatorics of max-pooling layers, which are functions that downsample input arrays by taking the maximum over shifted windows of input coordinates, and which are commonly used in convolutional neural networks. 
We obtain results on the number of linearity regions of these functions by equivalently counting the number of vertices of certain Minkowski sums of simplices. We characterize the faces of such polytopes and  obtain generating functions and closed formulas for the number of vertices and facets in a 1D max-pooling layer depending on the size of the pooling windows and stride, and for the number of vertices in a special case of 2D max-pooling. 
\smallskip 
\newline 
\emph{Keywords:} generalized permutohedra, transfer-matrix method, max-pooling. 
\end{abstract}

\section{Introduction} 

Convolutional neural networks are central tools in audio, image, and text processing 
%because of their ability to identify 
that can identify complex data features through a hierarchy of computations 
% at a relatively low computational cost 
\cite{NIPS2012_c399862d,Szegedy_2015_CVPR}. 
Part of the success of these architectures comes from including pooling layers, which downsample intermediate feature representations of the data and introduce invariance to local translations. 
We investigate the combinatorial complexity of \emph{max-pooling}, which is one of the most commonly used forms of pooling. A max-pooling function is a piecewise-linear function that takes an array as input and returns an array collecting the maximum values over different windows of input coordinates. 
The combinatorial analysis of max-pooling functions is interesting in its own right, as it involves distinctive classes of Minkowski sums of simplices and generalized permutohedra, and it contributes to a more complete theoretical understanding of convolutional networks. 

%In spite of their important role in convolutional networks, these functions have not been investigated in the same level of detail as other components of neural networks. 
% Max-pooling layers have a combination of properties (restricted connectivity, a particular type of non-linearity, and no trainable weights) that is not well covered by previous works. 

%From a mathematical perspective, max-pooling layers are piecewise-linear functions. 
%The combinatorics of piecewise-linear functions represented by neural networks is a topic that has received significant attention in recent years. 

For piecewise linear functions, we may regard the number of linear regions as a complexity measure. The number of linear regions of the functions represented by neural networks with piecewise linear activation functions offers a combinatorial perspective to compare the representational power of different network architectures. In particular, this has been used to establish differences between deep and shallow network architectures \cite{pascanu2013number, NIPS2014_109d2dd3, pmlr-v49-telgarsky16}. 
The problem of enumerating the linear regions of the functions represented by different network architectures has received significant attention in recent years, 
with several advances for deep fully-connected networks \cite{arora2018understanding,
Serra-2018-bounding, 
Hinz-2019-framework}, 
convolutional networks \cite{pmlr-v119-xiong20a}, 
graph neural networks \cite{pmlr-v139-bodnar21a}, 
as well as the development of connections between neural networks and polyhedral theory \cite{huchette2023deep}, 
power diagrams \cite{NIPS2019_9712}, 
tropical geometry and polytopes \cite{pmlr-v80-zhang18i, 
Charisopoulos2018ATA, 
maragos2021tropical, 
NEURIPS2021_1b9812b9, 
montufar2021sharp}. 
Max-pooling layers have a distinctive combination of properties that is not well covered by previous works. A discussion of the topic appeared in the blog post \cite{TM}. 
Concretely, in contrast to other components of neural networks, the fixed weights and restricted connectivity of max-pooling layers implies that one cannot resort to genericity arguments to simplify the enumeration problem. This makes it difficult to obtain precise estimates beyond certain upper and lower bounds. 
     
\begin{figure}
      \centering
      % figura (a)
  \begin{subfigure}[b]{0.45\textwidth}
    \centering
    \begin{tikzpicture}[x=.3cm, y=.3cm]
    
% RECEPTIVE FIELDS
\foreach \o in {0,2,4,6}{\node [circle,draw, %gray, 
minimum size=.2cm, inner sep=0pt](o\o) at ({8-2+\o}, {-1 -.4 +.2*\o}) {}; 
\foreach \x in {1,...,4}{
     \foreach \y in {3}{ 
     \pgfmathsetmacro\mytemp{\y +.2*(\x-1+\o )}; 
     \pgfmathsetmacro\mytempd{\x-1 + \o}; 
     \draw [color=gray!50, ->, thin] (\x-1 + \o, \mytemp) -- (o\o); 
} } }

% INPUTS 
\foreach \x in {0,...,9}{
	\foreach \y in {3,...,3}{
		\pgfmathsetmacro\mytemp{\y +.2*\x}
		\node [circle,draw, fill=white,,minimum size=.2cm, inner sep=0pt](n\x\y) at (\x , \mytemp) {}; 
}}

% HIGHLIGHTED RECEPTIVE FIELD 
\node [circle,draw, fill=blue!20, minimum size=.2cm, inner sep=0pt](o) at (8, -1) {}; 
	\foreach \x in {2,...,5}{\foreach \y in {3,...,3}{
		\pgfmathsetmacro\mytemp{\y +.2*\x}
		\node [circle, draw, minimum size=.2cm, inner sep=0pt, fill =blue!20] at (\x , \mytemp) {};
        \draw [color=blue!50,->, thick] (n\x\y) -- (o);   
}}

% LABELS 
% \node [] at (3, 7) {$\Lambda$}; 
 \draw [decorate,  decoration = {brace,raise=20pt}] (0,3) --  (9,3+1.8) node[pos=0.5,above=22pt,black]{\hspace{-.5cm}$\Lambda$}; 

 \draw [decorate, decoration = {brace,raise=5pt}] (2,3.4) --  (5,4) node[pos=0.5,above=7pt,black]{\small $\lambda_j$}; 
 
\node at (19.5,-.7) {$\left(\max\{x_i\colon i\in \lambda_j\}\right)_{j\in\Lambda'}$};
 
% \node [] at (10,-5) {$\Lambda'$}; 
 \draw [decorate, decoration = {brace,raise=8pt}] (12,-.2) -- (6,-1.4)  node[pos=0.5,below=10pt,black]{\hspace{.5cm}$\Lambda'$}; 

\end{tikzpicture}
    \caption{}
\end{subfigure}
% figure (b)
\begin{subfigure}[b]{0.45\textwidth}
  \centering
  \begin{tikzpicture}[x=.3cm, y=.3cm]
 \draw [rounded corners, thin, fill = gray!10] (2.5,2) -- (2.5,5) -- (5.5,5.6) -- (5.5,2.6) -- cycle; 

\node [inner sep=.1pt] (la) at (5.2,7.6) {$\lambda_j$}; 
\draw (la) to [bend right=15] (4.5,5.6); 

\node [inner sep=.1pt] at (0,6.7) {$\Lambda$}; 
 
% INPUTS
\foreach \x in {0,...,9}{
	\foreach \y in {1,...,5}{
		\pgfmathsetmacro\mytemp{\y +.2*\x}
		\node [circle,draw,minimum size=.2cm, inner sep=0pt](n\x\y) at (\x , \mytemp) {}; 
}}
\node [circle,draw=white,minimum size=.2cm, inner sep=0pt](o) at (8, -1) {}; 
% \node [] at (3, 8) {Input}; 
% \node [] at (10,-5) {Output}; 
	\foreach \x in {3,...,5}{\foreach \y in {2,...,4}{
		\pgfmathsetmacro\mytemp{\y +.2*\x}
		\node [circle,draw,minimum size=.2cm, inner sep=0pt, fill =blue!20] at (\x , \mytemp) {};
}}
% OUTPUT 
\node [circle, fill=blue!20, draw, minimum size=.2cm, inner sep=0pt](o) at (10, 0) {}; 

\node[node distance=.5cm, below of = o] {$\max\{x_i\colon i\in \lambda_j\}$};
	\foreach \x in {3,...,5}{\foreach \y in {2,...,4}{
		\draw [color=blue!50,->, thin] (n\x\y) -- (o);}}

\end{tikzpicture}
  \caption{}
\end{subfigure}
   \caption{Illustration of max-pooling layer in (A) one dimension with a $1\times 4$ window $\lambda_j$ (blue nodes) and 
   (B) two dimensions with a $3\times 3$ window $\lambda_j$ (blue nodes).}
   \label{fig: 1d and 2d layers}
\end{figure}

%To enumerate the linearity regions of a max-pooling  function, we consider the equivalent problem of enumerating the vertices of its Newton polytope. 
%
%The general approach was proposed in \cite{pmlr-v80-zhang18i} to study ReLU networks and has been exploited in \cite{montufar2021sharp} to study max-out networks. 
%As we will discuss in Section~\ref{sec:newton-polytope}, the Newton polytope of a max-pooling function is a Minkowski sum of simplices. 
%We are concerned with specific classes of Minkowski sums of simplices and seek to obtain explicit closed formulas for the generating functions, linear recurrences for the number of vertices, and describe the asymptotics as the number of simplices tends to infinity. 

A max-pooling function is a piecewise-linear function $f\colon \mathbb{R}^{\Lambda} \longrightarrow\mathbb{R}^{\Lambda'}$ such that $(f(x))_{j}=\max\{x_i\colon i\in\lambda_j\}$, where $\{\lambda_j\}$ is a fixed collection of pooling windows within $\Lambda$. 
See Figure~\ref{fig: 1d and 2d layers} for an illustration and Definition~\ref{def:SettingProblem} for details. 
We will study the number of linear regions of max-pooling functions by considering the equivalent problem of enumerating the faces of their Newton polytopes, which are Minkowski sums of standard simplices, as detailed in Section~\ref{sec:background}. 
Therefore, we are concerned with the following problem. 

\begin{problem} 
Let $\lambda_j\subseteq \Lambda$, $j\in\Lambda'$. 
For each $j\in \Lambda'$ consider the simplex $\Delta_{\lambda_
j} = \operatorname{conv}\{e_i\colon i\in \lambda_j\}$, where $e_i$ is the $i$th canonical vector in $\mathbb{R}^{\Lambda}$. 
What is the number of faces of each dimension, in particular vertices, 
of the Minkowski sum $P = \sum_{j\in \Lambda'} \Delta_{\lambda_
j}$? 
\end{problem} 

The general problem of counting the faces of Minkowski sums of polytopes is a subject of significant interest; see, e.g., \cite{Gritzmann-Sturmfels-1993,Fukuda-Weibel-2007,Sanyal-2009,Weibel-2012,%Karavelas-2013,
Adiprasito-Sanyal-2016,Karavelas-etal-2016,montufar2021sharp}. 
In particular, the upper bound theorem for Minkowski sums \cite{Adiprasito-Sanyal-2016} states that among all sums of polytopes with a given number of vertices, the sum of a Minkowski neighbourly family attains the maximum number of faces. An explicit formula for the maximum number of vertices was obtained in \cite[Thm.\ 3.7]{montufar2021sharp}. 
The more specific case of faces of sums of simplices has been studied in \cite{10.1093/imrn/rnn153,PRW,Agnarsson2009,Agnarsson2013,BBM}, obtaining combinatorial models for the faces. However, even with these combinatorial models, counting the faces of such polytopes is a  computationally hard problem (see Remark~\ref{rem: complexity faces}). 

In this paper we consider a class of polytopes obtained by endowing the $\lambda_j$'s with a certain structure that arises from the structure of max-pooling layers, see Definition~\ref{def:SettingProblem}. 
Since sums of standard simplices are generalized permutohedra, we are able to relate the faces of our polytopes with certain acyclic graphs. 
This is a new combinatorial model developed using \cite{PRW} and related to \cite{BBM}. 
In the case of vertices we use the structure of the $\lambda_j$'s to put these acyclic graphs in correspondence with walks in a directed graph. 
We then use the \emph{transfer-matrix method} to give generating functions for the number of vertices.
We obtain explicit closed formulas for the generating functions, linear recurrences for the number of vertices, and describe the asymptotics as the number of simplices tends to infinity. 
We now describe our contributions in more detail. 

\subsection{Faces of Minkowski sums of simplices}

Every face $F$ of $P$ can be uniquely written as a Minkowski sum $F = \sum_{r\in \Lambda'} F_{r}$ of faces $F_r\subseteq\Delta_{\lambda_r}$ of the summand polytopes. 
However, not every such sum 
is a face of $P$. 
In Proposition~\ref{prop: char vertices minkowski sum of simplices} of Section~\ref{sec_faces_dag} we give a criterion to determine whether a sum $F = \sum_{r\in \Lambda'} F_{r}$ of faces $F_r\subseteq\Delta_{\lambda_r}$ is a face of $P$. Moreover, if $F$ is a face of $P$ we describe its corresponding cone in the normal fan $\NN(P)$ of $P$ and its dimension. Our method consists
of constructing a directed graph that is acyclic if and only if $F$ is a face. The proofs of all the following results use this criterion. 

\subsection{One-dimensional input layers} 
Here we consider the case of one-dimensional input arrays. 
Given positive integers $n,k,s$, let $P_{n,k,s}$ denote the polytope given by the Minkowski sum of the $n$ simplices 
$\Delta_{\{si,si+1,\ldots,si+k-1\}}$ for $i=0,\ldots,n-1$. Let $b_n^{(k,s)}$ denote the number of vertices of the polytope $P_{n,k,s}$, which is also equal to the number of linearity regions of a max-pooling function over a $1 \times (s(n-1)+k)$ input with pooling windows of size $1\times k$ and stride $s$ (see Figure~\ref{fig: one layer}).

  \begin{figure}[h]
    \centering
    \begin{tikzpicture}[x=.3cm, y=.3cm]
% INPUTS
\foreach \x in {0,...,9}{
	\foreach \y in {3,...,3}{
		\pgfmathsetmacro\mytemp{\y +.2*\x}
		\node [circle,draw,minimum size=.2cm, inner sep=0pt](n\x\y) at (\x , \mytemp) {}; 
	}
}
\node [circle,draw=white,minimum size=.2cm, inner sep=0pt](o) at (8, -1) {}; 
% \node [] at (3, 8) {Input}; 
% \node [] at (10,-5) {Output}; 

% RECEPTIVE FIELD  
	\foreach \x in {2,...,5}{\foreach \y in {3,...,3}{
		\pgfmathsetmacro\mytemp{\y +.2*\x}
		\node [circle,draw,minimum size=.2cm, inner sep=0pt, fill =blue!20] at (\x , \mytemp) {};}}
% OUTPUT 
\node [circle,draw,minimum size=.2cm, inner sep=0pt, fill=blue!20](o) at (8, -1) {}; 
\node[node distance=.5cm, below left of = o] {$\max\{x_i\colon i\in \lambda_r\}$};
% EDGES FROM RECEPTIVE FIELDS TO OUTPUTS 		
\foreach \x in {2,...,5}{\foreach \y in {3,...,3}{
			\draw [color=blue!50,->, thin] (n\x\y) -- (o);}}
\end{tikzpicture}
\setlength{\fboxrule}{0pt}
\scalebox{.7}{ 
\begin{tikzpicture}
\foreach \x in {0,...,15}{ 
\draw[thick, gray] (\x,0) -- (\x,1) -- (\x+1,1) -- (\x+1,0) -- cycle; 
}
\node[shift=({.5,.5})] at (0,0) {$0$};
\node[shift=({.5,.5})] at (1,0) {$1$};
\node[shift=({.5,.5})] at (2,0) {$\cdots$};
\node[shift=({.5,.5})] at (3,0) {$s$};
\node[shift=({.5,.5})] at (4,0) {$\cdots$};
\node[shift=({.5,.5})] at (5,0) {$k-1$};
\node[shift=({.5,.5})] at (6,0) {$k$};
\node[shift=({.5,.5})] at (7,0) {$\cdots$};
\node[shift=({0,0}),anchor=south west] at (8,0) {\rotatebox[]{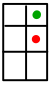}{\fcolorbox{white}{white}{$s+k-1$}}};
\node[shift=({.5,.5})] at (9,0) {$\cdots$};
\node[shift=({0,0}),anchor=south west] at (10,0) {\rotatebox[]{45}{\fcolorbox{white}{white}{$s(n-1)$}}};
\node[shift=({0,0}),anchor=south west] at (11,0) {\rotatebox[origin=lB]{45}{\fcolorbox{white}{white}{$s(n-1)+1$}}};
\node[shift=({.5,.5})] at (12,0) {$\cdots$};
\node[shift=({.5,.5})] at (13,0) {$\cdots$};
\node[shift=({.5,.5})] at (14,0) {$\cdots$};
\node[shift=({0,0}),anchor=south west] at (15,0) {\rotatebox[]{45}{\fcolorbox{white}{white}{$s(n-1)+(k-1)$}}};

\draw[red,shift=({0,-1.125}),fill=gray!20, thick,rounded corners] (0,.125) -- (0,.875) -- (6,.875) -- (6,.125) -- cycle; 
\node[shift=({3,-1.125+.5})] {$\lambda_0$}; 

\draw[blue,shift=({3,-2}),fill=gray!20, thick,rounded corners] (0,.125) -- (0,.875) -- (6,.875) -- (6,.125) -- cycle; 
\node[shift=({6,-2+.5})] {$\lambda_1$}; 

\node[shift=({9.5,-2})] {$\cdots$}; 

\draw[green!80!black,shift=({10,-3}),fill=gray!20, thick, rounded corners] (0,.125) -- (0,.875) -- (6,.875) -- (6,.125) -- cycle; 
\node[shift=({13,-3+.5})] {$\lambda_{n-1}$}; 
\end{tikzpicture}
}

    \caption{The $1\times (s(n-1)+k)$ input with its $n$ pooling windows $\lambda_0,\ldots,\lambda_{n-1}$ of size $ 1 \times k$ and stride $s$. 
    }
    \label{fig: one layer}
\end{figure}

We prove in Theorem~\ref{lemma: gf vertices 1 dim} that the generating functions of the sequences $(b_n^{(k,s)})_{n\geq 1}$ are {\em rational} by using a characterization of faces of $P_{n,k,s}$ from Section~\ref{sec:acyclic.graph} and the {\em transfer-matrix method} (see Section~\ref{sec: transfer matrix}) from enumerative combinatorics. Moreover, we have closed forms for these generating functions for the cases of {\em large strides}, i.e.\ $\lceil k/2\rceil \leq s \leq k-2$, and {\em proportional strides}, i.e.\ $k=s(r+1)$ for a nonnegative integer $r$. 

\begin{theorem}[Theorems \ref{thm:LargeStrides} and \ref{thm:ProporStrides}]
Fix positive integers $k$ and $s$. 
\begin{enumerate}
    \item If $s \in \{ \lceil k/2\rceil,\ldots,k \}$, then the generating function of $(b_n^{(k,s)})_{n\geq 1}$  is given by
\[
1+\sum_{n\geq 1} b^{(k, s)}_{n} x^n
= \frac{1}{1-kx+(k-s)(k-s-1)x^2}.
\]

\item If $k=s(r+1)$ for a nonnegative integer $r$, then the generating function of $(b_n^{(k,s)})_{n\geq 1}$ is given by
\begin{align*}
1+\sum_{n\geq 1} b_{n}^{(k,s)} x^n = 
\frac{
1+(rs-s-2)x-(rs-1)x^{2}+sx^{r+1}
}
{
1 - 2(s+1) x + (s +1)^2x^{2} + s x^{r + 1} - s^{2}(r + 1) x^{r + 2} + s(r s -1)x^{r + 3} 
}.
\end{align*}
In particular, if $s=1$, we obtain
 \begin{align*}
1+\sum_{n\geq 1} b_{n}^{(k,1)} x^n = 
\frac{1+(k-4)x-(k-2)x^2 + x^k}
{     1 - 4x+ 4x^{2} + x^{k} - kx^{k+1} + (k-2)x^{k+2}}.
\end{align*}
\end{enumerate}
\end{theorem}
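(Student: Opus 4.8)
The plan is to derive both generating functions from the transfer-matrix method of Section~\ref{sec: transfer matrix}, using the face criterion of Proposition~\ref{prop-dag-faces} to model the vertices of $P_{n,k,s}$ combinatorially. Specialized to vertices, that criterion says $b_n^{(k,s)}$ counts the tuples $(j_0,\dots,j_{n-1})$ with $j_i\in\lambda_i$ whose associated directed graph $D=D(j_0,\dots,j_{n-1})$ is acyclic, where $D$ has an edge $a\to j_i$ for every $i$ and every $a\in\lambda_i\setminus\{j_i\}$. Every vertex of a cycle of $D$ is one of the chosen indices $j_i$, and $\lambda_i\cap\lambda_{i'}=\emptyset$ as soon as $|i-i'|>d$, where $d=1$ in case (1) and $d=r$ in case (2) (both read off from $\lambda_i=\{si,\dots,si+k-1\}$). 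Hence whether a prefix $(j_0,\dots,j_m)$ extends to an acyclic tuple depends only on bounded data about $j_{m-d+1},\dots,j_m$: this gives a finite state set $\mathcal S$, a transfer matrix $T$, and vectors with $b_n^{(k,s)}=u^{\mathrm{fin}}\,T^{\,n-1}u^{\mathrm{init}}$, so that $1+\sum_{n\ge1}b_n^{(k,s)}x^n$ is rational with denominator $\det(I-xT)$ up to cancellation. This already proves Theorem~\ref{lemma: gf vertices 1 dim}; the two parts of the present theorem amount to making $T$ explicit and simplifying.

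Case (1): large strides. For $s\in\{\lceil k/2\rceil,\dots,k-2\}$ we have $\lambda_i\cap\lambda_{i'}=\emptyset$ for $|i-i'|\ge2$, so the only directed cycles of $D$ are the $2$-cycles $j_i\leftrightarrow j_{i+1}$, and such a cycle occurs precisely when $j_i$ and $j_{i+1}$ are two distinct elements of the $(k-s)$-element overlap $\lambda_i\cap\lambda_{i+1}$. Since $2(k-s)\le k$ here, each interior window $\lambda_i$ is the disjoint union of its overlap with $\lambda_{i-1}$ (size $k-s$), a middle block (size $2s-k$), and its overlap with $\lambda_{i+1}$ (size $k-s$). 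Taking the state of window $i$ to be the single bit ``$j_i$ lies in the overlap with $\lambda_{i+1}$'' one computes $T=\begin{pmatrix} s & 2s-k+1\\ k-s & k-s\end{pmatrix}$, with $u^{\mathrm{init}}=(s,\,k-s)^{\top}$ (window $0$ alone) and $u^{\mathrm{fin}}=(1,1)$. Then $\det(I-xT)=1-kx+(k-s)(k-s-1)x^2$, and expanding $1+x\,u^{\mathrm{fin}}(I-xT)^{-1}u^{\mathrm{init}}$ via $\adj(I-xT)$ one finds the numerator collapses to $1$; this is Theorem~\ref{thm:LargeStrides}.

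Case (2): proportional strides. Writing $k=s(r+1)$, partition the ground set $\{0,\dots,s(n-1)+k-1\}$ into blocks $\beta_j=\{js,\dots,(j+1)s-1\}$, so $\lambda_i=\beta_i\cup\beta_{i+1}\cup\cdots\cup\beta_{i+r}$ consists of exactly $r+1$ blocks. The crucial reduction is that in an acyclic tuple, any two windows whose chosen indices lie in the same block must choose the same index of that block — otherwise a $2$-cycle appears — so, contracting each block to its chosen representative, acyclicity of $D$ becomes acyclicity of the induced digraph on blocks, and $b_n^{(k,s)}=\sum_c s^{|\operatorname{im}c|}$, summed over block assignments $c\colon\{0,\dots,n-1\}\to\mathbb Z$ with $c(i)\in\{i,\dots,i+r\}$ whose block-digraph is acyclic. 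The transfer matrix is now built on states recording the last $r$ values of $c$ together with enough of their relative order (equivalently, of the relevant heights) to certify that appending a new window keeps the block-digraph acyclic; $\mathcal S$ is still finite, the generating function is still rational, and computing $\det(I-xT)$ and the appropriate cofactor and simplifying gives the displayed formula of Theorem~\ref{thm:ProporStrides}. The case $s=1$ follows by substitution.

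Obstacle. The algebra in both parts is routine; the real difficulty lies in case (2), in two places. First, one must identify exactly which order data on the last $r$ chosen blocks is needed for the incremental acyclicity test — unlike case (1), genuine $\ell$-cycles with $3\le\ell\le r+1$ do occur and must be detected, so a single bit no longer suffices — and then verify that this state set is small enough that $\det(I-xT)$ has degree $r+3$. Second, one must carry out the simplification to the compact closed form, which is not in lowest terms: for example at $r=1$ (so $k=2s$) the numerator becomes $(1-x)^2$ and the denominator $(1-x)^2\bigl(1-2sx+s(s-1)x^2\bigr)$, recovering the $s=\lceil k/2\rceil$ instance of part (1) — a useful consistency check throughout.
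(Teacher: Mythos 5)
Your treatment of part (1) is correct and amounts to the same underlying count as the paper's, packaged differently: the paper derives the recurrence $b^{(k,s)}_{n+2}=kb^{(k,s)}_{n+1}-(k-s)(k-s-1)b^{(k,s)}_n$ directly by an overcount-and-subtract argument on the last two letters, whereas you instantiate a $2\times 2$ transfer matrix on the bit ``chosen index lies in the overlap with the next window.'' Your matrix, initial/final vectors, $\det(I-xT)$, and the collapse of the numerator to $1$ all check out, and the $r=1$ consistency check against part (2) is a nice sanity test.

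Part (2), however, has a genuine gap, and it sits exactly where you flag your ``obstacle.'' You assert that for proportional strides ``genuine $\ell$-cycles with $3\le\ell\le r+1$ do occur and must be detected, so a single bit no longer suffices,'' and you therefore propose a state space recording the last $r$ chosen blocks together with unspecified order data. The paper's key structural result, Lemma~\ref{char: Gv acyclic}, shows this is the wrong picture: for \emph{any} $k,s$ the interval structure of the windows forces every cycle of $G_{w(v)}$ to coexist with a $2$-cycle between \emph{consecutive} letters of $w(v)$, so acyclicity is always a condition on consecutive pairs and the transfer matrix is simply the $k\times k$ matrix $A_{k,s}$ on standardized single letters (Theorem~\ref{lemma: gf vertices 1 dim}). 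Without this lemma (or a worked-out substitute), your state space is not pinned down, so neither the rationality with the claimed denominator degree nor the subsequent algebra can be carried out; you concede as much. Moreover, even granting the correct transfer matrix, the remaining work is not ``routine'': the paper needs a block-matrix/Schur-complement computation to evaluate $\det(I-xA_{k,s})$ (Lemma~\ref{lemma:charPoly}), and it does \emph{not} obtain the numerator from the cofactor sum \eqref{eq:numeratorGF prop srides} at all --- instead it computes $b^{(k,s)}_1,\dots,b^{(k,s)}_{r+3}$ in closed form (Lemmas~\ref{lem:CoeffNmerator}--\ref{lemma: br+3}) and reads the numerator off from the bounded degree of $(x-1)^2P(x)$. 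Your block decomposition of the ground set and the identity $b^{(k,s)}_n=\sum_c s^{|\operatorname{im}c|}$ over acyclic block assignments is a correct and potentially useful reformulation, but as written part (2) is a plan rather than a proof: the two essential ingredients --- the reduction of acyclicity to consecutive $2$-cycles and the explicit evaluation of the determinant and numerator --- are missing.
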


Note that when $s\geq k-1$, there is no overlap between the windows or just an overlap of one vertex and  so we have that $b^{(k,s)}_n=k^{n}$ (see Remark~\ref{rem: case r=0}). We also give asymptotics for the number of vertices for the first case above (Corollary~\ref{cor: asympt LargeStrides}), and for the general case (Corollary~\ref{cor: asymptotics gf vertices 1 dim}) using the Perron--Frobenius theorem. 

In the following result, we also calculate the number of facets of $P_{n,k,s}$ and in Corollary~\ref{cor: H representation of P_n,k,s} give the inequality description of this polytope. 

\begin{theorem}[Theorem~\ref{thm_facets_1d}]
Let $s,k$ be positive integers. If 
$k>s+1$, then the number of facets of $P_{n,k,s}$ is $(s+2)(n-1)+k$. 
If 
$1<k\le s+1$, then the number of facets of $P_{n,k,s}$ is $kn$.
\end{theorem}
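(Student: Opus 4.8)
\emph{Setup.} Write $m=s(n-1)+k$ and $\lambda_i=\{si,si+1,\dots,si+k-1\}$ for $0\le i\le n-1$, so $P_{n,k,s}=\sum_{i=0}^{n-1}\Delta_{\lambda_i}$. Being a Minkowski sum of coordinate simplices, $P_{n,k,s}$ is a generalized permutohedron, so its normal fan, viewed in $\mathbb{R}^{m}/\mathbb{R}(1,\dots,1)$, coarsens the braid fan, whose rays are the classes of the vectors $e_B:=\sum_{j\in B}e_j$ with $\emptyset\ne B\subsetneq\{0,\dots,m-1\}$. Hence every facet is exposed by some $e_B$, namely
\[
F_B=\sum_{i:\,\lambda_i\cap B\ne\emptyset}\Delta_{\lambda_i\cap B}+\sum_{i:\,\lambda_i\cap B=\emptyset}\Delta_{\lambda_i}=\Bigl\{x\in P_{n,k,s}:\ \textstyle\sum_{j\in B}x_j=\#\{i:\lambda_i\cap B\ne\emptyset\}\Bigr\},
\]
and the facet count reduces to deciding which $B$ satisfy $\dim F_B=\dim P_{n,k,s}-1$. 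Each $F_B$ is automatically a face of $P_{n,k,s}$; applying the dimension formula of Proposition~\ref{prop-dag-faces} to the Minkowski sum above yields $\dim F_B=|V_B|-c_B$, where $V_B$ is the union of the sets $A_i(B):=\lambda_i\cap B$ (or $A_i(B):=\lambda_i$ when that intersection is empty) and $c_B$ is the number of connected components of the hypergraph on $V_B$ with hyperedge set $\{A_i(B)\}_i$. The same formula applied to $P_{n,k,s}$ gives $\dim P_{n,k,s}=m-1$ when $k\ge s+1$ (consecutive windows overlap, so the hypergraph is connected and covers $\{0,\dots,m-1\}$) and $\dim P_{n,k,s}=n(k-1)$ when $k\le s+1$ (for $k\le s$ the windows are disjoint and $P_{n,k,s}$ is a product of $n$ simplices; both formulas agree at $k=s+1$).

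\emph{The regime $k\ge s+1$.} Set $U=\{0,\dots,m-1\}\setminus V_B$. No hyperedge $A_i(B)$ meets both $B$ and its complement, so $c_B=q'+p'$, where $q'\ge1$ counts the components contained in $B$ and $p'$ the components contained in $B^{c}$; since consecutive windows overlap, $p'$ equals the number of maximal runs of $B^{c}$ containing a whole window. Therefore
\[
\dim P_{n,k,s}-\dim F_B=(m-|V_B|)+(c_B-1)=|U|+q'+p'-1,
\]
so $F_B$ is a facet precisely when $|U|+q'+p'=2$. Since $p'=0$ forces $B^{c}\subseteq U$ and hence $|U|\ge1$, the only possibilities are $(|U|,q',p')=(1,1,0)$ and $(0,1,1)$. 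In the first, $B^{c}=\{j\}$ and the hypergraph on $\{0,\dots,m-1\}\setminus\{j\}$ is connected; this holds for every $j$ if $k\ge s+2$ (consecutive overlaps have size $k-s\ge2$), while if $k=s+1$ it fails exactly for the $n-1$ ``junction'' coordinates $s,2s,\dots,s(n-1)$, the unique points shared by consecutive windows (no two nonconsecutive windows overlapping). In the second, $B^{c}$ is a single maximal run covered entirely by the windows it contains and $B$ is connected; a window joining an initial run of $B$ to a final run of $B$ would need width at least $|B^{c}|+2$, impossible since windows have width $k\le|B^{c}|$, so $B$ is a proper initial segment $\{0,\dots,\ell-1\}$ or a proper final segment $\{m-\ell,\dots,m-1\}$, and coverage of the extreme coordinate of $B^{c}$ by an internal window forces $s\mid\ell$; conversely every $\ell\in\{s,2s,\dots,(n-1)s\}$ gives such a facet. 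This contributes $n-1$ facets of each segment type. Hence, since distinct proper nonempty $B$ give distinct facets (the normal fan lies in $\mathbb{R}^m/\mathbb{R}(1,\dots,1)$ because $\dim P_{n,k,s}=m-1$), the number of facets is $m+2(n-1)=(s+2)(n-1)+k$ when $k>s+1$, and $\bigl(m-(n-1)\bigr)+2(n-1)=m+(n-1)=(s+1)n=kn$ when $k=s+1$.

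\emph{The regime $k\le s$ and the $H$-description.} Here the windows are pairwise disjoint, $P_{n,k,s}=\prod_{i}\Delta_{\lambda_i}$ is a product of $n$ simplices, and its facets are the $nk$ coordinate hyperplane faces $\{x_j=0\}$ with $j$ in some window; this equals $kn$, completing the second case. Reading off the admissible $B$'s above also gives the inequality description of Corollary~\ref{cor: H representation of P_n,k,s}: in the regime $k>s+1$, the facets are cut out by $x_j\ge0$ for all $j$ and by $\sum_{j=0}^{\ell-1}x_j\le \ell/s$ and $\sum_{j=m-\ell}^{m-1}x_j\le \ell/s$ for $\ell\in\{s,2s,\dots,(n-1)s\}$, together with $\sum_{j=0}^{m-1}x_j=n$, with the analogous list dropping the junction inequalities when $k=s+1$.

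\emph{Main obstacle.} The crux is the completeness step in the regime $k\ge s+1$ — showing that $|U|+q'+p'=2$ leaves only the two configurations above. This demands a careful description of $|U|$, $q'$, $p'$ in terms of the run-decomposition of $B$: one must track which maximal runs of $B$ are merged into one component by a common window (only runs lying inside a single window of width $k$ are merged) and must account for the uncovered ``fringes'' at the two ends of each $B^{c}$-run $[a,b]$, whose sizes $(-a)\bmod s$ and $(b+1)\bmod s$ depend on the stride. The borderline overlap $k-s=1$ — which is what makes the junction coordinates fail and produces the $kn$ count instead of $(s+2)(n-1)+k$ — is the subtle sub-case to dispatch cleanly.
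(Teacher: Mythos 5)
Your proof is correct, and while it rests on the same foundations as the paper's argument (the generalized-permutohedron structure of $P_{n,k,s}$ and the face/dimension criterion of Proposition~\ref{prop-dag-faces}), the execution is genuinely different. The paper identifies facets with two-block acyclic graphs $B\to A$ (Lemma~\ref{lem_graphs_facets}) and then classifies the admissible partitions by direct structural arguments: Lemma~\ref{lem_vp_facets} uses chains of equivalent elements to show that $|A|>1$ forces $A$ to be a union of consecutive windows anchored at one end, and the $|A|=1$ case is settled by exhibiting explicit face lists $\Pi$ realizing each candidate and ruling out the junction coordinates when $k=s+1$. You instead parametrize candidate facets by the rays $e_B$ of the braid fan, observe that $F_B=P_{e_B}$ is automatically a face, and convert facet-ness into the numerical condition $|U|+q'+p'=2$ on the associated hypergraph; solving this equation yields the same two families (singleton $B^{c}$, and $B$ an initial or final segment with $s\mid|B|$) without having to construct realizing lists $\Pi$, since realizability is built into the exposed-face formulation. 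Your interval-width argument ($k\le|B^{c}|$ versus $k\ge|B^{c}|+2$) replaces the paper's chain argument in Lemma~\ref{lem_vp_facets}, and your identification of the failing coordinates $s,2s,\dots,s(n-1)$ when $k=s+1$ matches the paper's analysis (and in fact corrects a small off-by-one in how the paper writes those junction points). The counts in all regimes agree with the statement of Theorem~\ref{thm_facets_1d}, and the case $1<k\le s$ is handled, as in Remark~\ref{rem_disjoint}, by noting that $P_{n,k,s}$ is then a product of $n$ simplices.
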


\subsection{Two-dimensional input layers} 
Here we consider two-dimensional input arrays, focusing on a special setting. 
In the Euclidean space $\mathbb{R}^{3 \times n} \cong \mathbb{R}^{3n}$ with basis $\{e_{i,j} \ | \  0 \leq i \leq 2,\ 0 \leq j \leq n-1 \}$, 
let $Q_n$ be the Minkowski sum of the $2(n-1)$ simplices 
$\Delta_{i,j}=\conv{e_{i,j},e_{i,j+1},e_{i+1,j},e_{i+1,j+1}}$ for all 
$0\leq i \leq 1$ and $0 \leq j \leq n-2$. 
Let $V_n$ be the number of vertices of $Q_n$, which is also equal to the number of linearity regions of a max-pooling function with a $3 \times n$ input and pooling windows of size $2\times 2$ and stride one (see Figure~\ref{fig: board case 3xn}).

\begin{figure}[h]
  \centering
   \begin{tikzpicture}[x=.3cm, y=.3cm]
% INPUTS
\foreach \x in {0,...,9}{
	\foreach \y in {1,...,3}{
		\pgfmathsetmacro\mytemp{\y +.2*\x}
		\node [circle,draw,minimum size=.2cm, inner sep=0pt](n\x\y) at (\x , \mytemp) {}; 
	}
}
\node [circle,draw=white,minimum size=.2cm, inner sep=0pt, fill=blue!20](o) at (8, -1) {}; 
% \node [] at (3, 8) {Input}; 
% \node [] at (10,-5) {Output}; 

% RECEPTIVE FIELD  
	\foreach \x in {0,...,1}{\foreach \y in {2,...,3}{
		\pgfmathsetmacro\mytemp{\y +.2*\x}
		\node [circle,draw,minimum size=.2cm, inner sep=0pt, fill =blue!20] at (\x , \mytemp) {};}}
% OUTPUT 
\node [circle,draw,minimum size=.2cm, inner sep=0pt](o) at (8, -1) {}; 
\node[node distance=.5cm, below of = o] {$\max\{x_i\colon i\in \lambda_r\}$};
% EDGES FROM RECEPTIVE FIELDS TO OUTPUTS 		
\foreach \x in {0,...,1}{\foreach \y in {2,...,3}{
			\draw [color=blue!50,->, thin] (n\x\y) -- (o);}}
\end{tikzpicture}
\qquad \quad
  \includegraphics[scale=0.6]{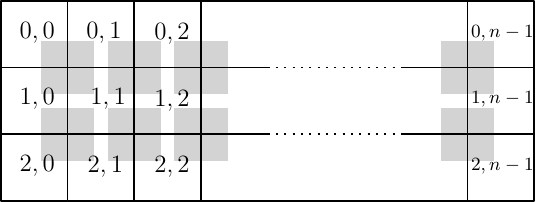}
  \caption{The $3\times n$ input with the $2n-2$ pooling windows of size $2\times 2$.}
  \label{fig: board case 3xn}
\end{figure}

\begin{theorem}[Theorem~\ref{recurrence.2D}]  \label{theorem.2D.3xn.intro}
The number $V_n$ of vertices of the polytope $Q_n$ 
is given by the generating function
\begin{equation*}
x \ + \ \sum_{n\geq 2} V_n \, x^n \  = \ x + 14x^2 + 150x^3+1536x^4+ \cdots = \frac{x+x^2-x^3}{1-13x+31x^2-20x^3+4x^4}. 
\end{equation*}
\end{theorem}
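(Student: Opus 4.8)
The plan is to convert the count $V_n$ into a walk-counting problem on a finite directed graph and solve it with the transfer-matrix method, as in the one-dimensional cases, but with a state that records more information.

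\textbf{Step 1: reduction to acyclic selections.} By uniqueness of the Minkowski decomposition of faces together with the criterion of Proposition~\ref{prop-dag-faces}, a vertex of $Q_n$ is the same datum as a choice, for each simplex $\Delta_{i,j}$ ($0\le i\le 1$, $0\le j\le n-2$), of a single ``winning'' vertex $c_{i,j}$ among its four vertices $\{e_{i,j},e_{i,j+1},e_{i+1,j},e_{i+1,j+1}\}$, subject to the condition that the digraph $D$ on the vertex set $\{e_{a,b}\}_{0\le a\le 2,\,0\le b\le n-1}$ — with arcs from each $c_{i,j}$ to the other three vertices of $\Delta_{i,j}$ (encoding the strict inequalities $w_{c_{i,j}}>w_v$ that a normal vector must satisfy) — is acyclic. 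Thus $V_n$ equals the number of acyclic selections. As a sanity check one sees that for $n=2$ the only forbidden selections are $(c_{0,0},c_{1,0})\in\{(e_{1,0},e_{1,1}),(e_{1,1},e_{1,0})\}$, which create the $2$-cycle $e_{1,0}\leftrightarrow e_{1,1}$, giving $V_2=16-2=14$.

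\textbf{Step 2: the interface is a poset on a single column.} The arcs coming from the two simplices of block $j$ stay inside columns $j$ and $j+1$, and these are moreover the \emph{only} arcs joining column $j$ to column $j+1$. Consequently, a cycle of $D$ is confined to a contiguous range of columns, and any excursion of a cycle into the columns $\le j$ is recorded faithfully by the reachability relation it induces among the three vertices $e_{0,j},e_{1,j},e_{2,j}$. This yields the key lemma: scanning $j=0,1,\dots,n-1$ and letting $P_j$ denote the partial order on $\{e_{0,j},e_{1,j},e_{2,j}\}$ obtained from reachability in the sub-digraph of blocks $0,\dots,j-1$, the selection is acyclic if and only if at every step the digraph on columns $j,j+1$ formed by the arcs of $P_j$ together with the arcs of block $j$ is acyclic; in that case $P_{j+1}$ is the reachability poset it induces on column $j+1$. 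Hence $V_n$ is the number of length-$(n-1)$ walks in the transition graph on the finite set of posets of a $3$-element set (at most $19$ states, and fewer once we identify states related by the vertical reflection $a\mapsto 2-a$, which is a symmetry of the whole configuration, and discard states unreachable from the antichain $P_0$).

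\textbf{Step 3: transfer matrix and generating function.} One writes down the transfer matrix $M$ explicitly by running, against each state $P$, over the $16$ winner-choices for block $j$, discarding those that create a directed cycle in the augmented digraph and recording the resulting state $P'$. Then $\sum_{n\ge1}V_n x^n = x\,\mathbf e_{\mathrm{antichain}}^\top (I-xM)^{-1}\mathbf 1$, a rational function with denominator dividing $\det(I-xM)$; simplifying (equivalently, extracting the minimal linear recurrence satisfied by $(V_n)$) gives the degree-$4$ denominator $1-13x+31x^2-20x^3+4x^4 = (1-2x)(1-11x+9x^2-2x^3)$ and numerator $x+x^2-x^3$, which can be confirmed against the first few values $V_2=14,\ V_3=150,\dots$ computed directly from $M$.

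\textbf{Main obstacle.} The bulk of the work — and the main source of possible error — is the case analysis in Step 3: enumerating the transitions of $M$ correctly, in particular checking acyclicity of the augmented digraph (cycles may ``dip left'' through the arcs of $P_j$, not just through block $j$), and then verifying that the large transfer matrix collapses, after symmetry reduction and removal of transient states, to an order-$4$ recurrence with exactly the stated rational generating function. A secondary point requiring care is the proof of the interface lemma in Step 2, i.e. that $P_j$ is a complete invariant for extendability to an acyclic selection on the remaining columns.
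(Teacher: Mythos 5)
Your approach is sound and genuinely different from the paper's. Both arguments start from the same reduction (an acyclic-selection criterion via Proposition~\ref{prop-dag-faces}, and your $n=2$ sanity check is correct), but the paper's transfer state is the vertex chosen in the last column-block, giving a $14\times 14$ matrix $A$ that is \emph{not} Markovian: cycles can span arbitrarily many column-blocks, so $\overline{V_{n+1}}\neq A\overline{V_n}$ in general. The bulk of the paper's work is therefore a structural classification of the offending long cycles (Lemma~\ref{lemma.minimal.cycles}: only two ``snake'' orientations occur) and an explicit correction term (Proposition~\ref{proposition.comparing.V.and.W}) showing the discrepancy affects only two of the fourteen states and telescopes, after which symmetry collapses everything to a $6\times 6$ matrix. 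Your state --- the reachability poset on the three vertices of the current column --- is by contrast a complete invariant, so your recursion is exactly Markovian and no correction lemma is needed; your interface lemma is correct and is proved by the standard segment-replacement argument (every maximal excursion of a path or cycle into blocks $0,\dots,j-1$ enters and exits through column $j$, hence is faithfully replaced by a relation of $P_j$, and a cycle spanning columns $[a,b]$ is detected at step $b-1$). What the paper buys is a small, hand-checkable state space at the cost of the cycle-classification lemma; what you buy is the elimination of that lemma at the cost of a larger and more delicate transition computation. The one substantive caveat is that your Step~3 is a description of a finite verification rather than the verification itself: the transfer matrix $M$ is never exhibited, and the claim that its minimal recurrence is $V_{n+4}=13V_{n+3}-31V_{n+2}+20V_{n+1}-4V_n$ with numerator $x+x^2-x^3$ is asserted by appeal to matching initial values. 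Since the entire content of the theorem beyond the setup is precisely this computation, your argument is complete only once $M$ is written down (or computed by machine) and the characteristic polynomial and initial vector are checked; as you note yourself, this is where errors would hide, particularly in cycles that re-enter column $j$ through the arcs of $P_j$.
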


The case of a $2\times n$ input with $2 \times 2$ pooling windows is covered by the one-dimensional analysis from Section~\ref{sec:enumeration} and is discussed in Example~\ref{example.2D.2xn}.

%%%%%%%%%%%%%%%%%%%%%%%%%%%%%%%%%%%%%%%%%%

\subsection{Article outline}
In Section~\ref{sec:background} we give background on transfer matrices, generating functions, and generalized permutohedra. 
In Section~\ref{sec:acyclic.graph} we relate the faces of Minkowski sums with directed acyclic graphs. In Section~\ref{sec:enumeration} we study the one-dimensional case and obtain generating functions and recurrences for the number of vertices as the number of windows increase, depending on the window size and stride. 
In Section~\ref{sec:2dim} we consider the two-dimensional case and a particular case in detail. 
We offer a few final remarks in Section~\ref{sec:conclusions}.  Code for the calculations in this article is available at \cite{code}.

\section{Background and notation}
\label{sec:background}

In this section, we describe the main objects we will study and the main technical tools used in our work.

%%%%%%%%%%%%%%%%%%%%%%%%%%%
%%%%%%%%%%%%%%%%%%%%%%%%%%%
\subsection{Polytopes and generalized permutohedra} \label{subssec: gen perm}

Let $P\subset \R^d$ be a polytope.
Given a linear functional $w: \R^d \rightarrow \R$, denote the $w$-maximal face of $P$ by $P_w = \{x \in P \, : \, w(x) = \max_{y \in P} w(y)\}$. 
The (outer) normal fan $\NN(P)$ of $P$ is the complete fan in $\R^d$ whose cones are
\[
\NN(P)_F := \{w \in \R^d \, : \, P_w \supseteq F\}
\]
for each nonempty face  $F$ of $P$. 
The relative interior $\relint(\sigma)$ of a cone $\sigma$ is the interior of $\sigma$ inside its affine span. In particular, 
\[
\relint(\NN(P)_F) := \{w \in \R^d \, : \, P_w =F\}.
\]
The face poset of $\NN(P)$ is isomorphic to the reverse of the face poset of $P$.
 
Throughout we denote the set $\{1,2,\ldots,d\}$ as $[d]$. The \emph{permutohedron} is the polytope 
\[
\Pi_d = \conv{x\in\R^d\ | \ \text{$x_1,\ldots,x_d$ is a permutation of $[d]$} } \subseteq \left\{x\in\R^d \ \left| \  x_1+\cdots+x_d=\binom{d+1}{2}\right.\right\} \subseteq\mathbb{R}^d
.\]
Since $\Pi_d$ is $(d-1)$-dimensional, it is common to take its normal fan in $\R^d/\R(1,\ldots,1)$ rather than in $\R^d$ and we do so in this paper.
Thus, the normal fan of $\Pi_d$ is the complete simplicial fan in $\R^d/\R(1,\ldots,1)$ with a cone for each ordered partition of $[d]$, as follows.
Given an ordered partition $\mathcal{S}=(S_1,\ldots, S_k)$ of $[d]$, the associated cone is cut out by 
    \begin{equation}\label{eq-cones-perm}
    \begin{cases}
    x_a=x_b, & a,b\in S_i  \text{ for some } i, \\
    x_a\ge x_b, & a\in S_i,\ b\in S_j  \text{ for some } i<j . 
    \end{cases}
    \end{equation}
    
A {\em generalized permutohedron} is a polytope $P$ whose normal fan  $\mathcal{N}(P)$ in $\R^d/\R(1,\ldots,1)$ is a coarsening of the normal fan of $\Pi_d$. See  \cite{10.1093/imrn/rnn153} and \cite{PRW} for more details on generalized permutohedra.

As shown in \cite[Prop. 6.3]{10.1093/imrn/rnn153}, generalized permutohedra include Minkowski sums of standard simplices $\sum_j \operatorname{conv}\{e_i \colon i\in \lambda_j\}$, where $\sum_j$ denotes Minkowski sum and $\operatorname{conv}\{e_i \colon i\in \lambda_j\}$ denotes the simplex with vertices $e_i$ for $i\in \lambda_j$.

\subsection{Max-pooling and vertices of Minkowski sums} 
\label{sec:newton-polytope}

The max-pooling functions that we consider in this paper are defined as follows. The input data has the format of a box $\Lambda$ and the pooling windows $\lambda_j$ are shifts of a smaller box across positions inside of $\Lambda$. 

\begin{definition} 
\label{def:SettingProblem}
Let $\Lambda = \{0,1,\ldots, K_1-1\}\times \cdots\times \{ 0,1,\ldots, K_\nu-1
\}$ and 
$\lambda = \{0,\ldots, k_1-1\}\times\cdots\times\{0,\ldots,k_\nu-1\}$ for some $\nu\in\mathbb{Z}^{+}$ and $K_l,k_l\in\mathbb{Z}^{+}$ with $K_l\geq k_l$. 
Further, let $s\in\mathbb{Z}^{+}$ and $\lambda_r  = \lambda + s r$ for any $r$ in $\Lambda' = \{ r \in \Lambda \colon \lambda + s r  \subseteq \Lambda\}$. 
A max-pooling layer with inputs of format $\Lambda$, \emph{pooling windows} or \emph{receptive fields} of format $\lambda$, and \emph{stride} $s$ is a function $f\colon \mathbb{R}^\Lambda \to \mathbb{R}^{\Lambda'};\; (x_i)_{i \in \Lambda} \mapsto (\max\{x_i\colon i\in \lambda_r\})_{r\in \Lambda'}$. See Figure~\ref{fig: 1d and 2d layers}.
\end{definition}

The connection between max-pooling functions 
and polytopes is described by the following result, which is closely related to well-known results within tropical geometry (see \cite[Thm.\ 1.13]{JoswigBook}) and their discussion in the context of neural networks \cite{pmlr-v80-zhang18i,montufar2021sharp}. 
We recall that a convex piecewise-linear function $f\colon \mathbb{R}^{\Lambda}\longrightarrow \mathbb{R}^{\Lambda'}$ defines a polyhedral complex within its domain by considering its linearity regions together with their intersections. 
The following result relates such polyhedral complex with the faces of a suitable polytope for the particular case of max-pooling layers. The result follows from \cite[Thm.\ 1.13]{JoswigBook} and the discussion of Newton polytopes of max-out networks in \cite[Sec.\ 2.4]{montufar2021sharp}. 

\begin{proposition}     \label{connection.between.linearity.regions.and.face.counting}     
Consider the max-pooling function $f\colon \mathbb{R}^{\Lambda} \longrightarrow \mathbb{R}^{\Lambda'}$, $(f(x))_j=\max\{x_i\colon i\in\lambda_j\}$. Then, there is an inclusion-reversing bijection between the faces of the polyhedral complex of $f$ and the cones in the normal fan of the polytope $\sum_j \operatorname{conv}\{e_i \colon i\in \lambda_j\}$.  
\end{proposition} 

%%Here $\Sigma_j$ denotes the Minkowski sum and note that $\operatorname{conv}\{e_i \colon i \in \lambda_j\}$ is a simplex. 
It follows that the number of linearity regions of $f$ is equal to the number of vertices of the polytope $\sum_j \operatorname{conv}\{e_i \colon i\in \lambda_j\}$. 

\begin{remark} \label{rem: complexity faces}
Counting faces of Minkowski sums of simplices is a computationally hard problem. Indeed, when the simplices are line segments, i.e.\ $|\lambda_i|=2$, the corresponding polytope is called a {\em zonotope}, and the number of vertices correspond to counting {\em acyclic orientations} in the graph with edges given by the pairs $\lambda_i$ (see \cite[\S 8.6]{10.1093/imrn/rnn153}). 
Counting acyclic orientations of  graphs is {\em $\#P$-complete} \cite{vertigan_welsh_1992}. 
\end{remark}

\subsection{Transfer matrix method} \label{sec: transfer matrix}

We review the main tools for computing
generating functions and collect a few standard results (see, e.g.\ \cite[Sec.\ 4.7]{EC1}) 
that we will use to describe the number of vertices of some polytopes in Section~\ref{sec:enumeration} and Section~\ref{sec:2dim}. 

Given a directed graph $D=(V,E)$ with $V=\{v_1,\ldots,v_p\}$ and edge weights $w:E\to \mathbb{Q}$, a length-$n$ walk $\Gamma$ in $D$ is a sequence $s_1\cdots s_n$ of $n$ directed edges $s_i$ in $E$ respecting edge directions, i.e. the target of $s_i$ equals the source of $s_{i+1}$ for all $i =1, \ldots,n-1$. The {\em weight} of a walk $\Gamma$ is $w(\Gamma)=w(s_1)\cdots w(s_n)$.

Let $A_{i,j}(n):=\sum_{\Gamma} w(\Gamma)$ where the sum is over all length $n$ walks $\Gamma$ in $D$ from $v_i$ to $v_j$. In the case that all $w(e)=1$ then $A_{ij}(n)$ is just the number of length-$n$ walks in $D$ from $v_i$ to $v_j$. Let $A$ be the $p\times p$ matrix with $(i,j)$-th entry 
$A_{i,j} :=
A_{i,j}(1)=\sum_{e} w(e)$, where the sum is over all edges $e$ from $v_i$ to $v_j$. This is the \emph{adjacency matrix} of $D$. 
The following standard result relates $A_{i,j}(n)$ to entries of powers of matrix $A$.

\begin{theorem}[{e.g.~\cite[Thm.\ 4.7.1]{EC1}}]
\label{thm:An}
Let $D$ be a digraph as above with adjacency matrix $A$ and $n \in \mathbb{N}$. Then $A_{i,j}(n)$ equals the $(i,j)$th entry of $A^n$. 
\end{theorem}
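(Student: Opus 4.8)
The plan is to prove the statement by induction on $n$, using two elementary facts: walks concatenate along a common endpoint, and the weight of a walk is the product of the weights of its edges.

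For the base case $n=1$, a length-$1$ walk from $v_i$ to $v_j$ is precisely a single directed edge $e$ from $v_i$ to $v_j$, with weight $w(e)$. Hence $A_{i,j}(1)=\sum_{e} w(e)$, the sum over all edges from $v_i$ to $v_j$, which is exactly the $(i,j)$ entry of $A=A^1$ by definition of the adjacency matrix.

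For the inductive step, fix $n\geq 2$ and assume $A_{i,k}(n-1)$ equals the $(i,k)$ entry of $A^{n-1}$ for all $i,k$. The key observation is that deleting the last edge gives a weight-preserving bijection: every length-$n$ walk $\Gamma=s_1\cdots s_n$ from $v_i$ to $v_j$ factors uniquely as a length-$(n-1)$ walk $\Gamma'=s_1\cdots s_{n-1}$ from $v_i$ to the source $v_k$ of $s_n$, followed by the edge $s_n$ from $v_k$ to $v_j$; conversely, any such pair concatenates to a length-$n$ walk from $v_i$ to $v_j$. Since $w(\Gamma)=w(\Gamma')\,w(s_n)$, grouping the walks according to the intermediate vertex $v_k$ yields
\[
A_{i,j}(n)=\sum_{k=1}^{p}\Big(\sum_{\Gamma'} w(\Gamma')\Big)\Big(\sum_{s_n} w(s_n)\Big)=\sum_{k=1}^{p} A_{i,k}(n-1)\,A_{k,j},
\]
where the inner sums range over length-$(n-1)$ walks $\Gamma'$ from $v_i$ to $v_k$ and over edges $s_n$ from $v_k$ to $v_j$, respectively. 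By the inductive hypothesis the right-hand side equals $\sum_{k=1}^p (A^{n-1})_{i,k}(A)_{k,j}$, which is the $(i,j)$ entry of the product $A^{n-1}A=A^n$. This completes the induction.

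The argument is essentially bookkeeping, and I do not expect a real obstacle. The only point deserving care is verifying that the "delete the last edge" map really is a bijection between length-$n$ walks $v_i\to v_j$ and pairs $(\Gamma',s_n)$ with $\Gamma'$ a length-$(n-1)$ walk $v_i\to v_k$ and $s_n$ an edge $v_k\to v_j$, taken over all $k$, and that it multiplies weights correctly; once this is in hand the displayed identity is just a regrouping of a finite sum. One could symmetrically induct by removing the first edge instead, which would express $A_{i,j}(n)$ via $A\cdot A^{n-1}$; the two factorizations of $A^n$ agree, so the choice is immaterial.
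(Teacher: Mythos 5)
Your proof is correct and is the standard inductive argument (peel off the last edge, regroup the resulting sum by the intermediate vertex, and recognize the matrix product); the paper does not prove this statement itself but cites it as \cite[Thm.\ 4.7.1]{EC1}, where essentially the same argument appears. The only cosmetic point is that if $\mathbb{N}$ is taken to include $0$ one should note the trivial case $A_{i,j}(0)=\delta_{ij}=(A^0)_{i,j}$ via the empty walk, which is the convention the paper uses in its generating functions.
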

 
Next, we recall that given a sequence of numbers 
$\{b_n\}_{n=0}^{\infty }$, its generating function is given by $\sum_{n \geq 0}b_{n}x^n$ (sometimes we shift the index for notational convenience). 
In our particular case, we use 
the transfer-matrix method  to evaluate the generating function for the number $A_{i,j}(n)$ of walks on a digraph.

\begin{theorem}[{e.g.~\cite[Thm.\ 4.7.2]{EC1}}] \label{thm: transfer matrix}
Let $D$ be a digraph as above. Fix $i,j$, $1\leq i,j \leq p$, and let $F_{i,j}(D,x) = \sum_{n\geq 0} A_{i,j}(n) x^n$ be the generating function for walks in $D$ from $v_i$ to $v_j$. Then 
\[
F_{i,j}(D,x) = \frac{(-1)^{i+j}\det(I-xA %:
; j,i)}{\det(I - x A)},
\]
where $(B; j,i)$ denotes the matrix $B$ with the $j$th row and $i$th column removed. 
\end{theorem}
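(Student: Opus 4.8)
The plan is to reduce the whole statement to a single matrix identity over the ring of formal power series $\mathbb{Q}[[x]]$ and then apply Cramer's rule. First I would invoke Theorem~\ref{thm:An}, which gives $A_{i,j}(n) = (A^n)_{i,j}$ for every $n \geq 0$; for $n=0$ this reads $A_{i,j}(0) = I_{i,j} = \delta_{i,j}$, consistent with there being a unique empty walk from $v_i$ to itself and none between distinct vertices. Summing over $n$ and pulling the $(i,j)$-entry out of a matrix of power series,
\[
F_{i,j}(D,x) \;=\; \sum_{n\geq 0} (A^n)_{i,j}\, x^n \;=\; \Bigl(\sum_{n\geq 0} x^n A^n\Bigr)_{\!i,j}.
\]

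Next I would identify $\sum_{n\geq 0} x^n A^n$ with $(I - xA)^{-1}$ inside $M_p(\mathbb{Q}[[x]])$, the ring of $p\times p$ matrices over $\mathbb{Q}[[x]]$. This is purely formal: the telescoping computation $(I - xA)\bigl(\sum_{n\geq 0} x^n A^n\bigr) = \sum_{n\geq 0} x^n A^n - \sum_{n\geq 0} x^{n+1} A^{n+1} = I$, and the symmetric one on the other side, show that the displayed sum is a genuine two-sided inverse of $I - xA$. (Equivalently, $\det(I-xA)\in\mathbb{Q}[[x]]$ has constant term $1$, hence is a unit, so $I-xA$ is invertible over $\mathbb{Q}[[x]]$ and its inverse is unique.) Therefore $F_{i,j}(D,x) = \bigl((I-xA)^{-1}\bigr)_{i,j}$.

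Finally I would apply the adjugate formula $B^{-1} = \det(B)^{-1}\,\adj(B)$, valid over any commutative ring when $\det(B)$ is a unit, to $B = I - xA$. By definition $\adj(B) = C^{\mathsf{T}}$, where $C$ is the cofactor matrix $C_{k,\ell} = (-1)^{k+\ell}\det(B; k,\ell)$ and $(B;k,\ell)$ is $B$ with row $k$ and column $\ell$ deleted; hence $\bigl(\adj(B)\bigr)_{i,j} = C_{j,i} = (-1)^{i+j}\det(B; j,i)$. Substituting $B = I - xA$ yields
\[
F_{i,j}(D,x) \;=\; \frac{(-1)^{i+j}\det(I - xA; j,i)}{\det(I - xA)},
\]
as claimed.

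The argument is essentially bookkeeping, and the only two points that require care are: (i) justifying the geometric-series inversion at the level of formal power series rather than as an analytic identity — the telescoping/unit-determinant remark is exactly what makes this rigorous — and (ii) keeping the transpose in the definition of $\adj$ straight, which is precisely why the minor appearing in the numerator is $\det(I-xA; j,i)$ (delete row $j$, column $i$) and not $\det(I-xA; i,j)$. I do not anticipate any substantive obstacle beyond these conventions.
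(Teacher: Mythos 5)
Your proposal is correct, and it is essentially the standard argument for this cited result (the paper itself does not prove it, quoting Stanley instead): Theorem~\ref{thm:An} plus the formal geometric-series inversion of $I-xA$ over $\mathbb{Q}[[x]]$ and the adjugate formula is exactly the chain of identities the paper later records in \eqref{eq: def matrix series}. No gaps; both of your flagged care points (formal-power-series inversion and the transpose in $\adj$) are handled correctly.
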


\begin{remark} \label{rem: rel to charpoly and deg of denom}
Note that the polynomial $Q(x)=\det(I-xA)$ in the denominator above is related to the characteristic polynomial $P(x)=\det(xI-A)$ of $A$ by $Q(x)=x^p P(1/x)$. Thus, the degree of $Q(x)$  is $p-m_0$ where $m_0$ is the multiplicity of the eigenvalue $0$ in $A$.
\end{remark}

\begin{example} \label{ex: k=3 and s=1}
Consider the directed graph in Figure~\ref{fig: example digraph}. The walks in $D$ of length $n$ correspond to words $w_1\cdots w_{n+1}$ where $w_i \in \{0,1,2\}$ (the vertices of $D$), and there is  no appearance of $11$ nor $20$. 
By Theorem~\ref{thm:An} the number $a_{n+1}$ of such words of size $n+1$  is equal to $\sum_{i,j=1}^3 A_{i,j}(n)$ and by Theorem~\ref{thm: transfer matrix} we have that 
\[
\sum_{n\geq 0} a_{n+1} x^n = \frac{\sum_{i,j=1}^3 (-1)^{i+j} Q_{i,j}(x)}{Q(x)},
\]
where $Q(x)=\det(I - x A)$ and $Q_{i,j}(x)$ is the determinant of the submatrix $(Q(x);j,i)$. 
By direct calculation (this is the same as \cite[Ex.\ 4.7.6]{EC1}) we find 
\begin{equation} \label{eq: gs ex k=3 s=1}
\sum_{n\geq 0} a_{n+1} x^n = 3+7x+16x^2+36x^3+81x^4+\cdots = \frac{3+x-x^2}{1-2x-x^2+x^3}. 
\end{equation}

\begin{figure}
    \centering
       \raisebox{-30pt}{ \includegraphics[scale=0.8]{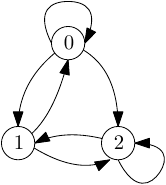}}
       \qquad     
       $A = \begin{bmatrix}
 1 & 1 & 1 \\
 1 & \textcolor{blue}{0} & 1 \\
 \textcolor{red}{0} & 1 & 1 
 \end{bmatrix}$
    \caption{A directed graph $D$ and its adjacency matrix $A$ from Example~\ref{ex: k=3 and s=1}.}
    \label{fig: example digraph}
\end{figure}

\end{example}

\subsection{Generating functions and  asymptotics}
The following results are the main tool to find the explicit form of the generating functions coming from Theorem \ref{thm: transfer matrix}. 
\begin{theorem}[{\cite[Thm.\ 4.1.1]{EC1}}]
\label{thm:GeneratingFunction}
Let $c_1,\ldots, c_d$ be a sequence of complex numbers, $c_d\neq 0$. The following are equivalent for a function $f:\mathbb{N}\to \mathbb{C}$.
\begin{itemize}
    \item[(i)]
    The formal power series of function $f(n)$ has the closed  form
    \[
    \sum_{n\geq 0} f(n)x^n = \frac{P(x)}{Q(x)}, 
    \]
    where $Q(x)=1+c_1x+\cdots + c_dx^d$ and $P(x)$ is a polynomial of degree less than $d$. 
    \item[(ii)] For all $n\geq 0$, $f(n+d)+ c_1f(n+d-1)+\cdots + c_d f(n) =0$. 
    \item[(iii)] For all $n\geq 0$, 
    \[
    f(n) = \sum_{i=1}^k P_i(n)\gamma_i^n,
    \]
    where $Q(x)=\prod_{i=1}^k (1-\gamma_ix)^{d_i}$, the $\gamma_i$ are distinct and nonzero, 
    and $P_i(n)$ is a polynomial of degree less than $d_i$. 
    \end{itemize}
\end{theorem}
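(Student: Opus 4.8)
The statement is the classical characterization of linearly recurrent (``$C$-finite'') sequences, and the plan is to follow the standard route (cf.\ \cite{EC1}): prove the two equivalences (i)$\Leftrightarrow$(ii) and (i)$\Leftrightarrow$(iii) separately, since each is transparent on its own, and then chain them. Throughout, the only ambient tool is formal power-series arithmetic in $\mathbb{C}[[x]]$ together with the partial-fraction algorithm.

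First I would handle (i)$\Leftrightarrow$(ii). Set $F(x)=\sum_{n\ge 0}f(n)x^n\in\mathbb{C}[[x]]$. Since $Q(0)=1$, the series $Q(x)$ is a unit in $\mathbb{C}[[x]]$, so (i) holds if and only if the formal product $Q(x)F(x)$ is a polynomial of degree less than $d$. Expanding the product, the coefficient of $x^m$ for $m\ge d$ is $f(m)+c_1f(m-1)+\cdots+c_df(m-d)$, so $Q(x)F(x)$ has degree $<d$ exactly when this expression vanishes for every $m\ge d$, which after substituting $n=m-d$ is precisely condition (ii); when it holds one takes $P(x):=Q(x)F(x)$, automatically of degree $<d$.

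For (i)$\Leftrightarrow$(iii), note that $Q(0)=1\ne 0$ forces every root of $Q$ to be nonzero, so over $\mathbb{C}$ we may write $Q(x)=\prod_{i=1}^{k}(1-\gamma_i x)^{d_i}$ with the $\gamma_i$ distinct and nonzero and $\sum_i d_i=d$ (using $c_d\ne 0$). Given (i), a partial-fraction decomposition over $\mathbb{C}$ — legitimate because $\deg P<\deg Q$ — gives
\[
\frac{P(x)}{Q(x)}=\sum_{i=1}^{k}\sum_{j=1}^{d_i}\frac{a_{ij}}{(1-\gamma_i x)^{j}}.
\]
Combining this with the expansion $(1-\gamma x)^{-j}=\sum_{n\ge 0}\binom{n+j-1}{j-1}\gamma^n x^n$, and using that $\binom{n+j-1}{j-1}$ is a polynomial in $n$ of degree $j-1$, I would collect the terms sharing a given $\gamma_i$ to obtain $f(n)=\sum_{i=1}^{k}P_i(n)\gamma_i^n$ with $\deg P_i\le d_i-1$, i.e.\ (iii). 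Conversely, since $\binom{n+j-1}{j-1}$ for $j=1,\dots,d_i$ is a basis of the space of polynomials of degree $<d_i$, each summand $P_i(n)\gamma_i^n$ has generating function $R_i(x)/(1-\gamma_i x)^{d_i}$ with $\deg R_i<d_i$; summing over $i$ over the common denominator $Q(x)$ produces a numerator of degree at most $\max_i\bigl((d_i-1)+\sum_{l\ne i}d_l\bigr)=d-1$, which gives (i).

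I do not expect a substantial obstacle: the argument is purely formal. The only point that needs care is the degree bookkeeping in the direction (iii)$\Rightarrow$(i) — verifying that after clearing denominators the numerator still has degree strictly less than $d$, and that recording the exact factorization of $Q$ (rather than merely a divisor) is what guarantees the $\gamma_i$ appearing in (iii) are precisely those dictated by $Q$ — together with the small linear-algebra fact, used for that converse, that a polynomial of degree $<d_i$ is exactly a $\mathbb{C}$-linear combination of $\binom{n+j-1}{j-1}$ for $1\le j\le d_i$.
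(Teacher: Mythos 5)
Your proof is correct; the paper itself gives no argument for this statement, citing it directly to \cite[Thm.\ 4.1.1]{EC1}, and your two equivalences (comparing coefficients of $Q(x)F(x)$ for (i)$\Leftrightarrow$(ii), and partial fractions with the binomial basis $\binom{n+j-1}{j-1}$ for (i)$\Leftrightarrow$(iii)) are exactly the standard argument from that reference. The degree bookkeeping in the direction (iii)$\Rightarrow$(i) is handled correctly, so there is nothing to add.
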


For the asymptotic behavior of the number of vertices in Section \ref{sec:asymptotics}, we use the next standard result that follows from the result above. See \cite[Thm.\ 7.10]{B} and  \cite[Sec.\  V.5]{FS_book} for more details. 

\begin{theorem}[{see, e.g.\ \cite[Thm.\ 7.10]{B}}]  \label{thm:asymptotics}
Let $f(x) = \sum_{n \geq 0} s_n x^n =  P(x)/Q(x)$ be a rational function with $Q(0)\neq 0$, %let us 
assume that $P(x)$ and $Q(x)$ do not have roots in common, and $Q(x)$ has a unique root $r_1$ of smallest modulus. 
Then the exponential growth rate of $s_n$ is $|z_1|$, where $z_1 = 1/r_1$. 
\end{theorem}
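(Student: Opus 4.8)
The plan is to combine the structure theorem for rational generating functions (Theorem~\ref{thm:GeneratingFunction}) with elementary coefficient asymptotics, which is the classical argument of analytic combinatorics.

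First I would normalize and reduce. Dividing $P$ and $Q$ by $Q(0)$ we may assume $Q(0)=1$, and by polynomial division $f(x)=R(x)+\tilde P(x)/Q(x)$ with $R$ a polynomial and $\deg\tilde P<\deg Q$; since $R$ affects only finitely many coefficients $s_n$, it does not change the exponential growth rate $\limsup_n|s_n|^{1/n}$. Factor $Q(x)=\prod_{i=1}^k(1-z_ix)^{m_i}$ where $z_i=1/r_i$ and $r_1,\ldots,r_k$ are the distinct roots of $Q$ with multiplicities $m_i$. Then Theorem~\ref{thm:GeneratingFunction}, applied to $\tilde P/Q$, gives $s_n=\sum_{i=1}^k p_i(n)\,z_i^n$ for all large $n$, with the $z_i$ distinct and nonzero and $\deg p_i<m_i$. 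The hypothesis that $r_1$ is the \emph{unique} root of smallest modulus is exactly $|z_1|>|z_i|$ for all $i\neq 1$.

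The one extra ingredient I would supply is that $p_1$ is not the zero polynomial. In the partial fraction decomposition
\[
\frac{\tilde P(x)}{Q(x)}=\sum_{i=1}^k\sum_{j=1}^{m_i}\frac{a_{i,j}}{(1-z_ix)^j},
\]
the leading coefficient $a_{1,m_1}$ equals $\lim_{x\to r_1}(1-x/r_1)^{m_1}\tilde P(x)/Q(x)$, which is nonzero precisely because $P$ and $Q$ have no common root, so $r_1$ is a pole of $f$ of exact order $m_1$ with no cancellation. Extracting coefficients via $[x^n](1-z_ix)^{-j}=\binom{n+j-1}{j-1}z_i^n$ shows that $p_1$ has degree exactly $m_1-1$ with nonzero leading coefficient $a_{1,m_1}/(m_1-1)!$; hence $p_1(n)\neq 0$ for all large $n$.

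Finally I would run the $n$-th root argument. Set $\rho:=\max_{i\neq 1}|z_i|<|z_1|$; the tail obeys $\bigl|\sum_{i\neq 1}p_i(n)z_i^n\bigr|\le C\,n^{D}\rho^n$ for suitable constants $C,D$, so
\[
\frac{s_n}{p_1(n)\,z_1^n}=1+O\bigl(n^{D}(\rho/|z_1|)^n\bigr)\to 1\quad(n\to\infty).
\]
Taking $n$-th roots of absolute values and using $|p_1(n)|^{1/n}\to 1$ for any nonzero polynomial, together with $(1+o(1))^{1/n}\to 1$, yields $|s_n|^{1/n}\to|z_1|$, so the exponential growth rate is $|z_1|$. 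The only genuine obstacle is the non-vanishing of $p_1$: this is where the no-common-root hypothesis is indispensable, since a root of $Q$ cancelled by $P$ would delete the corresponding exponential term from $s_n$ and invalidate the conclusion; the harmless fact that $p_1(n)$ may vanish for small $n$ when $m_1>1$ is absorbed into ``for all large $n$''.
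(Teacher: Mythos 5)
The paper does not actually prove this statement: it is imported verbatim from B\'ona's textbook \cite{B} (and \cite[Sec.~V.5]{FS_book}) and used as a black box, so there is no in-paper argument to compare against. Your proposal is a correct, self-contained proof along the standard lines: reduce to a proper rational function with $Q(0)=1$, invoke the structure theorem (Theorem~\ref{thm:GeneratingFunction}) to write $s_n=\sum_i p_i(n)z_i^n$, and isolate the dominant exponential. You correctly identify the one point where a naive application of Theorem~\ref{thm:GeneratingFunction} would be incomplete --- the possibility that $p_1\equiv 0$ --- and close it properly: the leading partial-fraction coefficient $a_{1,m_1}=\lim_{x\to r_1}(1-x/r_1)^{m_1}\tilde P(x)/Q(x)$ is nonzero exactly because $\tilde P(r_1)=P(r_1)-R(r_1)Q(r_1)=P(r_1)\neq 0$ by the no-common-root hypothesis, and the binomial expansion of $(1-z_1x)^{-m_1}$ then forces $\deg p_1=m_1-1$. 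The final $n$-th root estimate is routine and in fact delivers slightly more than the statement asks for, namely that $\lim_n|s_n|^{1/n}$ exists and equals $|z_1|$ rather than merely being the $\limsup$. In short: the proposal is sound and fills in a proof where the paper offers only a citation.
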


We recall that given an analytic function $F(x)$, a {\em dominant singularity} is a singularity of minimum modulus. For generating functions coming from the transfer-matrix method (Theorem~\ref{thm: transfer matrix}), the denominator is of the form $\det(I-xA)$, thus the dominant singularity is $1/\lambda$ where $\lambda$ is the eigenvalue of maximum modulus of the associated adjacency matrix $A$.

Given a $p\times p$ nonnegative matrix $A$, its {\em dependence graph} is the directed graph with vertices $\{1,2,\ldots,p\}$ and edges $i\to j$ if $A_{ij}\neq 0$. A square nonnegative matrix $A$ is said to be {\em irreducible} if its dependence graph is strongly connected, i.e. each vertex has a directed path to every other vertex.

 The generating functions from the transfer-matrix method (see Theorem~\ref{thm: transfer matrix}) are of the following form. Let $A$ be $p\times p$  nonnegative matrix (e.g. the adjacency matrix of a directed graph $D$ with positive edge weights) which is also irreducible.  For $1\leq i,j\leq p$, let
\begin{equation} \label{eq: def matrix series}
F^{\langle i,j\rangle}(x) := \sum_{n\geq 0} (A^n)_{i,j} x^n  \,=\,\left( (I-xA)^{-1}\right)_{i,j}   \,=\,  \frac{(-1)^{i+j}\det(I-xA %:
; j,i)}{\det(I - x A)},
\end{equation}
where $(B; j,i)$ denotes the matrix $B$ with the $j$th row and $i$th column removed. For more details on matrices over formal power series, like $(I-xA)^{-1}$, see \cite[Sec.\ 1.1.10]{CE}. The following result is a consequence of the famous Perron--Frobenius theorem and guarantees a unique and simple dominant singularity of the generating functions $F^{\langle i,j\rangle}(x)$. In particular, this determines the asymptotics of the coefficients of the generating functions from the transfer-matrix method.

\begin{theorem}[{\cite[Thm.\ V.7]{FS_book}}] \label{thm: main asymptotic theorem}
Let $A$ be a square nonnegative irreducible matrix and let $F^{\langle i,j\rangle}(x)$ be defined as in \eqref{eq: def matrix series}. Then all entries $F^{\langle i,j\rangle}(x)$ have the same radius of convergence $\rho=\lambda^{-1}_1$, where $\lambda_1$ is the largest positive eigenvalue of $A$ (equivalently, $1/\lambda_1$ is the smallest positive root of $\det(I-xA)$). 
Moreover, if 
$F^{\langle i,j \rangle}(x)=:\sum_{n\geq 0} c_nx^n$, 
then 
\[
\lim_{n\to \infty} \frac{1}{n} \ln\left(
c_n
\right) = \ln \lambda_1.
\]
\end{theorem}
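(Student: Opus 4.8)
The plan is to derive the statement from the Perron--Frobenius theorem together with elementary estimates on the rational functions $F^{\langle i,j\rangle}(x)$. First I would record what Perron--Frobenius gives for a nonnegative irreducible matrix $A$: the spectral radius $\lambda_1=\rho(A)$ is a positive real eigenvalue, it is algebraically simple, every eigenvalue $\mu$ of $A$ satisfies $|\mu|\le\lambda_1$, and there exist strictly positive right and left eigenvectors $v$ and $u$ with $Av=\lambda_1 v$ and $u^{\mathsf T}A=\lambda_1 u^{\mathsf T}$. From the factorization $\det(I-xA)=\prod_{k}(1-\lambda_k x)$ over the eigenvalues $\lambda_k$ of $A$ (with multiplicity) it follows immediately that the roots of $\det(I-xA)$ are the numbers $1/\lambda_k$ with $\lambda_k\neq 0$, and that $1/\lambda_1$ is the smallest positive one, since any smaller positive root $1/\lambda_k$ would force $\lambda_k>\lambda_1$; this yields the parenthetical equivalence in the statement.

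Next I would identify the radius of convergence $\rho_{ij}$ of each entry through the Cauchy--Hadamard formula $\rho_{ij}^{-1}=\limsup_{n}\big((A^n)_{ij}\big)^{1/n}$, by sandwiching $(A^n)_{ij}$. For the upper bound I would invoke Gelfand's formula (or simply the Jordan form of $A$): $\|A^n\|^{1/n}\to\lambda_1$, so $(A^n)_{ij}\le\|A^n\|=O(n^{p}\lambda_1^{n})$, whence $\rho_{ij}\ge 1/\lambda_1$ for all $i,j$. For the matching lower bound I would use the positive right eigenvector: from $\sum_{j}(A^n)_{ij}v_j=\lambda_1^{n}v_i$ with $v>0$ one extracts, for every $n$, an index $j(n)$ with $(A^n)_{i,j(n)}\ge c\,\lambda_1^{n}$ for a fixed constant $c>0$; then irreducibility provides, for each of the finitely many possible values of $j(n)$, a walk of bounded length from $j(n)$ to the prescribed target $j$, so that $(A^{N})_{ij}\ge c'\,\lambda_1^{N}$ for a fixed $c'>0$ and infinitely many exponents $N$. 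Hence $\limsup_{n}\big((A^n)_{ij}\big)^{1/n}\ge\lambda_1$, so $\rho_{ij}=1/\lambda_1$ for every pair $(i,j)$, which is the first assertion of the theorem.

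The exponential growth of the coefficients is then read off from the same two bounds: the upper bound gives $\tfrac1n\ln (A^n)_{ij}\le\ln\lambda_1+o(1)$, and the lower bound gives $\tfrac1n\ln (A^n)_{ij}\ge\ln\lambda_1$ along a subsequence of exponents. When the dependence graph of $A$ is aperiodic (that is, $A$ is primitive), $(A^n)_{ij}>0$ for all large $n$ and the two bounds combine into the genuine limit $\lim_{n}\tfrac1n\ln c_n=\ln\lambda_1$; in the merely periodic case the limit is to be understood along the residue class of exponents on which $c_n$ does not vanish (equivalently, $\lim$ is replaced by $\limsup$). As an alternative to the subsequence argument, one can use the Ces\`{a}ro convergence $\tfrac1n\sum_{m<n}A^{m}/\lambda_1^{m}\to vu^{\mathsf T}/(u^{\mathsf T}v)$, whose $(i,j)$ entry $v_i u_j/(u^{\mathsf T}v)$ is strictly positive, to conclude $\limsup_n (A^n)_{ij}/\lambda_1^n>0$ directly.

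The step I expect to be the crux is the lower bound in the second paragraph: showing that the pole of the resolvent $(I-xA)^{-1}$ at $x=1/\lambda_1$ persists in \emph{every} individual entry $F^{\langle i,j\rangle}(x)$, rather than only in the matrix as a whole. This is precisely where irreducibility (and not merely nonnegativity) is indispensable, since for a reducible $A$ some entries of the resolvent can have strictly larger radius of convergence. A secondary technicality is the possible periodicity of the dependence graph, which is why the ``$\lim$'' in the statement must be interpreted along the appropriate subsequence of exponents.
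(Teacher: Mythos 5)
Your argument is correct, but there is nothing in the paper to compare it against: Theorem~\ref{thm: main asymptotic theorem} is imported verbatim from Flajolet--Sedgewick (\cite[Thm.\ V.7]{FS_book}) and the paper offers no proof, only the citation. What you have written is essentially the standard Perron--Frobenius derivation that underlies the cited result: the upper bound $\limsup_n ((A^n)_{ij})^{1/n}\le\lambda_1$ from Gelfand's formula, and the matching lower bound by extracting from $\sum_j (A^n)_{ij}v_j=\lambda_1^n v_i$ (with $v>0$) an entry of size $\ge c\lambda_1^n$ and then routing it to the prescribed column $j$ by a bounded-length walk supplied by irreducibility. That is exactly the point where irreducibility is used and cannot be dropped, as you say, and your Ces\`aro alternative via $v u^{\mathsf T}/(u^{\mathsf T}v)>0$ is an equally valid way to see the persistence of the pole at $x=1/\lambda_1$ in every entry of the resolvent. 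Your one caveat is also well taken and worth stating explicitly: for a periodic irreducible matrix the coefficients $c_n=(A^n)_{ij}$ vanish along entire residue classes, so the displayed limit only holds along the subsequence where $c_n\neq 0$ (or with $\lim$ read as $\limsup$); this is how the statement must be interpreted in general. It is harmless where the theorem is applied in the paper, since the matrices $A_{k,s}$ of Section~\ref{sec:SetUp} and $B$ of Section~\ref{sec_proof_2D} have nonzero diagonal entries and are therefore primitive, so the genuine limit holds there.
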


\begin{example} \label{ex: asymptotics k=3 and s=1}
Continuing with Example~\ref{ex: k=3 and s=1}, the $3\times 3$ matrix $A$ is irreducible since the dependence graph $D$ is strongly connected. The smallest root of the polynomial $\det(I-xA)=1-2x-x^2+x^3$ is $\rho=1/\lambda_1 \approx 0.44504$. Then by Theorem~\ref{thm: main asymptotic theorem} we have that the coefficient $a_{n+1}$ in \eqref{eq: gs ex k=3 s=1} satisfies
\[
\lim_{n\to \infty} \frac{1}{n} \ln a_{n+1} =  \ln \lambda_1 \approx 0.8096. 
\]

\end{example}

%%%%%%%%%%%%%%%%%%%%%%%%%%%
\section{Faces of Minkowski sums of simplices and directed acyclic graphs} 
\label{sec:acyclic.graph}

Consider a family of nonempty subsets $\lambda_{0},\dots,\lambda_{n-1}\subseteq [d]$ 
and the polytopes
\[
  \Delta_{\lambda_{i}}=\conv{e_{j}:j\in\lambda_{i}}\subseteq\R^{d}\text{ for }i=0,\dots, n-1.
\]
Throughout this section we consider the Minkowski sum $P=\sum_i \Delta_{\lambda_{i}}$. Our goal is to study the Minkowski sum $F=F_{0}+\cdots+F_{n-1}$ of faces $F_{i}\subseteq\Delta_{\lambda_{i}}$. In Proposition~\ref{prop: char vertices minkowski sum of simplices} we give a criterion to determine whether $F$ is a face of $P$, describe its corresponding cone in $\NN(P)$ and, consequently, its dimension. 
Our method consists on constructing a directed graph that is acyclic if and only if $F$ is a face. 

Every face $F\subseteq P$ can be written 
as the Minkowski sum $F= F_0+\cdots +F_{n-1}$ of some faces $F_i$ of $\Delta_{\lambda_{i}}$. 
However, not every choice of faces $F_{i}\subseteq \Delta_{\lambda_{i}}$ adds up to a face of $P$. 
In fact, $F\subseteq P$ is a face of $P$ if and only if $F= F_0+\cdots +F_{n-1}$ for some faces $F_i$ of $\Delta_{\lambda_{i}}$ such that there exists a linear function, independent of $i$, whose set of maximizers over $\Delta_{\lambda_{i}}$ is $F_i$ for all $i$. Furthermore, the decomposition $F = \sum_i F_i$ of any nonempty face $F$ is unique; see \cite[Lem.\ 2.1.4]{Gritzmann-Sturmfels-1993} and \cite[Prop.\ 2.1]{FUKUDA20041261}. 

\subsection{Digraph associated  to the Minkowski sum of faces}\label{sec_faces_dag}

Let $\Pi=(F_0,\ldots,F_{n-1})$ be a list where each $F_i$ is a face of $\Delta_{\lambda_i}$.
We now define a graph $G_\Pi$ that determines whether $\sum_i F_i$ is a face of $P$.
For each $i=0,\dots,n-1$, denote the set of indices of vertices of $F_i$ by
\[
    V(F_i)=\{a\in[d] \mid e_a \text{ is a vertex of } F_i\} \subseteq \lambda_i. 
\] 
First, let $\sim_\Pi$ denote the equivalence relation on $[d]$ obtained as the transitive closure of the relation
\[
  \{(a,b)\mid a,b\in V(F_i) \text{ for some } i\}.
\]
We denote the equivalence class of any $a\in [d]$ by $\bar a\in [d]/\sim_\Pi$.

\begin{definition}
    Let $\Pi$ be a list of faces as above. Then, $G_{\Pi}$ is the digraph with vertex set $V(G_{\Pi})=[d]/\sim_\Pi$, and such that there is an edge $\bar b\to\bar a$ between $\bar a, \bar b\in V(G_{\Pi})$ if and only if $\bar b\cap V(F_i)\neq \varnothing$ and $\bar a \cap ({\lambda_i}\setminus V(F_i))\neq \varnothing $ for some $i=0,\ldots,n-1$. 
    Note that loops $\bar a\to\bar a$ are allowed.
\end{definition}
With this we are ready to introduce the main result of the this section. 

\begin{proposition}\label{prop: char vertices minkowski sum of simplices}
Consider the sum $F=F_0+\cdots+F_{n-1}$ with $F_i$ a face of $\Delta_{\lambda_i}$ and let $\Pi=(F_0,\dots,F_{n-1})$. 
Then, $F$ is a face of $P$ if and only if $G_{\Pi}$ is acyclic. 
Moreover, if $F$ is a face, then $\dim(F)=d-|[d]/\sim_{\Pi}|$. 
\end{proposition}

\begin{figure}
    \centering
    \includegraphics[scale=0.9]{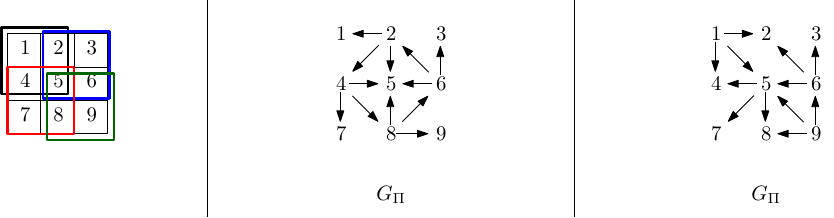}
    \caption{Illustration of Proposition~\ref{prop: char vertices minkowski sum of simplices} on a $3\times 3$ input with a $2\times 2$ pooling layer depicted on the left. 
    On the middle we have  $\Pi=(e_2,e_6,e_8,e_4)$, giving rise to a cycle in $G_\Pi$. On the right $\Pi=(e_1,e_6,e_9,e_5)$ and $G_\Pi$ is acyclic. 
    }
    \label{fig:2by2-example}
\end{figure}

\begin{example}
Figure~\ref{fig:2by2-example} shows two examples in the case where $\lambda_0=\{1,2,4,5\},\, \lambda_1=\{2,3,5,6\},\,\lambda_2=\{5,6,8,9\}$ and $\lambda_3=\{4,5,7,8\}\subseteq[9]$. 
In the example on the center panel, 
we consider the faces with $V(F_0)=\{2\}$, $V(F_1)=\{6\}$, $V(F_2)=\{8\}$, $V(F_3)=\{4\}$. 
The sum of faces $F_0+F_1+F_2+F_3$ is not a face of $P$ because $G_\Pi$ has a cycle. 
In the example on the right panel, 
we consider $V(F_0)=\{1\}$, $V(F_1)=\{6\}$, $V(F_2)=\{9\}$, $V(F_3)=\{5\}$. In this case $G_\Pi$ is acyclic and hence the corresponding sum is a face of $P$. 
\end{example}

\begin{remark}  \label{remark.subgraph.removing.sinks}
For a given list of faces $\Pi$, let $\Gamma_{\Pi}\subseteq G_{\Pi}$ be the subgraph obtained by removing from $G_{\Pi}$ all vertices that are sinks. Then, we remark that $G_{\Pi}$ is acyclic if and only if $\Gamma_{\Pi}$ is acyclic. This subgraph will be used later on in Section~\ref{sec:2dim}.
\end{remark}

We present the proof of Proposition~\ref{prop: char vertices minkowski sum of simplices} in parts. First let us prove that the graphs arising from faces must be acyclic.

\begin{lemma}
Let $\Pi=(F_0,\dots,F_{n-1})$ and $F$ be as in Proposition~\ref{prop: char vertices minkowski sum of simplices}. 
If $F$ is a face of $P$, then $G_\Pi$ is acyclic.
\end{lemma}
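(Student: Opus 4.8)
The plan is to exhibit a ``potential function'' on the vertices of $G_\Pi$ that strictly decreases along every directed edge; the existence of a directed cycle — in particular of a loop — is then immediately contradicted.

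First I would invoke the characterization of faces of Minkowski sums recalled just before the statement: since $F=F_0+\cdots+F_{n-1}$ is a face of $P$, there is a linear functional $w\colon\R^d\to\R$, independent of $i$, whose set of maximizers over $\Delta_{\lambda_i}$ is exactly $F_i$ for every $i$. Writing $w_a:=w(e_a)$ and $M_i:=\max_{b\in\lambda_i}w_b$, the fact that $\Delta_{\lambda_i}=\conv{e_b:b\in\lambda_i}$ yields the explicit description $V(F_i)=\{a\in\lambda_i : w_a=M_i\}$, together with $w_b<M_i$ for every $b\in\lambda_i\setminus V(F_i)$.

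Next I would check that $w$ descends to a function on $V(G_\Pi)=[d]/\sim_\Pi$. If $a\sim_\Pi b$, then unwinding the transitive closure there is a chain $a=c_0,c_1,\dots,c_m=b$ in which each consecutive pair $c_j,c_{j+1}$ lies in a common $V(F_{i_j})$; for such a pair $w_{c_j}=w_{c_{j+1}}=M_{i_j}$, so $w_a=w_b$. Hence there is a well-defined value $w(\bar a)$ attached to each class $\bar a\in V(G_\Pi)$. Now suppose $\bar a\to\bar b$ is an edge of $G_\Pi$, witnessed by an index $i$ with $a'\in\bar a\cap V(F_i)$ and $b'\in\bar b\cap(\lambda_i\setminus V(F_i))$. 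Then $w(\bar a)=w_{a'}=M_i$ while $w(\bar b)=w_{b'}<M_i$, so $w(\bar a)>w(\bar b)$: every directed edge of $G_\Pi$ strictly decreases $w$.

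Finally, a directed cycle $\bar a_0\to\bar a_1\to\cdots\to\bar a_\ell=\bar a_0$ would force $w(\bar a_0)>w(\bar a_1)>\cdots>w(\bar a_\ell)=w(\bar a_0)$, which is impossible; in particular no loop $\bar a\to\bar a$ can occur either. Therefore $G_\Pi$ is acyclic. The only delicate point, and the step I would be most careful about, is the passage from the abstract hypothesis ``$F$ is a face of $P$'' to the concrete fact that a single functional $w$ simultaneously realizes all the $F_i$ as $w$-maximal faces of the summands — this is exactly the cited uniqueness of Minkowski decompositions (\cite[Lem.\ 2.1.4]{Gritzmann-Sturmfels-1993}, \cite[Prop.\ 2.1]{FUKUDA20041261}) — after which the argument is a short monotonicity observation.
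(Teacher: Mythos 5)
Your proposal is correct and follows essentially the same argument as the paper: pass from the face hypothesis to a single linear functional realizing each $F_i$ as the maximal face of $\Delta_{\lambda_i}$ (via the cited uniqueness of Minkowski decompositions), observe that the functional is constant on $\sim_\Pi$-classes and strictly decreases along every edge of $G_\Pi$, and conclude that a cycle or loop would yield an impossible strict chain of inequalities. No gaps.
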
 

\begin{proof}  
    Since $F$ is a face of $P$, every linear functional $h$ such that $P_h=F$ is also such that $(\Delta_{\lambda_i})_h=F_i$ for all $i$, by \cite[Prop.\ 2.1]{FUKUDA20041261}. 
    In particular, $h$ is constant for every vertex in a fixed equivalence class $\bar a\in V(G_\Pi)$.  Similarly, if there is an edge $\bar b\to\bar a$ between two different vertices of $G_{\Pi}$, then $h(b)>h(a)$ for every $a\in\bar a$ and $b\in \bar b$, by definition.  

    Assume $G_\Pi$ is not acyclic and contains a cycle ($k>1$) or loop ($k=1$) of the form 
    \[
    \overline{a_1}\to\overline{a_2}\to\cdots\to\overline{a_k}\to\overline{a_1}.
    \]
   Then, this would imply $h(a_{1})>h(a_{2})>\cdots >h(a_{k})>h(a_{1})$ for $a_i\in \bar a_i$, $i=1,\ldots, k$, which is clearly impossible. 
\end{proof}

To prove the converse to the previous lemma and the dimension statement in Proposition~\ref{prop: char vertices minkowski sum of simplices}, we will leverage the fact that $P$ is a generalized permutohedron \cite{10.1093/imrn/rnn153}, whose normal fan is a coarsening of the normal fan of the permutohedron, i.e., the braid arrangement \cite[Prop.\ 3.2]{PRW}. 
In \cite[Sec.\ 3]{PRW} it was shown that any cone in a normal fan of a generalized permutohedron corresponds in a natural way to a preposet.

A \emph{preposet} on $[d]$ is an equivalence relation $\sim$ together with a partial order $\preceq$ on $[d]/\sim$. 
Given a list $\Pi=(F_0,\ldots,F_{n-1})$, where each $F_i$ is a face of $\Delta_{\lambda_i}$, the condensation $Q_{\Pi}$ of $G_{\Pi}$ defines a preposet $([d],\sim_{Q_{\Pi}},\preceq_{Q_{\Pi}})$ on $[d]$. Explicitly, the vertices of the condensation naturally correspond to the elements of $[d]/\sim_{Q_{\Pi}}$, where
\begin{align*}
    a\sim_{Q_\Pi} b &\Leftrightarrow \bar{a}\text{ and }\bar{b}\text{ lie in the same strongly connected component of }G_{\Pi}\\ 
                    &\Leftrightarrow \bar{a}=\bar{b}\text{ or $\bar a$ and $\bar b$ lie in the same directed cycle of $G_\Pi$}.
\end{align*}
Let us denote the equivalence class of $a\in [d]$ under $\sim_{Q_\Pi}$ by $\hat{a}$. It follows from the definition above that $\overline{a}\subseteq\hat{a}$, with equality if and only if $G_\Pi$ is acyclic.

Next, the partial order $\preceq_{Q_\Pi}$ is defined to be the transitive closure of	 
\[
\hat a\preceq_{Q_\Pi} \hat b
\  \text{ if } \  
\bar b = \bar a \text{ or } \bar b\to\bar a,
\]
which corresponds precisely to the edges of the condensation.
 
In what follows, we will abuse notation and use $Q_{\Pi}$ interchangeably to denote either the condensation of $G_{\Pi}$ or the its induced poset $([d],\sim_{Q_{\Pi}},\preceq_{Q_{\Pi}})$. Note that any loop of $G_\Pi$ disappears upon taking its condensation.

Clearly, different $G_{\Pi}$ may give rise to the same preposet $Q_{\Pi}$.

\begin{example}
  Consider the simplices $\Delta_{\lambda_{i}}\subseteq\R^{5}$, where $\lambda_i:=\{i,i+1,i+2\}$ for $i=0,1,2$. Then, the two lists of faces
  \[
    \Pi=\left(\Delta_{\{01\}},\Delta_{\{123\}},\Delta_{\{23\}}\right)
    \quad\text{ and }\quad
    \Pi'=\left(\Delta_{\{012\}},\Delta_{\{123\}},\Delta_{\{23\}}\right)
  \]
  give rise to digraphs such that
  \[
    V(G_{\Pi})=V(G_{\Pi'})=\{\{4\},\{0,1,2,3\}\}
  \]
  and have an edge $\bar 0\to \bar 4$. 
  However, the face $\Delta_{\{01\}}$ produces two loops $\bar 0 \to \bar 2$ and $\bar 1\to \bar 2$
  at the vertex $\{0,1,2,3\}$ of $G_{\Pi}$, while these loops are not present in $G_{\Pi'}$. 
  On the other hand, $Q_{\Pi}=Q_{\Pi'}$ is the preposet $\{4\}\preceq \{0,1,2,3\}$.
  In particular, Proposition~\ref{prop: char vertices minkowski sum of simplices} implies that the Minkowski sum corresponding to $\Pi$ is not a face, but the one of $\Pi'$ is. The dimension of the corresponding face is $5-2=3$.
\end{example}

Given a preposet $Q=([d],\sim,\preceq)$, let $\sigma_Q\subseteq \R^d/(1,\ldots,1)\R$ be the cone cut out by $x_a\le x_b$ for all $a,b\in[d]$ such that $\hat a\preceq \hat b$. 
To simplify notation, for $\Pi$ as in Proposition~\ref{prop: char vertices minkowski sum of simplices} we use the notation $\sigma_{\Pi}:=\sigma_{Q_\Pi}$.
      
\begin{lemma}
Let $F$ and $\Pi=(F_0,\dots,F_{n-1})$ be as in Proposition~\ref{prop: char vertices minkowski sum of simplices}.
If $G_\Pi$ is acyclic, then $F$ is a face of $P$, $\NN(P)_F=\sigma_{\Pi}$, and $\dim(F)=d-|[d]/\sim_{\Pi}|$.
\end{lemma}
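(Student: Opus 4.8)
The plan is to establish three things simultaneously: that $F$ is a face of $P$, that its normal cone is $\sigma_\Pi$, and that its dimension is $d - |[d]/\!\sim_\Pi|$. The natural strategy is to produce an explicit linear functional $h$ whose maximizer over each $\Delta_{\lambda_i}$ is exactly $F_i$, using the acyclicity of $G_\Pi$. Since $G_\Pi$ is acyclic, the associated preposet $Q_\Pi$ is genuinely a partial order on $[d]/\!\sim_\Pi$ (no nontrivial directed cycles collapse), so we may pick a linear extension of $Q_\Pi$ and assign to the equivalence classes strictly decreasing real values $c_{\bar a}$ along this extension; then set $h(x) = \sum_{a \in [d]} c_{\bar a} x_a$. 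First I would check that for each $i$, the set of $h$-maximizers of $\Delta_{\lambda_i} = \operatorname{conv}\{e_a : a \in \lambda_i\}$ is precisely $F_i$: the vertices of $F_i$ are the indices $a \in V(F_i)$, and one must verify $c_{\bar a} = c_{\bar b}$ whenever $a,b \in V(F_i)$ (immediate from the definition of $\sim_\Pi$) and $c_{\bar a} > c_{\bar b}$ whenever $a \in V(F_i)$ and $b \in \lambda_i \setminus V(F_i)$ (this is exactly the edge $\bar a \to \bar b$ in $G_\Pi$, hence $\bar b \prec_{Q_\Pi} \bar a$ strictly, since acyclicity forbids $\bar a$ and $\bar b$ from lying on a common cycle). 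This shows $(\Delta_{\lambda_i})_h = F_i$ for all $i$, and therefore $P_h = \sum_i (\Delta_{\lambda_i})_h = \sum_i F_i = F$ by \cite[Prop.\ 2.1]{FUKUDA20041261}, so $F$ is a face of $P$.

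Next I would identify the normal cone. The cone $\sigma_\Pi = \sigma_{Q_\Pi}$ is cut out by the inequalities $x_a \le x_b$ for $\bar a \preceq_{Q_\Pi} \bar b$, together with the equalities $x_a = x_b$ for $a \sim_\Pi b$ (these come from the antisymmetric pairs). I claim $\operatorname{relint}(\sigma_\Pi) = \{h : P_h = F\}$: the functional $h$ constructed above lies in $\operatorname{relint}(\sigma_\Pi)$ by construction (strict inequalities on the comparable pairs, equalities only within classes), and conversely any $h' \in \operatorname{relint}(\sigma_\Pi)$ satisfies the same sign pattern, hence by the vertex computation above gives $(\Delta_{\lambda_i})_{h'} = F_i$ for all $i$ and $P_{h'} = F$. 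Since the relative interior of the normal cone $\NN(P)_F$ is exactly $\{h : P_h = F\}$, we conclude $\NN(P)_F = \sigma_\Pi$. (Here I would use that $P$ is a generalized permutohedron \cite{10.1093/imrn/rnn153,PRW}, so its normal fan is a coarsening of the braid fan and each of its cones is of the form $\sigma_Q$ for a preposet $Q$; this guarantees that the closure of $\{h : P_h = F\}$ is the full cone $\sigma_\Pi$ rather than merely contained in it.)

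Finally, the dimension: the cone $\sigma_\Pi$ lives in $\R^d/\R(1,\ldots,1)$ and its lineality space is cut out by all the equalities $x_a = x_b$ across comparable classes forced to be equal — but in a partial order this is only the within-class equalities — so the linear span of $\sigma_\Pi$ has codimension equal to the number of independent equations $x_a = x_b$, which is $d - |[d]/\!\sim_\Pi|$ (one equation per element beyond the first in each class). Wait — I should be careful: $\sigma_\Pi$ need not be full-dimensional, so I instead use the face-poset anti-isomorphism between $P$ and $\NN(P)$: $\dim F = d - \dim \NN(P)_F = d - \dim \sigma_\Pi$. Computing $\dim \sigma_\Pi$: its affine span is $\{x : x_a = x_b \text{ whenever } a \sim_\Pi b\}$ inside $\R^d/\R(1,\ldots,1)$, which has dimension $|[d]/\!\sim_\Pi| - 1$; but the normal fan lives in $\R^d/\R(1,\dots,1)$ while $P$ sits in $\R^d$ — I would track this constant-shift dimension bookkeeping carefully, and the clean statement is $\dim F = d - |[d]/\!\sim_\Pi|$ once one accounts for $P$ not being full-dimensional (it lies in the hyperplane $\sum x_a = n$). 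The main obstacle is exactly this last bookkeeping step — reconciling the dimension count between $\R^d$, the affine hull of $P$, and the quotient $\R^d/\R(1,\ldots,1)$ where the normal fan is taken — together with verifying that $\sigma_\Pi$ really is a cone of $\NN(P)$ (not just contained in one), for which the generalized-permutohedron structure is essential.
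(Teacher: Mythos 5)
Your construction of $h$ from a linear extension of $Q_\Pi$, the verification that $(\Delta_{\lambda_i})_h=F_i$ for every $i$, and the conclusion $P_h=F$ via the Fukuda/Gritzmann--Sturmfels decomposition are exactly the paper's argument, and your dimension bookkeeping (working in $\R^d/\R(1,\ldots,1)$, where the span of $\sigma_\Pi$ has dimension $|[d]/\!\sim_\Pi|-1$) also lands where the paper does. The inclusion $\sigma_\Pi\subseteq \NN(P)_F$ is likewise fine: you phrase it through relative interiors where the paper writes $\sigma_\Pi=\bigcup_L\sigma_L$ over linear extensions $L$, but it is the same computation.

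The genuine gap is the reverse inclusion $\NN(P)_F\subseteq\sigma_\Pi$. What you actually establish is $\relint(\sigma_\Pi)\subseteq\{h: P_h=F\}=\relint(\NN(P)_F)$, which only gives $\sigma_\Pi\subseteq\NN(P)_F$; the parenthetical appeal to ``every cone of $\NN(P)$ is of the form $\sigma_Q$ for a preposet $Q$'' does not close this. Preposet cones can be properly nested with nested relative interiors: if $Q'$ refines $Q$ by adding relations, then $\sigma_{Q'}\subsetneq\sigma_Q$ and $\relint(\sigma_{Q'})\subseteq\relint(\sigma_Q)$ (already in $\R^2$, the cone of $1\prec 2$ sits, relative interior and all, inside the cone of the two-element antichain). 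So knowing $\NN(P)_F$ is a preposet cone containing $\sigma_\Pi$ does not force equality; you must rule out $\NN(P)_F$ being the cone of a strictly coarser preposet, and this also blocks your dimension claim, which uses $\dim F=d-1-\dim\NN(P)_F=d-1-\dim\sigma_\Pi$. The paper closes this with an explicit argument: if $h\in\NN(P)_F\setminus\sigma_\Pi$, there are $a,b$ with $\bar a\succeq_{Q_\Pi}\bar b$ but $h_a<h_b$; unwinding the directed path from $\bar a$ to $\bar b$ in $G_\Pi$ yields (after relabeling) $a=a_0,\ldots,a_k=b$ with $a_i\in V(F_i)$ and $a_{i+1}\in\lambda_i$, and then $x=e_{a_0}+\cdots+e_{a_{k-1}}$ lies in $F_0+\cdots+F_{k-1}$ while $y=e_{a_1}+\cdots+e_{a_k}$ lies in $\Delta_{\lambda_0}+\cdots+\Delta_{\lambda_{k-1}}$ with $h(y)>h(x)$, contradicting $h$-maximality. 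Alternatively one can note that any $w$ with $P_w\supseteq F$ satisfies $(\Delta_{\lambda_i})_w\supseteq F_i$, hence $w_a\ge w_b$ for all $a\in V(F_i)$ and $b\in\lambda_i$, and these are precisely the generating inequalities of $\sigma_\Pi$. Either way, an explicit step is needed where you currently have only an appeal to the generalized permutohedron structure.
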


\begin{proof}
First note that, since $G_\Pi$ is acyclic, the relations $\sim_{G_\Pi}$ and $\sim_{Q_\Pi}$ are the same and $\bar{a}=\hat{a}$ for all $a\in[d]$. Consider a linear extension $L$ of $Q_\Pi$ (this is, a total order compatible with the partial order) and let $h\in \relint(\sigma_L)$. 
We claim that $F$ is the $h$-maximal face of $P$, i.e., $P_h = F$. To show that  $F\supseteq P_h$ it is enough to check that for each $F_i$ with $i=1,\dots, n-1$ we have:

\begin{enumerate}[label=(\roman*)]
    \item\label{minkowski proof item 1} $h$ is constant along $F_i$, and 
    \item $h(v) > h(w)$ for all possible pairs $v\in F_i$ and $w\in \Delta_{\lambda_{i}}\setminus F_i$.
\end{enumerate}

Indeed, this would guarantee that $F=F_1+\cdots+F_{n-1}$ is an $h$-maximal set. Note that it is possible to check both of these claims on the vertices of the $\Delta_{\lambda_{i}}$ because they are convex polytopes. 

Let $v=e_a$ and $w=e_b$ for some $a,b\in[d]$. Then, if $v,w\in F_i$ for some $i$ this would imply that $\hat{a}=\hat{b}$, so $h(v)=h(w)$ by definition; this proves \ref{minkowski proof item 1}. Similarly, if $v\in F_i$ and $w\in \Delta_{\lambda_{i}}\setminus F_i$ for some $i$, then $\hat{b}\preceq_{Q_\Pi} \hat{a}$, so $h(w)\leq h(v)$. However, equality is ruled out because $G_\Pi$ is acyclic. Indeed, $h(v)=h(w)$ if and only if $\hat{a}=\hat{b}$, which means that $\bar{a}=\bar{b}$. On the other hand, having $v\in F_i$ and $w\in \Delta_{\lambda_{i}}\setminus F_i$ means that there is an edge $\bar{a}\to\bar{b}$ in $G_\Pi$. These two observations would imply the existence of a loop $\bar{a}\to\bar{a}$, which contradicts the acyclicity of $G_\Pi$. 

Since $F\supseteq P_h$, then $F$ contains all the $h$-maximal points of $P$. 
Thus, to show $F=P_h$ it suffices to show that $h$ is constant along $F$.
Since $F=F_1+\cdots+F_m$, this can be further reduced to showing that $h$ is constant along each $F_i$, which is precisely what we showed for \ref{minkowski proof item 1} above.

The preceding argument also shows that $\sigma_L\subseteq \NN(P)_F$ for all linear extensions $L$ of $Q_\Pi$.
By the remarks after the proof of \cite[Prop.\ 3.5]{PRW} we have 
    $$\sigma_\Pi
    =\bigcup_{L} \sigma_L,
    $$
where the union is over all linear extensions of $Q_\Pi$.
It follows that $\NN(P)_F\supseteq \sigma_{\Pi}$.

To show $\NN(P)_F\subseteq \sigma_{\Pi}$, suppose that $h\in \NN(P)_F\setminus \sigma_\Pi$.
By definition of $\sigma_\Pi$ there exist $a,b$ such that $\hat a\succeq \hat b$ and $h(e_a)<h(e_b)$.
We deduce that there is a directed path from $\bar a$ to $\bar b$ in $G_\Pi$. 
This implies that, possibly after reordering indices, there exist $a=a_0,\ldots,a_k=b\in [d]$ such that $a_i\in V(F_i)$ and $a_{i+1}\in \lambda_i\setminus V(F_i)$, for each $1 \leq i \leq k-1$. 
Define the vectors $x=e_{a_0}+\cdots+e_{a_{k-1}}$ and $y=e_{a_1}+\cdots+e_{a_k}$. 
Now $x\in F_0+\cdots+F_{k-1}$, so it should be an $h$-maximal element of $\Delta_{\lambda_0}+\cdots+\Delta_{\lambda_{k-1}}$, but $y\in \Delta_{\lambda_0}+\cdots+\Delta_{\lambda_{k-1}} $ and $h(x)<h(y)$ which is a contradiction.

Lastly, by  \cite[Prop.\ 3.5]{PRW}, the minimal inequalities defining $\sigma_\Pi$ are
	$$
	\begin{cases}
	x_a\le x_b, & b\to a 
	\\
	x_a=x_b, & a\sim_\Pi b
	\end{cases}
	.
	$$
Since the cones of $\NN(P)$ lie in $\R^d/(1,\ldots,1)\R$, it follows that
    $$
    \dim(F)=d-1-\dim(\NN(P)_F)=d-1-\dim(\sigma_\Pi)=d-|[d]/\sim_{\Pi}|.
    $$
This concludes the proof. 
\end{proof}

\begin{remark}
The results in this section remain valid if we replace $e_1,\ldots, e_d$ by an arbitrary set of $d$ linearly independent vectors. In particular, the results remain valid to describe the linearity regions of a max-pooling layer that is pre-composed with an affine map of rank $d$, which could be for instance a linear convolutional layer. 
\end{remark}

%%%%%%%%%%%%%%%%%%%%%%%
\section{One-dimensional input layers} \label{sec:enumeration}

Given positive integers $n,k,s$, recall that  $P_{n,k,s}=\sum_{i=0}^{n-1}\Delta_{\lambda_i}$, where $\lambda_{i}=\{si,{si+1},\ldots,{si+k-1}\}$ for $0\leq i \leq n-1$. 
In this section, we use the characterizations of faces from the previous section to count the number of vertices and facets of the polytopes $P_{n,k,s}$.  In particular, we describe the sequence $(b_n^{(k,s)})_{n\geq 1}$, where $b_n^{(k,s)}$ is the number of vertices of the polytope $P_{n,k,s}$.

\begin{assumption}\label{asumption}
Without loss of generality in this section we assume that $\Lambda=\lambda_0\cup\cdots\cup\lambda_{n-1}$, i.e. $K:=K_1=s(n-1)+k$. 
Note that if $\Lambda\supsetneq\lambda_0\cup\cdots\cup\lambda_{n-1}$, we can replace $\Lambda$ by $\lambda_0\cup\cdots\cup\lambda_{n-1}$ without changing $P_{n,k,s}$. 
\end{assumption}

Our first result considers the case where the stride is at least half of the pooling window size.

\begin{theorem}[Large strides $s\geq \lceil k/2\rceil$]
\label{thm:LargeStrides}
Fix positive integers $k$ and $s$ such that 
$s \in \{ \lceil k/2\rceil,\ldots,k \}$. Then, the generating function of $(b_n^{(k,s)})_{n\geq 1}$ is given by

\begin{align}\label{eq:GenLargeStrides}
1+\sum_{n\geq 1} b^{(k, s)}_{n} x^n
= \frac{1}{1-kx+(k-s)(k-s-1)x^2}.    
\end{align}
The sequence satisfies the recurrence $b_{n+2}^{(k,s)}-kb_{n+1}^{(k,s)}+(k-s)(k-s-1)b_n^{(k,s)}=0$ for $n\geq 2$ with initial values $b_1^{(k,s)}=k$ and $b_2^{(k,s)}=k^2-(k-s)(k-s-1)$.
Moreover, the sequence can be written explicitly as
\begin{align} \label{eq:closed form large strides}
b_n^{(k,s)} = 
\frac{1}{ 2^{n + 1}} 
\left(
w_{-}^n + 
w_{+}^n + 
\frac{2k}{(w_{+} - w_{-})}
\left( w_+^n - w_{-}^n \right) 
\right),
\end{align}
with 
\begin{align*}
w_+ = k + \sqrt{k^2 -4(k-s)(k-s-1)}
&&
w_{-} = k -  \sqrt{k^2 -4(k-s)(k-s-1)}. 
\end{align*}
\end{theorem}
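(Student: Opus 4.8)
The plan is to combine the acyclicity criterion of Proposition~\ref{prop-dag-faces} with the transfer-matrix method of Section~\ref{sec: transfer matrix}. A vertex of $P_{n,k,s}$ amounts to a choice of one vertex $e_{a_i}$ from each $\Delta_{\lambda_i}$, so $\Pi=(e_{a_0},\dots,e_{a_{n-1}})$ has all $\sim_\Pi$-classes singletons, $G_\Pi$ has no loops, and every edge $\bar a\to\bar b$ joins two elements of a single window. The hypothesis $s\ge\lceil k/2\rceil$ is used to show $\lambda_i\cap\lambda_j=\varnothing$ whenever $|i-j|\ge2$: I will prove that a shortest directed cycle in $G_\Pi$ must have length exactly $2$ and be supported on a pair of consecutive windows. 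Indeed, if $\bar a^{(1)}\to\cdots\to\bar a^{(m)}\to\bar a^{(1)}$ is a simple cycle with edge $j$ coming from window $i_j$, then $a^{(j)}=a_{i_j}$, the $i_j$ are pairwise distinct (else two vertices coincide), and $a_{i_{j+1}}\in\lambda_{i_j}\cap\lambda_{i_{j+1}}$ forces $|i_{j}-i_{j+1}|=1$ cyclically; a set of distinct integers admitting a closed unit-step walk must have size $2$ (look at the largest index). Hence $G_\Pi$ is acyclic if and only if for every $i$ the elements $a_i,a_{i+1}$ are not two distinct members of the overlap $\lambda_i\cap\lambda_{i+1}$.

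Next I encode this locally. Writing $a_i=si+t_i$ with $t_i\in\{0,\dots,k-1\}$, the overlap $\lambda_i\cap\lambda_{i+1}$ is the set of positions $t_i\in\{s,\dots,k-1\}$ of window $i$ (equivalently $t_{i+1}\in\{0,\dots,k-1-s\}$ of window $i+1$), and $a_i=a_{i+1}$ inside the overlap means $t_{i+1}=t_i-s$. So the forbidden transitions are exactly those with $t_i\ge s$, $t_{i+1}\le k-1-s$, and $t_{i+1}\ne t_i-s$. A short count shows the number of admissible $t_{i+1}$ depends only on whether $t_i<s$ (state $L$, with $s$ positions) or $t_i\ge s$ (state $R$, with $k-s$ positions): from $L$ all $k$ values are allowed ($s$ in $L$, $k-s$ in $R$), while from $R$ exactly $s+1$ values are allowed, of which $k-s$ lie in state $R$ and $2s-k+1$ in state $L$. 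This gives the $2\times2$ transfer matrix
\[
M=\begin{pmatrix} s & 2s-k+1 \\ k-s & k-s \end{pmatrix},
\]
acting on the state-count vector, with $b_n^{(k,s)}=(1,1)\,M^{\,n-1}\,(s,\;k-s)^{\mathsf T}$, the initial vector $(s,k-s)^{\mathsf T}$ recording that window $0$ is unconstrained on the left.

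From here it is a routine computation. The characteristic polynomial of $M$ is $\lambda^{2}-k\lambda+(k-s)(k-s-1)$, so Cayley--Hamilton gives $M^{n+2}-kM^{n+1}+(k-s)(k-s-1)M^{n}=0$, whence the stated recurrence; evaluating the transfer-matrix formula at $n=1,2$ gives $b_1^{(k,s)}=k$ and $b_2^{(k,s)}=k^{2}-(k-s)(k-s-1)$. Diagonalizing $M$, whose eigenvalues are $w_\pm/2$ with $w_\pm=k\pm\sqrt{k^{2}-4(k-s)(k-s-1)}$, and using $w_++w_-=2k$ and $w_+w_-=4(k-s)(k-s-1)$ to simplify, yields the closed form \eqref{eq:closed form large strides}. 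For the generating function, from $\sum_{n\ge1}b_n^{(k,s)}x^n=x\,(1,1)\,(I-xM)^{-1}(s,k-s)^{\mathsf T}$ and $\det(I-xM)=1-kx+(k-s)(k-s-1)x^{2}$ one computes $(1,1)\,\adj(I-xM)\,(s,k-s)^{\mathsf T}=k-(k-s)(k-s-1)x$, so the numerator of $1+\sum_{n\ge1}b_n^{(k,s)}x^n$ collapses to $\det(I-xM)+x\bigl(k-(k-s)(k-s-1)x\bigr)=1$, giving \eqref{eq:GenLargeStrides}.

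The main obstacle is the structural step — proving that admissibility of a vertex choice is governed entirely by consecutive pairs, i.e.\ that $G_\Pi$ can contain no longer directed cycle. Once that is in hand, the local transition count, the transfer matrix, and all subsequent algebra are elementary.
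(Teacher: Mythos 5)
Your proposal is correct, and it rests on the same structural fact as the paper's proof: when $s\ge\lceil k/2\rceil$ non-consecutive windows are disjoint, so admissibility of $(i_0,\dots,i_{n-1})$ is governed entirely by consecutive pairs, the forbidden pairs being two distinct elements of an overlap $\lambda_j\cap\lambda_{j+1}$. Your largest-index argument for why a simple cycle in $G_\Pi$ must be a $2$-cycle on consecutive windows is a clean special-case substitute for the paper's more general Lemma~\ref{char: Gv acyclic}, and your count of $(k-s)(k-s-1)$ forbidden pairs matches. Where you genuinely diverge is in how the order-$2$ recurrence is extracted. The paper argues by inclusion--exclusion: extend each valid word of length $n+1$ by all $k$ letters and subtract the $(k-s)(k-s-1)\,b_n^{(k,s)}$ invalid extensions (using disjointness again to see that prepending any valid length-$n$ word to an invalid pair gives exactly the overcounted words), then back-solves for the generating function and the closed form. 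You instead lump the $k$ positions of a window into the two classes $L$ (outside the overlap with the next window) and $R$ (inside it), verify that the transition counts depend only on the class — the one point that needs the checks $k-s\le s$ and $t_i-s\le k-1-s<s$, which you do supply — and read the recurrence, generating function and closed form off the $2\times2$ matrix $M$. Your route makes it structurally transparent why the denominator has degree $2$ (it is the characteristic polynomial of an exact two-state quotient of the $k\times k$ matrix $A_{k,s}$ of Theorem~\ref{lemma: gf vertices 1 dim}) and packages all three conclusions uniformly; the paper's route is a self-contained combinatorial argument that avoids having to justify the lumping. Both are complete proofs.
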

 
Our second result considers the case where the window is proportional to the stride. 
\begin{theorem}[Proportional strides $s\mid k$]
\label{thm:ProporStrides}
Fix a positive integer $s$ and a nonnegative integer $r$ and let $k=s(r+1)$. Then the generating function of $(b_n^{(k,s)})_{n\geq 1}$ is given by
\begin{align} \label{eq:GF proportional strides}
1+\sum_{n\geq 1} b_{n}^{(k,s)} x^n = 
\frac{
1+(rs-s-2)x-(rs-1)x^{2}+sx^{r+1}
}
{
1 - 2(s+1) x + (s +1)^2x^{2} + s x^{r + 1} - s^{2}(r + 1) x^{r + 2} + s(r s -1)x^{r + 3} 
}
\end{align}
In particular, if $s=1$, we obtain
 \begin{align} \label{eq:GF s=1}
1+\sum_{n\geq 1} b_{n}^{(k,1)} x^n = 
\frac{1+(k-4)x-(k-2)x^2 + x^k}
{     1 - 4x+ 4x^{2} + x^{k} - kx^{k+1} + (k-2)x^{k+2}}. 
\end{align}
\end{theorem}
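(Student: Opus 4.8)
The plan is to combine the acyclic-graph criterion of Section~\ref{sec:acyclic.graph} with the transfer-matrix method of Section~\ref{sec: transfer matrix}, as announced in the introduction. By Proposition~\ref{prop-dag-faces}, a Minkowski sum $F=F_0+\cdots+F_{n-1}$ (with $F_i$ a face of $\Delta_{\lambda_i}$) is a vertex of $P_{n,k,s}$ exactly when every $F_i$ is a single vertex $e_{a_i}$ --- so that all classes of $\sim_\Pi$ are singletons and $\dim F=d-|[d]/\sim_\Pi|=0$ --- and the digraph $G_\Pi$ is acyclic; here $G_\Pi$ has vertex set $\{0,\dots,d-1\}$, an arc $a_i\to b$ for each $b\in\lambda_i\setminus\{a_i\}$, and no loops. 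Thus $b_n^{(k,s)}$ is the number of sequences $(a_0,\dots,a_{n-1})$ with $a_i\in\lambda_i=\{si,\dots,si+k-1\}$ for which $G_\Pi$ is acyclic. Since $k=s(r+1)$, I would partition the coordinate set into $n+r$ consecutive \emph{blocks} of size $s$, so that $\lambda_i$ is exactly the union of blocks $i,i+1,\dots,i+r$, and record each $a_i$ by its relative block $\gamma_i\in\{0,\dots,r\}$ (so the absolute block is $\beta_i=i+\gamma_i$) together with its position $\pi_i\in\{0,\dots,s-1\}$ inside that block.

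The heart of the argument is to show that acyclicity of $G_\Pi$ is detected by a bounded window of the sequence. Since arcs leave only chosen coordinates and $a_i\to a_j$ forces $a_j\in\lambda_i$, hence $|i-j|\le r$, consecutive coordinates of any directed cycle come from windows at index-distance at most $r$. I expect the key reduction to be that $G_\Pi$ has a directed cycle if and only if it has a $2$-cycle: a shortest cycle of length $\ge 3$ can always be short-circuited to a $2$-cycle using the interval structure of the windows. A direct check shows that windows $i<j$ form a $2$-cycle precisely when $\gamma_i\ge j-i$, $\gamma_j\le r-(j-i)$, and $a_i\neq a_j$, which in particular requires $j-i\le r$. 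Therefore the admissible sequences are those avoiding a fixed finite list of forbidden local patterns, each involving only $r+1$ consecutive windows, and they form the length-$n$ walks of a digraph $D$ whose states encode the relevant part of the last $r$ windows' choices. Because the $s$ in-block positions play interchangeable roles, one needs to remember only, for each recent window, its relative block together with the coarse information needed to resolve future position clashes; identifying states that are equivalent for all future transitions, $D$ reduces to a digraph whose adjacency matrix $A$ has entries that are polynomials in $s$ and only $r+3$ nonzero eigenvalues (cf.\ the remark after Theorem~\ref{thm: transfer matrix}).

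With $D$ in hand, Theorems~\ref{thm:An} and~\ref{thm: transfer matrix} give $1+\sum_{n\ge1}b_n^{(k,s)}x^n=P(x)/\det(I-xA)$; computing this determinant, using the banded, near-circulant shape of $A$ coming from the overlap pattern of consecutive windows, should yield the denominator $1-2(s+1)x+(s+1)^2x^2+sx^{r+1}-s^2(r+1)x^{r+2}+s(rs-1)x^{r+3}$ and the numerator $1+(rs-s-2)x-(rs-1)x^2+sx^{r+1}$. Alternatively, once the order-$(r+3)$ recurrence is read off from $\det(I-xA)$, the numerator is pinned down by matching the first few values $b_1^{(k,s)}=k,\ b_2^{(k,s)},\dots$; specializing $s=1$ and simplifying then gives \eqref{eq:GF s=1}. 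The hard part is the middle paragraph: proving the cycle-to-$2$-cycle reduction with the exact constant $r+1$ (the subtle cases being cycles that thread through several blocks and sequences with repeated in-block positions), and carrying out the state reduction so that the transfer matrix is manageable. Once that structure is laid bare, evaluating the determinant is mechanical, and the closed-form identities are confirmed with the accompanying code.
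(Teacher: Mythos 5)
Your overall framework is the right one (the acyclicity criterion of Proposition~\ref{prop-dag-faces} plus the transfer-matrix method, exactly as in the paper's Theorem~\ref{lemma: gf vertices 1 dim}), but there are two genuine gaps. The first and most important is in your middle paragraph. The reduction you need is not merely ``cycle $\Rightarrow$ some $2$-cycle'' but the sharper statement that a cycle in $G_{w(v)}$ forces a $2$-cycle between \emph{consecutive} windows (this is the paper's Lemma~\ref{char: Gv acyclic}, proved by a minimality argument using the interval structure). That sharper form is what collapses the admissible sequences to walks on a digraph $D_{k,s}$ with only $k$ states, one per standardized value of $a_i$. Your version, which forbids $2$-cycles between windows at distance up to $r$ and therefore proposes states encoding ``the relevant part of the last $r$ windows,'' starts from a state space of size roughly $k^r$ and relies on an unspecified identification of equivalent states; you yourself flag this reduction as the hard part and do not carry it out, and without the consecutive-$2$-cycle lemma there is no justification for your claim that the reduced matrix has only about $r+3$ nonzero eigenvalues (in fact the relevant count is $r+1$: the denominator in \eqref{eq:GF proportional strides} is $(x-1)^2\det(I-xA_{k,s})$ up to the trivial factor, with $\det(I-xA_{k,s})$ of degree $r+1$).

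The second gap is the numerator. Even granting the $k\times k$ transfer matrix, the denominator $\det(I-xA_{k,s})$ requires a nontrivial block-matrix/Schur-complement computation (the paper's Lemmas~\ref{lemma:charPoly}--\ref{lemma:denominators}), and ``pinning down the numerator by matching the first few values'' is not a finite check for arbitrary $r$ and $s$: since the numerator has degree $r+2$ after clearing $(x-1)^2$, you need closed forms for $b_1^{(k,s)},\dots,b_{r+3}^{(k,s)}$, where $r$ is a parameter. The paper obtains these by proving, by induction on $m$, that $b_{m+1}^{(k,s)}=(s+1)^{m-1}\bigl((m+1)s(k-s-1)+k+s(s+1)\bigr)$ for $m\le r+1$ together with a separate computation at $m=r+3$ (Lemmas~\ref{lem:TowardsNums1} and~\ref{lemma: br+3}), using an explicit block decomposition of the powers $A_{k,s}^p$. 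Some argument of this kind is indispensable; an appeal to ``the accompanying code'' cannot substitute for it when the window length $r+1$ is unbounded.
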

The general tools to prove the above theorems are given in Section~\ref{sec:SetUp} and the proofs are given in Sections~\ref{sec:ProofLargeStrides} and \ref{sec_pf_propor_strides}. In Section~\ref{sec:asymptotics} we discuss the asymptotics. Last, we give a count for the number of facets and a hyperplane description (H-description) in Section~\ref{sec_num_facets}.
\begin{remark}\label{rem_disjoint}
Let us discuss the case $1<k\le s+1$.
First, if $1 < k\leq s$ note that $\lambda_i\cap\lambda_j=\varnothing$ for all $i\neq j$ and thus
    $$
    P_{n,k,s}=\sum_{i=0}^{n-1}\Delta_{\lambda_i}
    =\prod_{i=0}^{n-1}\Delta_{\lambda_i}.
    $$
Then, the count for the number of faces of each dimension is immediate.
For example, $b^{(k,s)}_n = k^n$ if $1 < k\leq s$.
Now, let us discuss the case $k=s+1$. 
In this case $\lambda_i\cap\lambda_j$ has cardinality 1 when $|i-j|=1$ and it is empty otherwise. One can see that the associated graph $G_{\Pi}$ to any list of vertices $\Pi$ from the $\lambda_i $ is necessarily acyclic. It follows that also $b^{(k,s)}_n = k^n$ when $s=k-1$.
 \end{remark}

\begin{remark} \label{rem: case r=0}
The case $r=0$ ($k=s$) in Theorem~\ref{thm:ProporStrides} means that there is no overlap between the windows and so $b_n^{(k,k)}=k^n$. Indeed, formula \eqref{eq:GF proportional strides} gives in this case 
\[
1 + \sum_{n\geq 1}
 b_n^{(s,s)} = \frac{1-2x+x^2}{1-(k+2)x+(2k-1)x^2-kx^3}= \frac{(1-x)^2}{(1-x)^2(1-kx)}=\frac{1}{1-kx},\]
 as expected. 
Similarly, in the case $s=k-1,k$ of \eqref{eq:GenLargeStrides} we also obtain the generating function $1/(1-kx)$. Note that in these cases, the graphs $G_{\Pi}$ from Section~\ref{sec:acyclic.graph} have no edges  and so are acyclic, thus $b_n^{(k,s)}=k^n$. 
\end{remark}

\subsection{Setup and preliminary tools}\label{sec:SetUp}

For positive integers $k,s$, with $k > s$, 
let $D_{k,s}$ be the digraph with vertices $\{0,\ldots,k-1\}$ and all possible arcs except $(s+i',j')$, where $s+i' \neq j'$, $i'=0,...,k-1-s$, and $j'=0,...,k-1-s$. We denote the adjacency matrix of the digraph $D_{k,s}$ by $A=A_{k,s}=(a_{ij})_{i,j=1}^k$ with the convention $a_{ij}=1$ if $(i-1,j-1)$ is an arc in $D_{k,s}$, and $a_{ij}=0$, otherwise. See Figure~\ref{fig: graph k=3s=1} for an example.

\begin{theorem} \label{lemma: gf vertices 1 dim}
Given positive integers $k,s$, let $A=A_{k,s}$ be the adjacency matrix of the digraph $D_{k,s}$.
Then, the generating function of $(b_n^{(k,s)})_{n\geq 1}$ is given by
\[
\sum_{n\geq 0} b_{n+1}^{(k,s)} x^n  = \sum_{n \geq 0} \Bigl( 
\sum_{i,j=1}^k (A^n)_{ij}
\Bigr)x^n = \frac{\sum_{i,j=1}^k (-1)^{i+j} Q_{i,j}(x)}{Q(x)}, 
\]
where $Q(x)=\det(I - x A)$ and $Q_{i,j}(x)$ is the determinant of the submatrix $(I-xA;j,i)$. 
\end{theorem}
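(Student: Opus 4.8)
The plan is to reduce the enumeration of vertices of \(P_{n,k,s}\) to counting walks in the digraph \(D_{k,s}\) and then to apply the transfer-matrix method (Theorem~\ref{thm: transfer matrix}). By Assumption~\ref{asumption} we may assume \(\Lambda=\lambda_0\cup\cdots\cup\lambda_{n-1}\), so \(P_{n,k,s}=\sum_{i=0}^{n-1}\Delta_{\lambda_i}\). By Proposition~\ref{prop-dag-faces}, a Minkowski sum \(F=F_0+\cdots+F_{n-1}\) with each \(F_i=\{e_{a_i}\}\) a vertex of \(\Delta_{\lambda_i}\) (equivalently \(a_i\in\lambda_i\)) is a face of \(P_{n,k,s}\) exactly when the digraph \(G_\Pi\) of \(\Pi=(F_0,\dots,F_{n-1})\) is acyclic, and then \(F\) is automatically a vertex: since each \(V(F_i)\) is a singleton, \(\sim_\Pi\) is trivial and \(\dim F=d-|[d]/\sim_\Pi|=0\). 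Conversely every vertex of \(P_{n,k,s}\) arises this way, because in the (unique) decomposition of a vertex as a Minkowski sum of faces of the \(\Delta_{\lambda_i}\) each summand must be a point. Thus \(b_n^{(k,s)}\) equals the number of tuples \((a_0,\dots,a_{n-1})\), \(a_i\in\lambda_i\), for which \(G_\Pi\) is acyclic; and since all classes of \(\sim_\Pi\) are singletons the arcs of \(G_\Pi\) are precisely \(a_i\to b\) for \(b\in\lambda_i\setminus\{a_i\}\), \(i=0,\dots,n-1\).

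The combinatorial heart is the following claim, to be proved next. Write \(a_i=si+p_i\) with \(p_i\in\{0,\dots,k-1\}\) the position of the chosen vertex inside the \(i\)-th window; then \(G_\Pi\) is acyclic if and only if \((p_0,p_1,\dots,p_{n-1})\) is a walk in \(D_{k,s}\), i.e.\ \((p_i,p_{i+1})\) is an arc of \(D_{k,s}\) for every \(0\le i\le n-2\). One direction is a direct computation: comparing the intervals \(\lambda_i=\{si,\dots,si+k-1\}\) and \(\lambda_{i+1}=\{si+s,\dots,si+s+k-1\}\), one checks that windows \(i\) and \(i+1\) create a \(2\)-cycle in \(G_\Pi\) (both arcs \(a_i\to a_{i+1}\) and \(a_{i+1}\to a_i\), with \(a_i\ne a_{i+1}\)) precisely when \(p_i\ge s\), \(p_{i+1}\le k-1-s\) and \(p_i-p_{i+1}\ne s\) — which is exactly the condition that \((p_i,p_{i+1})\) is \emph{not} an arc of \(D_{k,s}\); hence a non-walk forces \(G_\Pi\) to have a cycle. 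For the converse I would argue by contradiction: starting from a directed cycle of \(G_\Pi\), I would use the structural fact that the windows are shifts of one interval, so for every vertex \(\ell\) the set \(\{h:\ell\in\lambda_h\}\) is an interval of indices (if \(\ell\in\lambda_i\cap\lambda_j\) and \(i\le h\le j\) then \(\ell\in\lambda_h\)), and then run a descent — shortening the cycle when it has a chord, pushing its largest window index downward, and shrinking the index gap of any \(2\)-cycle that appears — to arrive at a \(2\)-cycle between two \emph{consecutive} windows, contradicting the assumption that every \((p_i,p_{i+1})\) is an arc.

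Granting the claim, the vertices of \(P_{n,k,s}\) are in bijection with the length-\((n-1)\) walks in \(D_{k,s}\); by Theorem~\ref{thm:An} there are \(\sum_{i,j=1}^{k}(A^{n-1})_{ij}\) of them, where \(A=A_{k,s}\), so \(b_{n+1}^{(k,s)}=\sum_{i,j=1}^{k}(A^{n})_{ij}\). Summing over \(1\le i,j\le k\) the transfer-matrix identity \(\sum_{n\ge0}(A^n)_{ij}x^n=(-1)^{i+j}Q_{i,j}(x)/Q(x)\) of Theorem~\ref{thm: transfer matrix}, with \(Q(x)=\det(I-xA)\) and \(Q_{i,j}(x)=\det(I-xA;j,i)\), gives the stated formula \(\sum_{n\ge0}b_{n+1}^{(k,s)}x^n=\bigl(\sum_{i,j}(-1)^{i+j}Q_{i,j}(x)\bigr)/Q(x)\). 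The main obstacle is the converse half of the claim in the middle paragraph — showing that acyclicity of \(G_\Pi\) is detected purely by the consecutive pairs \((p_i,p_{i+1})\), equivalently that any directed cycle of \(G_\Pi\) already produces a \(2\)-cycle between two consecutive windows; the rest (the \(2\)-cycle computation and the transfer-matrix step) is routine.
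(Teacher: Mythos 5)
Your overall route is the same as the paper's: reduce to counting the tuples $(a_0,\dots,a_{n-1})$, $a_i\in\lambda_i$, for which $G_\Pi$ is acyclic via Proposition~\ref{prop-dag-faces} and the dimension formula, standardize to $p_i=a_i-si$, observe that the forbidden consecutive pairs are exactly the missing arcs of $D_{k,s}$, and finish with Theorems~\ref{thm:An} and~\ref{thm: transfer matrix}. Your computation of when two consecutive windows create a $2$-cycle ($p_i\ge s$, $p_{i+1}\le k-1-s$, $p_i-p_{i+1}\ne s$) is correct and consistent with the adjacency matrix of $D_{k,s}$, and the transfer-matrix step is routine, as you say.

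The genuine gap is the one you flag yourself: you never prove that a directed cycle in $G_\Pi$ forces a $2$-cycle between \emph{consecutive} windows; ``run a descent \dots\ shrinking the index gap'' is a plan, not an argument, and this is the only nontrivial step of the whole proof. It is exactly the content of the paper's Lemma~\ref{char: Gv acyclic}, which is closed as follows. The unchosen elements of $\bigcup_i\lambda_i$ are sinks, so any cycle lives on the chosen vertices $i_0,\dots,i_{n-1}$, and the inclusion $\lambda_j\cap\lambda_\ell\subseteq\lambda_k$ for $j<k<\ell$ yields two monotonicity properties of the arcs: if $i_j\to i_\ell$ and $i_k\ne i_\ell$ then $i_k\to i_\ell$, and dually if $i_\ell\to i_j$ and $i_k\ne i_j$ then $i_k\to i_j$. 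These first upgrade an arbitrary directed cycle to a $2$-cycle $i_j\leftrightarrow i_\ell$ for some $j<\ell$; then, taking $\ell-j$ minimal, any intermediate index $k$ has $i_k<\min(i_j,i_\ell)$, or $\min(i_j,i_\ell)\le i_k\le\max(i_j,i_\ell)$, or $i_k>\max(i_j,i_\ell)$, and in each case the same two properties produce a $2$-cycle $i_j\leftrightarrow i_k$ or $i_k\leftrightarrow i_\ell$ with a strictly smaller gap, a contradiction unless $\ell-j=1$. You would need to supply this (or an equivalent) argument; without it the bijection between vertices of $P_{n,k,s}$ and length-$(n-1)$ walks in $D_{k,s}$ is not established.
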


\begin{example}
    See Table~\ref{table:vertices one dim} for values of $b_{n}^{(k,s)}$ for several choices of $k$ and $s$ computed using the generating function in Theorem~\ref{lemma: gf vertices 1 dim}. 
\end{example}

\begin{table}[h]
$$\begin{array}{clrrrrrrrrrrrrrrr}
\hline
k &s \,\,\,\,\,\, \backslash n& 1&  2& 3& 4& 5& 6& 7& 8&9&10 \\ \hline
2& 1,2 &  2 & 4 & 8 & 16& 32 & 64 & 128 & 256& 512&1024\\
3&1& 3& 7& 16& 36& 81& 182& 409& 919& 2065& 4640\\
& 2,3& 3& 9& 27& 81& 243& 729& 2187& 6561& 19683& 59049\\
4&1&4& 10& 24& 56& 128& 292& 664& 1508& 3424& 7772 	\\
& 2&4& 14& 48& 164& 560& 1912& 6528& 22288& 76096& 259808 \\
& 3,4 & 4& 16& 64& 256& 1024& 4096& 16384& 65536& 262144& 1048576 \\
5&1&5& 13& 32& 76& 176& 400& 905& 2038& 4577& 10264	\\
& 2&5& 19& 69& 247& 881& 3139& 11181& 39823& 141833& 505147 \\
& 3 & 5&
 23&
 105&
 479&
 2185&
 9967&
 45465&
 207391&
 946025&
 4315343 \\
& 4,5& 5& 25& 125& 625& 3125& 15625& 78125& 390625& 1953125& 9765625\\
 \hline
\end{array}
$$
\caption{Initial terms for the number $b_n^{(k,s)}$ of vertices of the polytopes $P_{n,k,s}$ for some values of $k$ and $s$.}
\label{table:vertices one dim}
\end{table}

In order to prove this result we need some notation and a lemma. By the dimension formula in Proposition~\ref{prop: char vertices minkowski sum of simplices}, the candidate vertices of $P_{n,k,s}$ are of the form $\Pi=(e_{i_0},e_{i_1},\ldots,e_{i_{n-1}})$ where $i_j\in \lambda_{j}$. We view such $\Pi$ as a  word $w(v):=(i_0,i_1,\ldots,i_{n-1})$ corresponding to the point $v=e_{i_0}+e_{i_1}+\cdots + e_{i_{n-1}}$. By abuse of notation we use $G_{w(v)}$ to also denote the graph $G_{\Pi}$.

The graph $G_{w(v)}$ has vertices $\cup_{i=0}^{n-1} \lambda_i$ and arcs $i_j \to b$ for $b\in \lambda_j\setminus i_j$. By Proposition~\ref{prop: char vertices minkowski sum of simplices}, $v$ is a vertex whenever $G_{w(v)}$ is a acyclic. The next result characterizes the cycles of $G_{w(v)}$. 

\begin{lemma}
\label{char: Gv acyclic}
Let $w(v)=(i_0,i_1,\ldots,i_{n-1})$ where $i_j \in \lambda_j$. Then $G_{w(v)}$ has a cycle if and only if $G_{w(v)}$ has a $2$-cycle between consecutive letters of $w(v)$.
\end{lemma}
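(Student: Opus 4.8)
The direction $(\Leftarrow)$ is immediate, since a $2$-cycle is in particular a cycle, so the work lies in $(\Rightarrow)$. Two elementary facts will be used throughout. The first is a \emph{betweenness property} of the windows: if $j<j'<j''$, then $\lambda_j\cap\lambda_{j''}\subseteq\lambda_{j'}$. Indeed, $\lambda_\ell=\{s\ell,s\ell+1,\dots,s\ell+k-1\}$, so $\lambda_j\cap\lambda_{j''}=\{sj'',\dots,sj+k-1\}$ when nonempty, and every such integer $x$ satisfies $sj'\le sj''\le x$ and $x\le sj+k-1\le sj'+k-1$, hence $x\in\lambda_{j'}$. The second is the description of $G_{w(v)}$ recalled just before the lemma: only the letters $i_0,\dots,i_{n-1}$ can have outgoing arcs, and $i_j\to b$ is an arc exactly when $b\in\lambda_j$ and $b\ne i_j$; consequently, for windows $p<q$ the letters $i_p,i_q$ form a $2$-cycle in $G_{w(v)}$ whenever $i_q\in\lambda_p$, $i_p\in\lambda_q$, and $i_p\ne i_q$.

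\emph{Step 1: a shortest cycle of $G_{w(v)}$ is a $2$-cycle.} Assume $G_{w(v)}$ has a cycle and let $C\colon\ell_0\to\ell_1\to\cdots\to\ell_{m-1}\to\ell_0$ be one of minimum length; then $C$ is simple. For each $t$ choose a window $j_t$ witnessing the arc $\ell_t\to\ell_{t+1}$, i.e.\ $i_{j_t}=\ell_t$ and $\ell_{t+1}\in\lambda_{j_t}$; since the $\ell_t$ are distinct, so are the $j_t$. As arcs have distinct endpoints $m\ge 2$; suppose for contradiction $m\ge 3$. After a cyclic shift take $j_0=\max_t j_t$ and set $a=j_{m-1}$, $b=j_1$, so $a,b<j_0$ and $a\ne b$. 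The arc $\ell_{m-1}\to\ell_0$ gives $\ell_0\in\lambda_a$ and the arc $\ell_0\to\ell_1$ gives $\ell_1\in\lambda_{j_0}$. If $a<b$, betweenness yields $\ell_0\in\lambda_a\cap\lambda_{j_0}\subseteq\lambda_b$, so $\ell_1=i_b$ and $\ell_0=i_{j_0}$ form a $2$-cycle, a shorter cycle---contradiction. If $b<a$, betweenness yields $\ell_1\in\lambda_b\cap\lambda_{j_0}\subseteq\lambda_a$, so $\ell_{m-1}=i_a\to\ell_1$ is an arc, and replacing $\ell_{m-1}\to\ell_0\to\ell_1$ by it produces a simple cycle of length $m-1$---again a contradiction. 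Hence $m=2$, so $G_{w(v)}$ contains a $2$-cycle between two windows $j<j'$.

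\emph{Step 2: sliding the $2$-cycle to consecutive windows.} Among all $j<j'$ for which $i_j,i_{j'}$ form a $2$-cycle, choose one with $j'-j$ minimal; this set is nonempty by Step 1. Suppose $j'\ge j+2$ and pick any $j''$ with $j<j''<j'$. The $2$-cycle gives $i_j\ne i_{j'}$ and $i_j,i_{j'}\in\lambda_j\cap\lambda_{j'}\subseteq\lambda_{j''}$, while $\lambda_j\cap\lambda_{j'}\ne\varnothing$ forces $sj'\le sj+k-1$. If $i_{j''}\in\lambda_j$: either $i_{j''}\ne i_j$ and then $i_j,i_{j''}$ form a $2$-cycle, or $i_{j''}=i_j$ and then $i_{j''}\in\lambda_{j'}$ with $i_{j''}\ne i_{j'}$, so $i_{j''},i_{j'}$ form a $2$-cycle; in both cases the new pair is closer than $(j,j')$. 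If $i_{j''}\notin\lambda_j$: since $i_{j''}\ge sj''>sj$ we must have $i_{j''}\ge sj+k$; then $sj'\le sj+k-1<i_{j''}\le sj''+k-1\le sj'+k-1$ shows $i_{j''}\in\lambda_{j'}$, and $i_{j''}\ge sj+k>sj+k-1\ge i_{j'}$ shows $i_{j''}\ne i_{j'}$; together with $i_{j'}\in\lambda_{j''}$ this makes $i_{j''},i_{j'}$ a $2$-cycle, once more a closer pair. Each case contradicts minimality of $j'-j$, so $j'=j+1$, i.e.\ the $2$-cycle is between consecutive letters of $w(v)$, which is what we wanted.

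The argument is almost entirely interval arithmetic; the only delicate points are the choice of witnessing windows and the orientation split $a<b$ versus $b<a$ in Step 1, and in Step 2 the bookkeeping of which of $i_j,i_{j'},i_{j''}$ lies in which window together with ruling out coincidences among these letters. I expect Step 2 to be the main obstacle, as it is where the translated-interval structure of the $\lambda_j$ is exploited most delicately.
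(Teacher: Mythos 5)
Your proof is correct and follows essentially the same two-stage strategy as the paper: first show that any cycle forces a $2$-cycle between some pair of windows, then use minimality of $j'-j$ together with the interval structure $\lambda_j\cap\lambda_{j'}\subseteq\lambda_{j''}$ to slide that $2$-cycle to consecutive windows. Your Step 1 (shortest-cycle argument) spells out in detail what the paper only asserts from its two monotonicity properties of arcs, and your Step 2 case split (on whether $i_{j''}\in\lambda_j$) is just a reorganization of the paper's three-way split on the position of the intermediate letter.
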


\begin{proof}
First note that the elements $a$ in $(\cup_{i=0}^{n-1} \lambda_i) \setminus \{i_0,i_1,\ldots,i_{n-1}\}$ are sinks in $G_{w(v)}$ so they are not involved in cycles of the graph. Next, by the overlaps of the consecutive $\lambda_i$, the graph $G_{w(v)}$ satisfies the following properties: for indices $0\leq j<k<\ell \leq n-1$,

\begin{itemize}
    \item if $i_j \to i_{\ell}$ and $i_k\neq i_{\ell}$ then $i_k \to i_{\ell}$. Since if $i_{\ell} \in \lambda_{\ell} \cap (\lambda_{j}\setminus i_j)$ then $i_{\ell} \in \lambda_k\setminus i_k$. See Figure~\ref{fig: prop graph G_wvR}, 
    \item if $i_{j} \gets i_{\ell}$ and $i_k\neq i_{j}$  then $i_j \gets i_{k}$. Since if $i_{j} \in \lambda_{j} \cap (\lambda_{\ell}\setminus i_\ell)$ then $i_{j} \in \lambda_k\setminus i_k$. See Figure~\ref{fig: prop graph G_wvL}. 
\end{itemize}

These properties imply that if $G_{w(v)}$ has a cycle involving $i_j$ then it has a smaller such cycle. Thus if $G_{w(v)}$ has a cycle involving $i_j$, we can assume it is a 2-cycle between two indices, say $i_j$ and $i_{\ell}$ with $j<\ell$. Assume $\ell-j$ is minimal. If $\ell-j=1$ we are done. If $\ell-j>1$, pick an index $k$ in between $j<k<\ell$. There are three possibilities for the value $i_k$:
\[
i_k< \min(i_j,i_{\ell}) \quad \text{ or } \quad i_k \in [\min(i_j,i_{\ell}),\max(i_j,i_{\ell})],\quad \text{ or } \quad i_k> \max(i_j,i_{\ell}) 
\]
Each of the three cases would imply there is a 2-cycle between $i_j \leftrightarrow i_k$ or $i_k\leftrightarrow i_{\ell}$. See Figures~\ref{fig: prop graph G_w cycles I},\ref{fig: prop graph G_w cycles II} for illustrations of the first two cases. This contradicts the minimality of $\ell-j$. Thus the result follows. \end{proof}

\begin{figure}
    \centering
      \centering
  \begin{subfigure}[b]{0.22\textwidth}
    \centering
     {\includegraphics[scale=0.6]{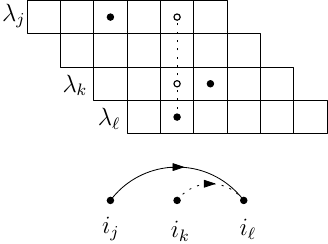}}
    \caption{}
      \label{fig: prop graph G_wvR}
\end{subfigure}
\begin{subfigure}[b]{0.22\textwidth}
  \centering
  {\includegraphics[scale=0.6]{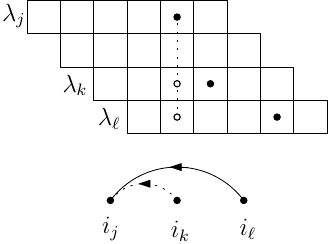}}
  \caption{}
  \label{fig: prop graph G_wvL}
\end{subfigure}
 \begin{subfigure}[b]{0.22\textwidth}
 \includegraphics[scale=0.6]{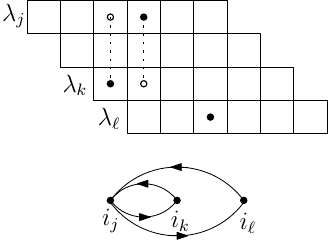}
 \caption{}
  \label{fig: prop graph G_w cycles I}
 \end{subfigure}
 \begin{subfigure}[b]{0.22\textwidth}
 \includegraphics[scale=0.6]{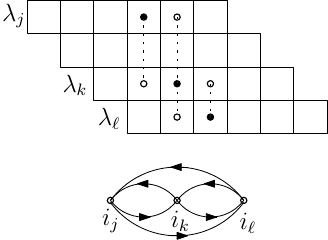}
 \caption{}
  \label{fig: prop graph G_w cycles II}
 \end{subfigure}
    \caption{Properties of the graph $G_{w(v)}$. The black dot in each row represents the corresponding entry in $w(v)$.}
\end{figure}

\begin{proof}[Proof of Theorem~\ref{lemma: gf vertices 1 dim}]
Consider a candidate vertex $w(v)=(i_0,i_1,\ldots,i_{n-1})$ of $P_{n,k,s}$ where $i_j \in \lambda_j$. By Lemma~\ref{char: Gv acyclic},
$G_{w(v)}$ is acyclic if and only if it has no 2-cycles between consecutive letters of $w(v)$. By the definition of $G_{w(v)}$, this graph has edges $i_j \to i_{j+1}$ and $i_{j+1}\to i_j$ precisely when $i_{j}\neq i_{j+1}$ and they are both in the intersection $\lambda_j \cap \lambda_{j+1}$.  
See Figure~\ref{fig: bipartite graph}. Next, we translate this condition to a condition on walks in the digraph $D_{k,s}$.

Given the word $w(v)$ let $w'(v)$ be the {\em standardization}:
\[
(i_0,i_1,\ldots,i_{n-1}) \mapsto (i_0,i_1-s,i_2-2s,\ldots,i_{n-1}-(n-1)s),
\]
so that each letter is in $\{0,1,\ldots,k-1\}$. See Figure~\ref{fig: standardization bipartite graph}. The above acyclicity condition of $G_{w(v)}$ translates to the condition that in $w'(v)=(i'_0,i'_1,\ldots,i'_{n-1})$ no pairs of consecutive elements are of the form $(s+i,j)$, where $s+i \neq j$, $i=0,...,k-1-s$, and $j=0,...,k-1-s$.
 These are exactly the missing arcs in the digraph $D_{k,s}$ (see Figure~\ref{fig: graph k=3s=1}). Thus, the number $b_n^{(k,s)}$ of such words $w'(v)$ (i.e.\ the number of vertices of $P_{n,k,s}$) is equal to the number of walks in the digraph $D_{k,s}$ of length $n-1$. We count such walks via the transfer-matrix method (Theorem~\ref{thm: transfer matrix}) to obtain the desired generating function. 
\end{proof}

\begin{example} \label{ex: case k=3,s=1}
For $k=3$ and $s=1$, the digraph $D_{3,1}$ is exactly the graph from Examples~\ref{ex: k=3 and s=1} and \ref{ex: asymptotics k=3 and s=1}. 
Thus by Theorem~\ref{lemma: gf vertices 1 dim}, the numbers $b_n^{(3,1)}$ of vertices of the polytopes $P_{n,3,1}$ satisfy
\[
\sum_{n\geq 0} b_{n+1}^{(3,1)} x^n = \frac{3+x-x^2}{1-2x-x^2+x^3}.
\]

\begin{figure}
    \centering
      \centering
  \begin{subfigure}[b]{0.2\textwidth}
    \centering
     \raisebox{20pt}{\includegraphics{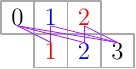}}
    \caption{}
    \label{fig: bipartite graph}
\end{subfigure}
  \begin{subfigure}[b]{0.3\textwidth}
    \centering
     \raisebox{20pt}{\includegraphics{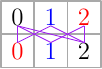}}
    \caption{}
    \label{fig: standardization bipartite graph}
\end{subfigure}
\begin{subfigure}[b]{0.4\textwidth}
  \centering
  \raisebox{-30pt}{\includegraphics{Figures/digraphk3.pdf}}
   $A = \begin{bmatrix}
 1 & 1 & 1 \\
 1 & \textcolor{blue}{0} & 1 \\
 \textcolor{red}{0} & 1 & 1 
 \end{bmatrix}$
  \caption{}
  \label{fig: graph k=3s=1}
\end{subfigure}
    \caption{For the polytope $P_{2,3,1}=\Delta_{\{012\}}+\Delta_{\{123\}}$ we illustrate (A) a bipartite graph whose \textcolor{purple}{edges} $(i,j)$ correspond to the vertices $e_i+e_j$ of $P_{2,3,1}$ and (B) the standardization/relabelling of the graph whose edges are the arcs of (C) the digraph $D_{3,1}$ and its adjacency matrix. The walks in  $D_{3,1}$ encode the vertices of $P_{2,3,1}$.}
\end{figure}
\end{example}

%%%%%%%%%%%%%%%%%%%%%%%%%%%%%%%%%%%%%%%%%%%%%%%%%%%%%%%%%%%
\subsection{Proof Theorem \ref{thm:LargeStrides}}\label{sec:ProofLargeStrides}
 As in the setup of the proof of Theorem~\ref{lemma: gf vertices 1 dim}, for $n\geq 2$, we encode a vertex $v=e_{i_0}+e_{i_1}+\cdots + e_{i_{n+1}}$ of the polytope $P_{n+2,k,s}$ as a word $w(v)=(i_0,i_1,\ldots,i_{n+1})$. 
 Two consecutive simplices $\Delta_{\lambda_i}$ and $\Delta_{\lambda_{i+1}}$ have an overlap of $k-s$ elements. Since $s>\lfloor k/2\rfloor$, non-consecutive simplices $\Delta_{\lambda_i}$ and $\Delta_{\lambda_j}$ with $|i-j|\geq 2$ have no overlap of elements. 
 Thus in the graph $G_{w(v)}$ the edges $i_j\to i_k$ are only between consecutive letters of $w(v)$. That is, the constraints on the words $w(v)$ are on consecutive letters. We form these from the words $(i_0,i_1,\ldots,i_{n})$ corresponding to vertices of $P_{n+1,k,s}$ by adding $k$ possible values $i_{n+1}$. This is an overcount since we have to exclude the invalid cases due to pairs $(i_n,i_{n+1})$ where $i_{n+1} \neq i_{n}$ and $i_{n+1} \in \lambda_{n}\cap \lambda_{n+1}$.  
 and where the rest of the word $(i_0,i_1,\ldots,i_{n-1})$ corresponds to a vertex of $P_{n,k,s}$. There are  $(k-s)(k-s-1)$ such invalid pairs $(i_{n},i_{n+1})$ and $b_n^{(k,s)}$ such words $(i_0,i_1,\ldots,i_{n-1})$. Thus
\begin{align*}
b^{(k,s)}_{n+2} \,=\, k\cdot b^{(k,s)}_{n+1}-(k-s)(k-s-1)b^{(k,s)}_n; 
\end{align*}
that is 
\begin{align*}
b^{(k,s)}_{n+2} - k\cdot b^{(k,s)}_{n+1} + (k-s)(k-s-1)b^{(k,s)}_n
= 0.  
\end{align*}
The initial cases, $b^{(k,s)}_1=k$ and $b^{(k,s)}_2=k^2-(k-s)(k-s-1)$, follow by the same argument.

Next, we find the closed formula for the generating function $G(x):=1+\sum_{n\geq 1} b_n^{(k,s)} x^n=P(x)/Q(x)$. The above recurrence implies that  the denominator $Q(x)$ is the quadratic polynomial in the statement of the theorem.  
By Theorem \ref{thm:GeneratingFunction}, the numerator $P(x)$ is a linear polynomial. 
To find it, we notice that 

\begin{align*}
G(x) &= 1+b_1^{(k,s)}x +b_2^{(k,s)}x^2 +  \cdots
\\
-kx\cdot G(x) &= \phantom{0} \;\ -kx \; \;-kb_1^{(k,s)}x^2  - \cdots
\\
 (k-s)(k-s-1)x^2\cdot G(x) &= 
\phantom{0} \;\;  \phantom{+ 0x} \;\;
 (k-s)(k-s-1)x^2 +\cdots
\end{align*}
which implies the numerator of the generating sequence is equal to
\begin{align*}
(1-kx + (k-s)(k-s-1)x^2)G(x) &= 
1 +  (b_1^{(k,s)} -k)x + (b_2^{(k,s)}-kb_1^{(k,s)}+(k-s)(k-s-1))x^2 + \cdots  
\\
&  \;\; + \left( b_{n+2}^{(k,s)}   -kb_{n+1}^{(k,s)} + (k-s)(k-s-1)b_{n}^{(k,s)} \right)
x^{n+2} + \cdots
\end{align*}
From the initial values for $b^{(k,s)}_1$ and $b^{(k,s)}_2$ we obtain 
$P(x)=(1-kx + (k-s)(k-s-1)x^2)G(x) = 1$, giving the desired formula \eqref{eq:GenLargeStrides}.

Next, solving the recurrence for $b_n^{(k,s)}$ appearing after \eqref{eq:GenLargeStrides} yields
\begin{align*}
b_n^{(k,s)} = 
\frac{1}{ 2^{n + 1}} 
\left(
(k - \sqrt{k_a})^n + 
(k + \sqrt{k_a} )^n + 
\frac{k}{\sqrt{k_a}}
\left( (k + \sqrt{k_a})^n - (k - \sqrt{k_a} )^n \right) 
\right)
\end{align*}
with $k_a = k^2 -4(k-s)(k-s-1)$ (see Theorem~\ref{thm:GeneratingFunction}). The closed formula~\eqref{eq:closed form large strides} is obtained by setting  $w_{+} = k + \sqrt{k_a}$ and $w_{-} = k - \sqrt{k_a}$.

%%%%%%%%%%%%%%%%%%%%%%%%%%%%%%%%%%%%%%%%%%%%%%%%%%%%%
%%%%%%%%%%%%%%%%%%%%%%%%%%%%%%%%%%%%%%%%%%%%%%%%%%%%%%%%%%%
\subsection{Proof of Theorem \ref{thm:ProporStrides}}\label{sec_pf_propor_strides}

When $r=0$ then $s=k$ and the result holds by Remark~\ref{rem: case r=0}. From now on, we assume that $r$ is a positive integer. 

We find a closed formula for the auxiliary generating function $F(x):=\sum_{n\geq 0} b_{n+1}^{(k,s)} x^n$ and then compute $G(x)=1+xF(x)$ to obtain the desired generating function. 

By Theorem~\ref{lemma: gf vertices 1 dim} we have that $F(x)$ is a rational function of the form $P(x)/Q(x)$ where  
\begin{align}
Q(x) &= \det(I-xA_{k,s}), \label{eq:denomGF prop srides} \\
P(x) &= \sum_{i,j=1}^k (-1)^{i+j} \det (I-xA_{k,s};j,i). \label{eq:numeratorGF prop srides}
\end{align}

First, in Lemmas~\ref{lemma:charPoly} and \ref{lemma:denominators}, we find the denominator $Q(x)$. Second, in Lemmas \ref{lem:CoeffNmerator} and \ref{lem:TowardsNums1}, we determine the numerator $P(x)$.
For the latter, we do not use formula \eqref{eq:numeratorGF prop srides} to find $P(x)$, but rather we determine the first terms $b_n^{(k,s)}$ of the series $F(x)$, and use this information to deduce the numerator (see Section~\ref{rem:numerator}).

\begin{lemma}
\label{lemma:charPoly}
Fix positive integers $s$ and $r$ and let $k=s(r+1)$. The characteristic polynomial $C(s(r+1),s)=\det(x\mathbf{I}_k - A_{k,s})$ of the matrix $A_{s(r+1),s}$ satisfies

\begin{align}\label{eq:characteristic}
\frac{C( s(r+1),s)}
{x^{(s-1)(r+1 ) }}   
&=
s^2
\frac{1}{1-x}\left(r-\frac{x-x^{r+1}}{1-x}\right)
-s(1+x)\frac{1-x^r}{1-x}
-(s-x)x^{r}. 
\end{align}

\end{lemma}

\begin{proof}
Let $\mathbf{1}_s$ be $(s \times s)$-matrix with all entries equal to one, and let $\mathbf{I}_s$ be the $(s \times s)$-identity matrix. In our case $k=s(r+1)$, we first write the matrix 
$A_{k,s}-x\mathbf{I}_k$ as the following block matrix:
\begin{align*}
A_{k,s}-x\mathbf{I}_k
=
\begin{pNiceArray}{ccc|c}
\mathbf{1}_s -x\mathbf{I}_s & \cdots & \mathbf{1}_s &\mathbf{1}_s   
\\ \hline 
&&     & \mathbf{1}_s
    \\
 & \mathbf{I}_{k-s} - xN &    & \vdots
    \\
&&     & \mathbf{1}_s  -x\mathbf{I}_s     
\end{pNiceArray}
&&
N:=
\begin{pNiceArray}{cc|cc }
& \mathbf{0} &  & \mathbf{I}_{k-2s} \\ 
& & & \\ \hline
& \mathbf{0}_{s} &  & \mathbf{0} \\ 
\end{pNiceArray}.
\end{align*}
Next, we perform elementary row operations so that the block matrix $\mathbf{I}_{k-s} - xN$ becomes the identity. This will yield  a  new block matrix of the form 
\begin{align*}
\begin{pNiceArray}{ccc|c}
\mathbf{1}_s -x\mathbf{I}_s & \mathbf{1}_s &\cdots  &\mathbf{1}_s   
\\ \hline 
&&     & \mathbf{K}_m
    \\
 & \mathbf{I}_{k-s}   &    & \vdots
    \\
&&     & \mathbf{K}_0      
\end{pNiceArray}
&& 
\mathbf{K}_i := 
\begin{pmatrix}
b_i & g_i    & \cdots & g_i   \\
g_i & b_i    & \cdots & g_i   \\
\vdots    &  &  \ddots      & \vdots\\
g_i & g_i    & \cdots & b_i
\end{pmatrix}
&& 
m:=\frac{k}{s}-2, 
\end{align*}
where $g_i= \sum_{k=0}^i x^k=(1-x^{i+1})/(1-x)$ and $b_i = g_i - x^{i+1}$, and $\mathbf{K}_i$ is a $(s \times s)$-matrix. Next, we use more row operations to simplify the blocks at the top of our matrix from
\(\begin{pmatrix}
 \mathbf{1}_s - x\mathbf{I}_s & \mathbf{1}_s &\cdots & \mathbf{1}_s    
 \end{pmatrix}\)   
to
\(
  \begin{pmatrix}
 \mathbf{H}_s & \mathbf{D}_s &\cdots&  \mathbf{D}_s    
 \end{pmatrix}   
\)
where $\mathbf{H}_s$ and $\mathbf{D}_s$ are 
$(s \times s)$-matrices:
\begin{align*}
\mathbf{H}_s:=
\begin{pNiceArray}{ccccc}
-x & 0 & \cdots& 0 & x   \\
0 & -x & \cdots& 0 & x   \\
\vdots   & &\ddots & &  \vdots \\
0 & 0 & \cdots& -x & x   \\
1 & 1 & \cdots& 1 & 1-x  \\
\end{pNiceArray}
&& 
\mathbf{D}_s:=
\begin{pNiceArray}{ccccc}
0 & 0 & \cdots& 0 & 0   \\
0 & 0 & \cdots& 0 & 0   \\
\vdots   & & & &  \vdots \\
0 & 0 & \cdots& 0 & 0   \\
1 & 1 & \cdots& 1 & 1  \\
\end{pNiceArray}
\end{align*}
The resulting new block matrix $B_{k,s}(x)$ is 
\begin{align*}
B_{k,s}(x)=\begin{pNiceArray}{c cc |c}
\mathbf{H}_s & \mathbf{D}_s & \cdots & \mathbf{D}_s   
\\ \hline 
 & & & \\
&\mathbf{I}_{k-s}&         & \mathbf{K} \\
& & &
\end{pNiceArray}, \qquad 
\mathbf{K} = 
\begin{pNiceArray}{c}
\mathbf{K}_m \\
\vdots \\
\mathbf{K}_0
\end{pNiceArray}. 
\end{align*}
Since the matrix $B_{k,s}(x)$ was obtained from $A_{k,s}-x\mathbf{I}_k$ with row operations then both matrices have the same determinant. Thus the characteristic polynomial $C(k,s):=\det(x \mathbf{I}_k-A_{k,s})=(-1)^k\det B_{k,s}(x)$. Next we flip horizontally the block matrix $B_{k,s}(x)$ so that we can use the formula for the determinant of the {\em Schur complements of block matrices} to obtain
\begin{align*}
\det B_{k,s}(x)= (-1)^{s(k-s)}\det
\begin{pNiceArray}{c|c}
  \mathbf{I}_{k-s}       & \mathbf{K}
     \\ \hline
\mathbf{C}_s &  \mathbf{D}_s   
\end{pNiceArray}
=
(-1)^{s(k-s)}\det \left( 
\mathbf{D}_s - \mathbf{C}_s \cdot \mathbf{K} \right) 
&& 
\mathbf{C}_s :=
\begin{pNiceArray}{c:ccc}
\mathbf{H}_s & \mathbf{D}_s & \cdots & \mathbf{D}_s
\end{pNiceArray} . 
\end{align*}
Before evaluating the determinant on the RHS above, we calculate each matrix in the sum separately. That is,
\begin{align*}
\mathbf{D}_s - \mathbf{C}_s \cdot \mathbf{K}
=
\mathbf{D}_s - 
\left( 
\mathbf{H}_s \cdot \mathbf{K}_m  + 
\sum_{i=0}^{m-1} \mathbf{D}_s \cdot \mathbf{K}_i
\right),  
\end{align*}  
where
\begin{align}
\mathbf{H}_s \cdot \mathbf{K}_m = \begin{pNiceArray}{ccccc}
r_m & 0& \cdots & 0 & -r_m \\
0 & r_m & \cdots & 0 & -r_m \\
\vdots &  &\ddots  & & \vdots\\
0 &0  & &r_m & -r_m\\
a_m & a_m&\cdots & a_m & c_m
\end{pNiceArray}
&&
\mathbf{D}_s \cdot \mathbf{K}_i =  \begin{pNiceArray}{ccc}
0 & \cdots & 0 \\
\vdots &  & \vdots\\
0 & \cdots & 0 \\
(s-1)g_i+b_i & \cdots & (s-1)g_i+b_i
\end{pNiceArray},
\end{align}
for $r_m = -xb_m+xg_m=x^{m+2}$, $a_m = (s-1-x)g_m+b_m$, and $c_m=(s-1)g_m+(1-x)b_m$. This gives the following expression for the sum of the matrices
\begin{align*}
\mathbf{D}_s - \mathbf{C}_s \cdot \mathbf{K} \,=\, \begin{pNiceArray}{ccccc}
-r_m & 0& \cdots & 0 & r_m \\
0 & -r_m & \cdots & 0 & r_m \\
\vdots &  &\ddots  & & \vdots\\
0 &0  & &-r_m & r_m\\
h_m - a_m & h_m-a_m &\cdots & h_m-a_m & h_m-c_m
\end{pNiceArray}, 
\end{align*}
where $h_m=1-\sum_{i=0}^{m-1} ((s-1)g_i+b_i)$. In the matrix $\mathbf{D}_s - \mathbf{C}_s \cdot \mathbf{K}$, we add to the last column a copy of each other column to make the matrix lower triangular with diagonal $-r_m,\ldots,-r_m, s\cdot h_m-(s-1)a_m-c_m$. We are now ready to calculate the determinant
\begin{align*}
\det \left( 
\mathbf{D}_s - \mathbf{C}_s \cdot \mathbf{K} \right) &= (-1)^{s-1}r_m^{s-1}(s\cdot h_m - (s-1)a_m-c_m)\\
&= (-1)^{s-1}x^{(m+2)(s-1)}\left( s-s\sum_{i=0}^{m} (sg_i -x^{i+1}) + x(sg_m - x^{m+1})\right)\\
&= (-1)^{s}x^{(m+2)(s-1)}\left(s^2 \sum_{i=0}^m g_i - s\sum_{j=0}^m x^j - sx^{m+1} - sxg_m + x^{m+2}\right).
\end{align*}
We use  $\sum_{j=0}^m x_j = g_m$ and $\sum_{i=0}^m g_i = \dfrac{1}{1-x}\left(m+1-(x-x^{m+2})/(1-x)\right)$ to obtain that 
\begin{align*}
   (-1)^s\det \left( 
\mathbf{D}_s - \mathbf{C}_s \cdot \mathbf{K} \right) =  x^{(m+2)(s-1)}\left( s^2
\frac{1}{1-x}\left(r-\frac{x-x^{r+1}}{1-x}\right)
-s(1+x)\frac{1-x^r}{1-x}
-(s-x)x^{r}\right). 
\end{align*}
Putting everything together, we have that $C(k,s)=(-1)^{k+s(k-s)+s}(-1)^s\det \left( 
\mathbf{D}_s - \mathbf{C}_s \cdot \mathbf{K} \right)$. Lastly, in our case where $s$ divides $k$, we have that $k+s(k-s)+s=k(s+1)-s(s-1)$ is even and the result follows. 
\end{proof}

\begin{lemma}\label{lemma:denominators}
Fix positive integers $s$ and $r$ and let $k=s(r+1)$. The denominator $Q(x)$ of the generating function $F(x)$  satisfies the following equation 
\begin{align*}
(x-1)^2Q(x) &= 
s(r s -1)x^{r + 3}  - s^{2}(r + 1) x^{r + 2} + s x^{r + 1}
+ (s +1)^2x^{2}   - 2(s+1) x   + 1 . 
\end{align*}
\end{lemma}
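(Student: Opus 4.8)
The plan is to start from the formula for the characteristic polynomial $C(s(r+1),s)$ established in Lemma~\ref{lemma:charPoly}, convert it into the denominator $Q(x)=\det(I-xA_{k,s})$ via the standard reciprocal-polynomial relation, and then simplify the resulting rational expression. Recall that for a $k\times k$ matrix $A$ one has $\det(I-xA)=x^k\,\charpoly_A(1/x)=x^k\det((1/x)\mathbf{I}_k-A)=(-1)^k x^k\,C(k,s)\big|_{x\mapsto 1/x}$, where $C(k,s)=\det(A_{k,s}-x\mathbf{I}_k)$ is the polynomial from Lemma~\ref{lemma:charPoly}. So the first step is simply to substitute $x\mapsto 1/x$ in \eqref{eq:characteristic}, multiply through by the appropriate power of $x$ to clear denominators, and track the sign $(-1)^k$.

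The next step is to evaluate the geometric sums. In \eqref{eq:characteristic} the right-hand side involves $\sum_{v=0}^{r-1}x^v=\frac{1-x^r}{1-x}$ and the double sum $\sum_{i=0}^{r-1}\sum_{v=0}^{i}x^v=\sum_{i=0}^{r-1}\frac{1-x^{i+1}}{1-x}=\frac{r-\frac{x(1-x^r)}{1-x}}{1-x}=\frac{r(1-x)-x(1-x^r)}{(1-x)^2}$. After substituting $x\mapsto 1/x$ these become rational functions whose denominators are powers of $(1-1/x)=(x-1)/x$, which is exactly why the factor $(x-1)^2$ appears when one clears denominators; multiplying the whole identity by $x^{(s-1)(r+1)}\cdot(\text{suitable power of }x)$ and by $(x-1)^2$ should produce a polynomial identity. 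The claimed target is that $(x-1)^2 Q(x)$ equals the explicit degree-$(r+3)$ polynomial in the statement, so the bookkeeping is: collect the $s^2$-term, the $s(1-s+x)$-term (reindexed), and the $(s-x)x^r$-term after the substitution, combine over the common denominator $(x-1)^2$, and read off the numerator.

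I would organize the computation as: (1) write $C(k,s)=x^{(s-1)(r+1)}\cdot R(x)$ with $R(x)$ the right-hand side of \eqref{eq:characteristic}; (2) form $(-1)^k x^k R(1/x)$ and simplify, noting $k=s(r+1)$ so that $x^k\cdot x^{-(s-1)(r+1)}=x^{r+1}$ absorbs cleanly; (3) multiply by $(x-1)^2$ and expand each of the three pieces of $R(1/x)$ separately, using the geometric-sum evaluations above; (4) collect like powers of $x$ and match against $s(rs-1)x^{r+3}-s^2(r+1)x^{r+2}+sx^{r+1}+(s+1)^2x^2-2(s+1)x+1$. As a sanity check I would verify the $s=1$ specialization, where $k=r+1$ and the formula should reduce to the denominator of \eqref{eq:GF s=1} after the shift from $F(x)$ to $G(x)=1+xF(x)$ accounted for by Theorem~\ref{thm:ProporStrides}, and also check small cases such as $r=1$ (i.e.\ $k=2s$, the large-stride boundary) against the denominator $1-kx+(k-s)(k-s-1)x^2$ from Theorem~\ref{thm:LargeStrides}.

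The main obstacle I anticipate is purely the algebraic bookkeeping: the three terms of $R(1/x)$ each contribute overlapping ranges of powers of $x$ (from the geometric series), and the cancellation that collapses a potentially degree-$(r+3)$-or-higher expression down to exactly the six monomials claimed is delicate — in particular, verifying that all the "middle" powers $x^3,\dots,x^{r}$ cancel identically requires care with the telescoping of $\sum_{v=0}^{i}x^v$ terms. There is no conceptual difficulty beyond Lemma~\ref{lemma:charPoly}; the risk is an off-by-one or sign error in the substitution $x\mapsto 1/x$ combined with the factor $(-1)^k$, so I would double-check the overall normalization by confirming $Q(0)=1$ (equivalently, the constant term of $(x-1)^2Q(x)$ is $1$, which matches the stated polynomial).
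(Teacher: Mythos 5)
Your approach is exactly the paper's: the entire published proof consists of the identity $C(k,s)=x^k Q(1/x)$ plus ``the result follows from this and Lemma~\ref{lemma:charPoly}'', and your plan supplies precisely the substitution $x\mapsto 1/x$, the clearing of the $(1-1/x)$ denominators by $(x-1)^2$, and the collection of terms that the paper leaves implicit (note that with the monic convention $C=\det(x\mathbf{I}_k-A)$ the relation carries no extra $(-1)^k$, which resolves the sign you were tracking). One practical warning for the bookkeeping step: as printed, the right-hand side of \eqref{eq:characteristic} appears to exceed the true value by $s^2\sum_{v=0}^{r-1}x^v$ --- the outer sum should run only to $r-2$, as one checks already for $k=3$, $s=1$ against $Q(x)=1-2x-x^2+x^3$ --- so the six-monomial target will only be reached after that correction, and the sanity checks you propose against Theorem~\ref{thm:LargeStrides} and \eqref{eq:GF s=1} are exactly the right way to detect and repair it.
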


\begin{proof}
Recall that $C(k,s)$ is the characteristic polynomial of the matrix $A_{k,s}$. Since $Q(x)=\det(I-xA_{k,s})$, by Remark~\ref{rem: rel to charpoly and deg of denom}  we have that 
$C(k,s)=x^k Q(1/x)$. The desired result follows by combining this relation with Lemma \ref{lemma:charPoly} and standard manipulations.
\end{proof}

We now turn to finding the numerator $P(x)$ of the generating function $F(x)$. In fact, we find $(x-1)^2P(x)$, which we will see has a more compact form than $P(x)$.

\begin{lemma}
\label{lem:CoeffNmerator}
Fix positive integers $s$ and $r$ and let $k=s(r+1)$. Let $P(x)$ be the numerator of the generating function $F(x)$, then coefficients of the polynomial $(x-1)^2P(x) = \sum_{m=0}^{r+2}c_{m+1}x^m$ satisfy the following relations:
\begin{align*}
c_1 & = k, \\  
c_2 & = 
(2r+1)s^2 + rs -2(s+1)k,
\\
c_{m}& =
b_{m}^{(k,s)} -2(s+1)b_{m-1}^{(k,s)} + (s+1)^2b_{m-2}^{(k,s)}
\qquad \text{ with $3 \leq  m \leq r+1$},
\\ 
c_{r+2} &=
sk + b_{r+2}^{(k,s)} -2(s+1)b_{r+1}^{(k,s)} +  (s+1)^2b_{r}^{(k,s)},
\\ 
c_{r+3} &= b_{r+3}^{(k,s)} - 2(s+1)b_{r+2}^{(k,s)} +  (s+1)^2b_{r+1}^{(k,s)}.
+rs^2(1-rs)
\end{align*}

\end{lemma}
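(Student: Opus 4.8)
The plan is to read off each $c_{m+1}$ by extracting coefficients from the identity
\[
(x-1)^2 P(x) \;=\; \bigl[(x-1)^2 Q(x)\bigr]\,F(x),
\]
which is legitimate as an equality of formal power series since $F(x)=P(x)/Q(x)$ by Theorem~\ref{lemma: gf vertices 1 dim}. Multiplying through by $(x-1)^2$ is precisely what makes this manageable: by Lemma~\ref{lemma:denominators} the polynomial
\[
(x-1)^2Q(x)=1-2(s+1)x+(s+1)^2x^2+sx^{r+1}-s^2(r+1)x^{r+2}+s(rs-1)x^{r+3}
\]
has only six nonzero coefficients, supported at the exponents $0,1,2$ and $r+1,r+2,r+3$, whereas $Q(x)$ itself (Lemma~\ref{lemma:charPoly}) is unwieldy. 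Before extracting coefficients I would record that $\deg Q=r+1$ (visible from the display above) and that the numerator $P(x)$ has degree at most $r$: indeed $F(x)$ is a power series, so its reduced form has numerator degree strictly below denominator degree, and since the reduced denominator divides $Q$ of degree $r+1$, clearing to the denominator $Q$ keeps $\deg P\le r$. This is exactly what licenses writing $(x-1)^2P(x)=\sum_{m=0}^{r+2}c_{m+1}x^m$.

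Next I would perform the extraction. Writing $F(x)=\sum_{n\ge 0}b_{n+1}^{(k,s)}x^n$ and adopting the convention $b_j^{(k,s)}=0$ for $j\le 0$, the coefficient of $x^m$ on the right is $\sum_{a}\bigl[(x-1)^2Q(x)\bigr]_{x^a}\,b_{m-a+1}^{(k,s)}$, and each of the six supported exponents $a$ reaches a given $x^m$ only when $m\ge a$. For $0\le m\le r$ only $a\in\{0,1,2\}$ survive, giving
\[
c_{m+1}=b_{m+1}^{(k,s)}-2(s+1)\,b_m^{(k,s)}+(s+1)^2\,b_{m-1}^{(k,s)},
\]
which specializes to $c_1=b_1^{(k,s)}=k$, to $c_m=b_m^{(k,s)}-2(s+1)b_{m-1}^{(k,s)}+(s+1)^2b_{m-2}^{(k,s)}$ for $3\le m\le r+1$ (an empty range when $r=1$), and to $c_2=b_2^{(k,s)}-2(s+1)k$. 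For $m=r+1$ the exponent $a=r+1$ additionally contributes $s\,b_1^{(k,s)}=sk$, and for $m=r+2$ the exponents $a=r+1,r+2$ contribute $s\,b_2^{(k,s)}-s^2(r+1)\,b_1^{(k,s)}$; this produces the stated $c_{r+2}$ together with $c_{r+3}=b_{r+3}^{(k,s)}-2(s+1)b_{r+2}^{(k,s)}+(s+1)^2b_{r+1}^{(k,s)}+\bigl(s\,b_2^{(k,s)}-s^2(r+1)k\bigr)$.

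Finally I would eliminate $b_1^{(k,s)}$ and $b_2^{(k,s)}$ from $c_2$ and $c_{r+3}$. Trivially $b_1^{(k,s)}=k=s(r+1)$. For $b_2^{(k,s)}$, the polytope $\Delta_{\lambda_0}+\Delta_{\lambda_1}$ has $k^2$ candidate vertices $e_i+e_j$, and by Lemma~\ref{char: Gv acyclic} the ones that fail to be vertices are exactly those with $i\ne j$ and $i,j$ both lying in the overlap $\lambda_0\cap\lambda_1$ (of size $k-s$), so $b_2^{(k,s)}=k^2-(k-s)(k-s-1)$, which becomes $(2r+1)s^2+rs$ upon substituting $k=s(r+1)$. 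Plugging this into $c_2$ gives $c_2=(2r+1)s^2+rs-2(s+1)k$, and into $c_{r+3}$ gives the bracketed term $s\bigl((2r+1)s^2+rs\bigr)-s^2(r+1)\cdot s(r+1)=rs^2(1-rs)$, matching the claim. There is no real obstacle here: the whole content is the bookkeeping of which monomials of $(x-1)^2Q(x)$ hit which coefficient of the product, and the one step needing a moment's care is the degree bound $\deg P\le r$, which guarantees that the list of coefficients terminates at $c_{r+3}$.
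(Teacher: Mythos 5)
Your argument is correct and is essentially the paper's own proof: both multiply $F(x)$ by the compact form of $(x-1)^2Q(x)$ from Lemma~\ref{lemma:denominators}, read off the coefficients of $x^m$ for $m\le r+2$ (with the same bookkeeping of which of the six supported exponents reach a given $m$), and then substitute $b_1^{(k,s)}=k$ and $b_2^{(k,s)}=(2r+1)s^2+rs$ — you compute $b_2^{(k,s)}=k^2-(k-s)(k-s-1)$ combinatorially via Lemma~\ref{char: Gv acyclic}, the paper sums the entries of $A_{k,s}$, and the two agree. The one step to phrase more carefully is your justification of $\deg P\le r$: it is not true in general that a power series equal to a rational function has a proper reduced form (e.g.\ $F(x)=1+x$), but the degree bound $\deg\bigl((x-1)^2P(x)\bigr)\le r+2$ that you need is exactly the one the paper also asserts by appeal to Theorem~\ref{thm:GeneratingFunction}, so your proof is at the same level of rigor as the original on this point.
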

\begin{proof}
We write the generating function as 
$$
F(x)= \frac{P(x)}{Q(x)} = 
\frac{(x-1)^2P(x)}{(x-1)^2Q(x)}
$$
with $P(x)$ and $Q(x)$ as in  Theorem  \ref{thm:GeneratingFunction}. In particular,  $\deg \left( (x-1)^2P(x) \right) \leq \deg \left( (x-1)^2Q(x) \right)-1 = r+2$.  
By Lemma \ref{lemma:denominators}, we have 
\begin{align*}
    (x-1)^2Q(x)F(x) 
&=
\left( s(rs -1)x^{r + 3}  - s^{2}(r + 1) x^{r + 2} + s x^{r + 1}
+ (s +1)^2x^{2}   - 2(s+1) x   + 1 \right)
\left( \sum_{n \geq 0}b_{n+1}^{(k,s)}x^n \right) . 
\end{align*}
Multiplying the expression $F(x)=\sum_{n \geq 0}b_{n+1}^{(k,s)}x^n$ by appropriate monomials and focusing on the coefficients of the monomials $x^{m}$ with $m \leq r+2$, we arrive to the following equalities, where $R_1(x)$ and $R_2(x)$ and represent sums of monomials of degree larger than or equal to $r+3$: 
\begin{align*}
-s^2(r+1)x^{r+2}F(x)
&=
-s^2(r+1)b_1^{(k,s)}x^{r+2} + R_1(x)
\\
sx^{r+1}F(x)
&=
sb_1^{(k,s)}x^{r+1} +  sb_2^{(k,s)}x^{r+2} + R_2(x)
\\
(s+1)^2x^2F(x)
&=
\sum_{n \geq 2}
(s+1)^2b_{n-1}^{(k,s)}x^n
\\
-2(s+1)xF(x) &=
-2(s+1)b_1^{(k,s)}x - 
\sum_{n \geq 2} 2(s+1)b_n^{(k,s)}x^n
\\
F(x) &= b_1^{(k,s)} + b_2^{(k,s)}x + \sum_{n \geq 2}b_{n+1}^{(k,s)}x^n. 
\end{align*}
By adding them up we obtain the following, where $R_3(x)$ represents a sum of monomials of degree larger than or equal to $r+3$:
\begin{align*}
    (x-1)^2Q(x)F(x) 
    &=
    b_1^{(k,s)} + 
    \left( b_2^{(k,s)} - 2(s+1)b_1^{(k,s)} \right)x + 
    \sum_{m=2}^{r}
    \left(
    b_{m+1}^{(k,s)} -2(s+1)b_{m}^{(k,s)} + (s+1)^2b_{m-1}^{(k,s)}
    \right)x^{m}
    \\ & \quad +
    \left( 
    b_{r+2}^{(k,s)} -2(s+1)b_{r+1}^{(k,s)} +  (s+1)^2b_{r}^{(k,s)}
    +sb_1^{(k,s)} 
    \right)x^{r+1}
    \\ &\quad +
    \left(
    b_{r+3}^{(k,s)} -2(s+1)b_{r+2}^{(k,s)} +  (s+1)^2b_{r+1}^{(k,s)}
    +sb_2^{(k,s)} -s^2(r+1)b_1^{(k,s)}    
    \right)x^{r+2}
    + R_3(x).
\end{align*}
The equalities 
$(x-1)^2P(x)  = c_1 +  c_2x + \cdots + c_{r+3}x^{r+2}$ 
and $(x-1)^2P(x) = (x-1)^2Q(x)F(x)$ now yield 
\begin{align*}
c_1 &  = b_1^{(k, s)} 
\\
c_2 &  = b_2^{(k, s)} - 2(s+1)b_1^{(k, s)} 
\\
c_{m+1}& =
b_{m+1}^{(k, s)} -2(s+1)b_{m}^{(k, s)} + (s+1)^2b_{m-1}^{(k, s)}
\qquad \qquad \text{ with $2 \leq  m \leq r$}
\\ 
c_{r+2} &=
b_{r+2}^{(k, s)} -2(s+1)b_{r+1}^{(k, s)} +  (s+1)^2b_{r}^{(k, s)} + sb_1^{(k,s)} 
\\ 
c_{r+3} &=
b_{r+3}^{(k,s)} -2(s+1)b_{r+2}^{(k,s)} +  (s+1)^2b_{r+1}^{(k,s)}
    +sb_2^{(k,s)} -s^2(r+1)b_1^{(k,s)} . 
\end{align*}
By a direct calculation with the matrices $A_{k,s}^0=I=I_k$ and $A_{k,s}$, we find that 
\begin{align*}
b_1^{(k,s)} = 
\sum_{i,j} I_{ij} = k,
&&
b_2^{(k,s)} = 
\sum_{i,j} (A_{k,s})_{ij}
=
(2r+1)s^2 + rs 
. 
\end{align*}
Therefore, our result follows by a direct substitution. 
\end{proof}

We will see in Lemma~\ref{lem:TowardsNums1} that most of the $c_i$s in the previous result vanish. Next, we give two general facts that we will need for the proof of Lemmas~\ref{lem:TowardsNums1}. For simplicity, we write $A:=A_{k,s}$. 

\begin{lemma}\label{lemma:prelim formula b}
Fix positive integers $s$ and $r$ and let $k=s(r+1)$. For $m\geq 1$ we have that
\begin{equation}
b_{m+2}^{(k,s)} \,=\, 
  (s+1)b_{m+1}^{(k,s)} + (k-s-1) \sum_{j=1}^k 
\sum_{v=k-s+1}^k(A^{m})_{vj}. 
    \label{eq:bm2}
\end{equation}
\end{lemma}

\begin{proof}
Given $m\geq 1$, a  direct calculation shows that

\begin{align}\label{eq:matrixA}
(A^{m+1})_{ij}
=
\begin{cases}
\sum_{v=1}^k (A^m)_{vj}
& 
\text{ if $i=1,\ldots, s$}
\\
(A^m)_{(i-s)j} + \sum_{v=k-s+1}^k (A^m)_{vj}
&
\text{ otherwise}. 
\end{cases}
\end{align}
Therefore, using \eqref{eq:matrixA} repeatedly, we obtain that 
\begin{align*}
b_{m+2}^{(k,s)} = \sum_{i,j}(A^{m+1})_{ij}
&=
\sum_{j=1}^k 
\left( \sum_{i=1}^s (A^{m+1})_{ij} 
+
\sum_{i=s+1}^k (A^{m+1})_{ij} \right)
\\
&=
\sum_{j=1}^k 
\left( \sum_{i=1}^s \sum_{v=1}^k (A^m)_{vj}
+
\sum_{i=s+1}^k 
\left(
(A^m)_{(i-s)j} + \sum_{v=k-s+1}^k (A^m)_{vj}
\right)
\right)\\
\intertext{Using $b_{m+1}^{(k,s)}=\sum_{v,j} (A^m)_{vj}$ and simplifying the summands that are independent of $i$, and reindexing the sum with terms $(A^m)_{(i-s)j}$ the above equation becomes}
&= s\cdot b_{m+1}^{(k,s)} + \sum_{j=1}^k \left(\sum_{i=1}^{k-s} (A^m)_{ij} + (k-s) 
\sum_{j=1}^k
\sum_{v=k-s+1}^k (A^m)_{vj}\right),\\
\intertext{Lastly, by splitting off one copy of $b_{m+1}$ from the sum above gives}
& = 
(s+1)b_{m+1}^{(k,s)}
+ (k-s-1)
\sum_{j=1}^k\sum_{v=k-s+1}^k (A^m)_{vj}
\end{align*}
which implies
\begin{multline}
b_{m+2}^{(k,s)} -2(s+1)b_{m+1}^{(k,s)} +(s+1)^2b_{m}^{(k,s)} 
 = \sum_{i,j} (A\cdot A^{m})_{ij} -2(s+1)b_{m+1}^{(k,s)} +(s+1)^2b_{m}^{(k,s)}\\
\begin{aligned}
&=
(s+1)b_{m+1}^{(k,s)} + 
(k-s-1)\sum_{j=1}^k\sum_{v=k-s+1}^k (A^m)_{vj} -2(s+1)b_{m+1}^{(k,s)} +(s+1)^2b_{m}^{(k,s)} 
 \\ &=
(s+1)^2b_{m}^{(k,s)} - (s+1)b_{m+1}^{(k,s)} + 
(k-s-1)\sum_{j=1}^k\sum_{v=k-s+1}^k (A^m)_{vj}.
\end{aligned}
\end{multline}
Cancelling the term of $b_m^{(k,s)}$ and collecting the terms of $b_{m+1}^{(k,s)}$ gives the desired result.
\end{proof}

\begin{lemma}
Fix positive integers $s$ and $r$ and let $k=s(r+1)$. For all $p=1,\ldots,r$, the matrix $A^p$ where $A:=A_{k,s}$ decomposes  as
\begin{align}\label{eq:decom}
A^p
=
\begin{pNiceArray}{cc}
B^{\top} & C \\ 
I_{k-sp} & B
\end{pNiceArray}    
&&
\text{ where }
&&
B=B_{(k-sp) \times sp}
=
\begin{pNiceArray}{cccc}
(s+1)^0 & (s+1)^1 & \cdots & (s+1)^{p-1} 
\end{pNiceArray} \otimes {\bf 1}_{(k-sp)\times s}
\end{align}
where $C$ is an $sp \times sp$ matrix that is  symmetric with respect to the anti-diagonal.
\end{lemma}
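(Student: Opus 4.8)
The plan is to induct on $p$, using the explicit shape of $A=A_{k,s}$ together with the row recursion recorded in the proof of Lemma~\ref{lemma:prelim formula b}. A preliminary observation does most of the work: from the definition of $D_{k,s}$ one reads off that $A$ has all entries equal to $1$ in rows $1,\dots,s$, while row $i$ for $s<i\le k$ has a single $1$ in column $i-s$ together with $1$'s in the last $s$ columns. In particular $A$ is the $p=1$ case of \eqref{eq:decom} (with $C$ the $s\times s$ all-ones matrix), and, since this zero pattern is invariant under $(i,j)\mapsto(k+1-j,\,k+1-i)$, the matrix $A$ is \emph{persymmetric}, i.e.\ equal to its own reflection across the anti-diagonal. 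As reflection across the anti-diagonal is an anti-automorphism of matrix multiplication, every power $A^p$ is persymmetric too. This is the reason the top-left block of \eqref{eq:decom} is to be read as the anti-transpose of $B$, and why nothing beyond persymmetry can be said about $C$.

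Recall from \eqref{eq:matrixA} that for $m\ge 1$ the $i$-th row of $A^{m+1}$ equals the sum of all rows of $A^m$ when $i\le s$, and equals (the $(i-s)$-th row of $A^m$) plus (the sum of the last $s$ rows of $A^m$) when $s<i\le k$. I would run the induction as follows. The base case $p=1$ is the preliminary observation. For the step, assume $1\le p\le r-1$ and that $A^p$ has the form \eqref{eq:decom}. The inequality $p\le r-1$ gives $k-sp\ge 2s$, so the last $s$ rows of $A^p$ lie inside its bottom block $[\,I_{k-sp}\mid B_p\,]$, and each of them equals $[\,e_u^{\top}\mid\beta\,]$ with $\beta:=\bigl((s+1)^0\mathbf 1_{1\times s},\dots,(s+1)^{p-1}\mathbf 1_{1\times s}\bigr)$ (constant across rows, as the blocks of $B_p$ are constant) and $u$ running over the last $s$ indices. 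Hence the sum of the last $s$ rows of $A^p$ is $\bigl[(\mathbf 0_{1\times(k-s(p+1))},\,\mathbf 1_{1\times s})\mid s\beta\bigr]$.

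Now apply the recursion to the rows $i=s(p+1)+1,\dots,k$ of $A^{p+1}$: for such $i$ we have $i-s=sp+u'$ with $1\le u'\le k-s(p+1)$, so the $(i-s)$-th row of $A^p$ is the bottom-block row $[\,e_{u'}^{\top}\mid\beta\,]$; adding the sum above and regrouping the columns as $[\,\text{first }k-s(p+1)\mid\text{next }s\mid\text{last }sp\,]$ produces $\bigl[\,e_{u'}^{\top}\mid\mathbf 1_{1\times s}\mid(s+1)\beta\,\bigr]$, which is exactly the $u'$-th row of $[\,I_{k-s(p+1)}\mid B_{p+1}\,]$ (using $\beta+s\beta=(s+1)\beta$, which shifts every exponent up by one, and that the $1$ in $e_{u'}\in\R^{\,k-sp}$ sits among the first $k-s(p+1)$ coordinates). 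Thus the bottom $k-s(p+1)$ rows of $A^{p+1}$ have the required shape, and persymmetry of $A^{p+1}$ then forces the rest: the bottom-right block $B_{p+1}$ reflects onto the top-left block (making it the anti-transpose of $B_{p+1}$), the bottom-left block $I_{k-s(p+1)}$ reflects onto itself, and the top-right block reflects onto itself, so it is some persymmetric $s(p+1)\times s(p+1)$ matrix $C_{p+1}$. This closes the induction.

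The main obstacle is the bookkeeping of the powers of $s+1$ in the bottom block during the step — seeing that adding $s$ copies of the block-row $\beta$ multiplies it by $s+1$ (shifting exponents up) while a fresh $(s+1)^0$ all-ones block appears at its front, which is precisely how $B_p$ becomes $B_{p+1}$ — together with tracking the index constraints ($0<k-s(p+1)$ and $k-sp\ge 2s$) that keep every referenced row inside the bottom block, where the clean $[\,e_u^{\top}\mid\beta\,]$ description holds.
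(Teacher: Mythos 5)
Your proof is correct and follows essentially the same route as the paper: induction on $p$ using the row recursion \eqref{eq:matrixA}, with the base case read off from the explicit zero pattern of $A_{k,s}$. Your additional persymmetry observation is a genuine improvement: it supplies the argument for the top blocks that the paper's proof omits, and it correctly identifies that the top-left block of \eqref{eq:decom} is the reflection of $B$ across the anti-diagonal rather than the literal transpose (for $p\ge 2$ these differ, e.g.\ for $k=3$, $s=1$ the top-left block of $A^2$ is $(2,1)^{\top}$ while $B=(1,2)$), which is also the form in which the lemma is actually used later via the persymmetry of $A^{r}$ and $A^{r+1}$.
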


\begin{proof}
We show Equation~\eqref{eq:decom} by induction on the power $p$ of the matrix $A$. The base case $p=1$ follows from the definition of $A$. For the case $p+1$, we directly calculate $A^{p+1} =  A \cdot A^p$. Since both matrices $A$ and $A^p$ have a block structure, we separate our analysis into four cases for calculating $(A^{p+1})_{ij}$: 
(i) $1 \leq i \leq s$ and $1 \leq j \leq k-sp$; 
(ii) $1 \leq i \leq s$ and $k-sp +1 \leq j \leq k$; (iii) $s+1 \leq i \leq k$ and $1\leq j \leq k-sp$; and (iv) $s+1 \leq i \leq k$ and 
$k-sp+1 \leq j \leq k$. 

In the first case, where $1 \leq i \leq s$ and $1 \leq j \leq k-sp$, we have
\begin{align*}
(A^{p+1})_{ij} & = \sum_{l=1}^k A_{il}\cdot A^{p}_{lj}
 =  \sum_{l=1}^k 1\cdot A^{p}_{lj}
= 1 + \sum_{l=1}^{sp} (B^{\top})_{lj} 
= 1+ s \sum_{l=1}^{p-1} (s+1)^l
=  (s+1)^p
\end{align*}
For the second case, where $s+1 \leq i \leq k$ and $1 \leq j \leq k-sp$, we observe that the $i$-th row of $A$ is the concatenation of the standard basis vector $e_{i-s}$  of $\mathbb{R}^{k-s}$ with a vector of all ones.  On the other hand, the $j$-th column of $A^p$ is the concatenation of the $j$-th column of $(B^{\top})$ and the $j$-th column of the identity matrix $I_{k-sp}$. Therefore we derive the following expression:
\begin{align*}
(A^{p+1})_{ij} 
=
\begin{pmatrix}
\delta_{1(i-s)}, \cdots,  \delta_{(k-s)(i-s)} , 1, \cdots, 1    
\end{pmatrix}
\cdot 
\begin{pmatrix}
(B^{\top})_{1j} 
\\ \vdots \\
(B^{\top})_{(sp)j}
\\
\delta_{1j}
\\ \vdots \\
\delta_{(k-sp)j}
\end{pmatrix},
\end{align*}
where $\delta_{ij}$ denotes the Kronecker delta.
We observe that $sp \leq (k-s)$ because by hypothesis we have $s(p+1) \leq k=s(r+1)$. 
We find that the above dot product becomes:
\begin{align}\label{eq:Ap1}
    (A^{p+1})_{ij}  
    =  
    \sum_{l=1}^{sp} \delta_{l(i-s)}(B^{\top})_{lj}
    + 
    \sum_{l=sp+1}^{l=k-s} 
    \delta_{l(i-s)}\delta_{(l-sp)j}
    +
    \sum_{l=1}^s
    1 \cdot \delta_{(k-sp-s+l)j}.
\end{align}
We calculate the last term in Equation \eqref{eq:Ap1}. Since, we are in the induction step, we have $p < r$. This implies 
$k-sp-s= s(r+1)-sp-s =s(r-p) \geq s$. Given this bound, we conclude that  
$\sum_{l=1}^s 1 \cdot \delta_{(k-sp-s+l)j} = 0$.

The first two terms of Equation \eqref{eq:Ap1} are a bit more subtle. We observe that for a fixed value of $i$ that only one of them can be non-zero due to the presence of the Kronecker deltas. The possible sub-cases are:
\begin{itemize}
    \item When $s+1 \leq i \leq s(p+1)$, we find that $(i-s) \leq sp$. Therefore, we have:
    \begin{align*}
    \sum_{l=1}^{sp} \delta_{l(i-s)}(B^{\top})_{lj}
    =
    (B^{\top})_{(i-s)j}, 
    &&
    \sum_{l=sp+1}^{l=k-s} 
    \delta_{l(i-s)}\delta_{(l-sp)j}
    =0
    && 
    \end{align*}
    \item When
    $s(p+1) +1 \leq i \leq k$ and 
    $1 \leq j \leq k-s(p+1)$,
    we find that 
    $sp  \leq (i - s) \leq (k-s)$.
    Therefore, we have: 
    \begin{align*}
    \sum_{l=1}^{sp} \delta_{l(i-s)}(B^{\top})_{lj}
    =
    0
    &&
    \sum_{l=sp+1}^{l=k-s} 
    \delta_{l(i-s)}\delta_{(l-sp)j}
    =
    \delta_{(i-s(p+1))j}
    && 
    \end{align*}
\end{itemize}
Therefore, in the second case we have the following possible values that $(A^{p+1})_{ij}$ can take:
   \begin{align*}
    (A^{p+1})_{ij}  
    =
    \begin{cases}
    (B^{\top})_{(i-s)j} 
    &\;\text{ if }\;  (s+1) \leq i \leq s(p+1)
    \; \text { and }\;
    \\
    \delta_{(i-s(p+1))j}
    &\;\text{ if }\;
    s(p+1) + 1\leq i \leq k
    \; \text { and }\;
    1 \leq j \leq k-s(p+1). 
    \end{cases}.
    \end{align*}
The other two cases are very similar, so they are left to the reader. 
\end{proof}

Next, we show that many of the coefficients $c_i$ from Lemma 
\ref{lem:CoeffNmerator} are equal to zero.

\begin{lemma}\label{lem:TowardsNums1}
Fix positive integers $s$ and $r$ and let  $k=s(r+1)$.  For  $2 \leq m \leq r+1$, the number of vertices 
$
b_{m+1}^{(k,s)} := \sum_{i,j} (A_{k,s}^m)_{ij}
$ satisfies the recurrence
\begin{equation} \label{eq:recurrence first vals b}
b_{m+1}^{(k,s)} = 2(s+1)b_{m}^{(k,s)} -(s+1)^2b_{m-1}^{(k,s)}, 
\end{equation}
and has the closed form
\begin{align}\label{eq:closed form for first bs}
b_{m+1}^{(k,s)} = (s+1)^{m-1}( (m+1)s(k-s-1)+k+s(s+1)).     
\end{align}
\end{lemma}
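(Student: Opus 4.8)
The plan is to first prove a stronger first-order inhomogeneous recurrence and then read off both \eqref{eq:recurrence first vals b} and the closed form \eqref{eq:closed form for first bs} from it. Concretely, I would establish
\[
b_{m+2}^{(k,s)} \;=\; (s+1)\,b_{m+1}^{(k,s)} \;+\; s(k-s-1)(s+1)^{m}\qquad\text{for } 1\le m\le r,
\]
and then solve this recurrence using the initial value $b_2^{(k,s)}$.

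For the first-order recurrence I would combine Lemma~\ref{lemma:prelim formula b} with the block decomposition \eqref{eq:decom}. The point of Lemma~\ref{lemma:prelim formula b} is that expanding $\sum_{i,j}(A\cdot A^{m})_{ij}$ row by row through \eqref{eq:matrixA} collapses it to $(s+1)b_{m+1}^{(k,s)}$ plus $(k-s-1)$ times the combined row sums of the last $s$ rows of $A^{m}$. Those row sums are exactly what \eqref{eq:decom} supplies: for $1\le m\le r$ one has $sm\le k-s$, so $A^{m}$ has the displayed block shape, and each of its last $s$ rows lies in the bottom block $[\,I_{k-sm}\mid B\,]$; such a row consists of a single $1$ coming from $I_{k-sm}$ together with a row of $B$ of weight $s\sum_{i=0}^{m-1}(s+1)^{i}=(s+1)^{m}-1$, for a total row sum of $(s+1)^{m}$. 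Summing over the $s$ rows gives $s(s+1)^{m}$, and this yields the displayed recurrence.

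To finish, divide the recurrence by $(s+1)^{m+2}$ so that it telescopes; this gives $b_{m+1}^{(k,s)}=(s+1)^{m-1}\bigl(b_2^{(k,s)}+(m-1)s(k-s-1)\bigr)$ for $1\le m\le r+1$. Substituting $b_2^{(k,s)}=\sum_{i,j}(A_{k,s})_{ij}=(2r+1)s^{2}+rs$, the value already computed in the proof of Lemma~\ref{lem:CoeffNmerator}, and rewriting with $k=s(r+1)$ and $k-s-1=rs-1$, a short expansion turns the bracket into $(m+1)s(k-s-1)+k+s(s+1)$, which is \eqref{eq:closed form for first bs}. The recurrence \eqref{eq:recurrence first vals b} then follows for free: \eqref{eq:closed form for first bs} displays each $b_j^{(k,s)}$ with $1\le j\le r+2$ (one checks the formula also returns $b_1^{(k,s)}=k$) in the form $(s+1)^{j}(\alpha+\beta j)$ for fixed $\alpha,\beta$, and since $t^{2}-2(s+1)t+(s+1)^{2}=(t-(s+1))^{2}$, any such sequence obeys $b_{m+1}^{(k,s)}-2(s+1)b_m^{(k,s)}+(s+1)^{2}b_{m-1}^{(k,s)}=0$ for all $2\le m\le r+1$.

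The step I expect to be the main obstacle is bookkeeping the range of validity rather than the algebra. The block decomposition \eqref{eq:decom}, and hence the whole argument, holds only in the stable regime $sm\le k-s$, that is $m\le r$; past that threshold the heavier overlaps among the windows $\lambda_i$ introduce extra arcs that destroy the clean block form of $A^{m}$, which is exactly why \eqref{eq:closed form for first bs} is asserted only up to $m=r+1$ and why the numerator of $F(x)$ in Lemma~\ref{lem:CoeffNmerator} has bounded degree. One must also apply \eqref{eq:matrixA} with the correct rows of $A$ and handle the low-index values $b_1^{(k,s)},b_2^{(k,s)}$ separately; once the block form is available, the row-sum accounting and the telescoping are routine.
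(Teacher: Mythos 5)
Your proposal is correct and rests on the same two ingredients as the paper's proof, namely the identity of Lemma~\ref{lemma:prelim formula b} and the block decomposition \eqref{eq:decom} of $A^m$ for $m\le r$; you merely reorganize the argument, telescoping the first-order inhomogeneous recurrence $b_{m+2}^{(k,s)}=(s+1)b_{m+1}^{(k,s)}+s(k-s-1)(s+1)^m$ and then deducing the second-order homogeneous recurrence formally from the closed form, where the paper instead runs an induction on the second-order recurrence itself. Your bookkeeping of the boundary term as $(k-s-1)$ times the combined row sums of the last $s$ rows of $A^m$, totalling $s(s+1)^m$, is exactly the factor needed to match the closed form, and your range analysis ($m\le r$ for the block form, plus the direct check that the formula extends to $b_1^{(k,s)}=k$) is sound.
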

\begin{proof}
By calculating directly using 
 $b_{l+1}^{(k,s)} = \sum_{i,j} (A^{l})_{ij}$, we obtain that
\begin{align}\label{eq: bdry conditions}
b_1^{(k,s)} &= k,  \notag \\  
b_2^{(k,s)} &= 2s(k-s-1)+k+s(s+1),\\ 
b_3^{(k,s)} &= (s+1)(3s(k-s-1)+k+s(s+1)).  
\notag
\end{align}
These initial values satisfy
$$
b_{3}^{(k,s)} -2(s+1)b_2^{(k,s)} +(s+1)^2b_{1}^{(k,s)}
= 
0.
$$
Therefore, \eqref{eq:recurrence first vals b} holds for $m=2$ and \eqref{eq:closed form for first bs} holds for $m=1,2$. 
Notice that for any $3 \leq m \leq r+1$, the case $m$ of \eqref{eq:closed form for first bs} follows from the two previous cases $m-1$ and $m-2$ of \eqref{eq:closed form for first bs} and the case $m$ of the closed formula \eqref{eq:closed form for first bs}.
Then, to complete the proof it is enough to show \eqref{eq:recurrence first vals b}  by induction on $2 \leq m\leq r+1$. The case $m=2$ holds, so we proceed with the inductive step. By inductive hypotheses, we suppose that for some $m \geq 2$, the number $b_{j+1}^{(k,s)}$ of vertices satisfies \eqref{eq:recurrence first vals b} for all $2 \leq j\leq m<r+1$. And therefore $b_{j+1}^ {(k,s)}$ satisfies the closed formula \eqref{eq:closed form for first bs} for all $2 \leq j \leq m<r+1$. 
We now verify the case $j=m+1$. From the above closed formulas for $b_m^{(k,s)}$ and $b_{m+1}^{(k,s)}$, we obtain
\begin{align*}
(s+1)^2b_{m}^{(k,s)} &= 
(s+1)^{m}(ms(k-s-1)+k+s(s+1)) 
\\
- (s+1)b_{m+1}^{(k,s)} &= 
- (s+1)^m((m+1)s(k-s-1)+k+s(s+1)), 
\end{align*}
which implies
\begin{align}\label{eq:sumb_m}
(s+1)^2b_{m}^{(k,s)} - (s+1)b_{m+1}^{(k,s)}
=
s(-k + s + 1)(s + 1)^m . 
\end{align}
By the description of the matrices in Equation~\eqref{eq:decom}, in particular $B$, we have that
\begin{align}\label{eq:doubleSum}
\sum_{v=k-s+1}^k \sum_{j=1}^k  (A^m)_{vj}  
&= \sum_{v=k-s+1}^k(1 + s\sum_{i=0}^{m-1}(s+1)^i )
\\
&= \sum_{v=k-s+1}^k(s+1)^m = s(s+1)^m.
\notag
\end{align}
We use \eqref{eq:sumb_m} and \eqref{eq:doubleSum}, and \eqref{eq:bm2} (which is valid for all $m$) to obtain
\begin{align}\label{eq:coeffB}%old
b_{m+2}^{(k,s)} -2(s+1)b_{m+1}^{(k,s)} +(s+1)^2b_{m}^{(k,s)}
&=
(s+1)^2b_m^{(k,s)} - (s+1)b_{m+1}^{(k,s)} +  (k-s-1) \sum_{j=1}^k \sum_{v=k-s+1}^k(A^{m})_{vj} . 
\notag
\\
& = 
s(-k + s + 1)(s + 1)^m
+
 (k-s-1) \sum_{j=1}^k \sum_{v=k-s+1}^k(A^{m})_{vj} 
 \\
 &= 
 s(-k + s + 1)(s + 1)^m + (k-s-1)s(s+1)^m
 \notag
 \\
 &= 0
 \notag
\end{align}
This verifies the recurrence \eqref{eq:recurrence first vals b} for $j=m+1$. 
\end{proof}

The previous result gave the closed formula \eqref{eq:closed form for first bs} for $b_{m}^{(k,s)}$ for $m=3,\ldots,r+2$. A direct computation shows \eqref{eq:closed form for first bs} also holds for $m=1,2$. The next result gives a closed formula for the next case $m=r+3$.

\begin{lemma}\label{lemma: br+3}
Fix positive integers $s$ and $r$ and let $k=s(r+1)$. We have that \[
b_{r+3}^{(k,s)}
=
(s+1)^{r+1}\bigl((r+3)s(sr-1)+s(r+1)+s(s+1)\bigr)+s(rs-1)^2.
\]
\end{lemma}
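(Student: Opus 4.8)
The plan is to run the recurrence of Lemma~\ref{lemma:prelim formula b} at the borderline index $m=r+1$, feed in the closed form for $b_{r+2}^{(k,s)}$ from Lemma~\ref{lem:TowardsNums1}, and then do the real work on a single row sum of $A^{r+1}$. Throughout one uses $k-s-1=rs-1$ and $k=s(r+1)$. Setting $m=r+1$ in \eqref{eq:bm2} and cancelling the common $(s+1)^{2}b_{r+1}^{(k,s)}$ term leaves $b_{r+3}^{(k,s)}$ expressed through $b_{r+2}^{(k,s)}$ and the combined row sums of the last $s$ rows of $A^{r+1}$; since all of those rows have equal row sums, it suffices to determine $\sum_{j}(A^{r+1})_{kj}$. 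Because \eqref{eq:closed form for first bs} is valid for $m=r+1$, I would substitute $b_{r+2}^{(k,s)}=(s+1)^{r}\bigl((r+2)s(k-s-1)+k+s(s+1)\bigr)$, so the whole lemma comes down to that one row sum.

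The decisive point --- and the reason the statement carries an extra additive term --- is that $p=r+1$ is exactly the power at which the block description \eqref{eq:decom} degenerates: there $k-sp=0$, the identity block $I_{k-sp}$ that pinned down the row sums of $A^{p}$ disappears, and the surviving $C$-block (only known to be anti-diagonal symmetric) is not under direct control. So instead of using \eqref{eq:decom} at power $r+1$, I would peel off one factor of $A$ via \eqref{eq:matrixA}: for a row index $v$ in the bottom block, $(A^{r+1})_{vj}=(A^{r})_{(v-s)j}+\sum_{v'=k-s+1}^{k}(A^{r})_{v'j}$, where the second sum contributes $s(s+1)^{r}$ per row $v$ by \eqref{eq:decom} at the legitimate power $p=r$, and the first term shifts attention to the $s$ rows of $A^{r}$ lying just above its bottom block. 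Iterating this move reduces, at each step, a sliding window of $s$ consecutive row sums of $A^{p}$ to the corresponding window of $A^{p-1}$ plus a term $s^{2}(s+1)^{p-1}$; after $r$ steps the window has slid all the way down to the first $s$ rows of $A^{1}=A$, which are all-ones rows, each of row sum $k$. Summing the resulting finite geometric series (together with the terminal $sk$) gives $\sum_{j}(A^{r+1})_{kj}=(s+1)^{r+1}+(rs-1)$, the extra $rs-1$ being precisely the contribution of the degenerate case.

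Substituting this row sum and $b_{r+2}^{(k,s)}$ into the recurrence and simplifying then finishes the proof: the $(s+1)^{r+1}$-multiples of $(r+2)s(k-s-1)$ and $s(k-s-1)$ combine into $(r+3)s(k-s-1)$, while the leftover $s(k-s-1)(rs-1)=s(rs-1)^{2}$ becomes the additive term, so that $b_{r+3}^{(k,s)}=(s+1)^{r+1}\bigl((r+3)s(rs-1)+s(r+1)+s(s+1)\bigr)+s(rs-1)^{2}$. The hard part is the middle paragraph: recognizing that \eqref{eq:decom} must not be pushed to power $r+1$, and organizing the iterated use of \eqref{eq:matrixA} so that the boundary contribution $rs-1$ is accounted for exactly --- the bookkeeping of which block of $s$ rows is being reduced at each stage, and separately checking the base case $r=1$ (where the iteration terminates at once in the all-ones rows of $A$), are where care is needed.
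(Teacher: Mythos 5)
Your proof is correct and follows essentially the same route as the paper: both arguments reduce the lemma, via the recurrence \eqref{eq:bm2} at $m=r+1$ together with the closed form \eqref{eq:closed form for first bs} for $b_{r+2}^{(k,s)}$, to the single quantity $\sum_j(A^{r+1})_{kj}$, and both obtain the value $(s+1)^{r+1}+rs-1$, whose extra summand $rs-1$ is what produces the additive term $s(rs-1)^2$. The only divergence is in how that boundary row sum is evaluated: the paper passes to the first column of $A^{r+1}$ using the anti-diagonal symmetry of powers of $A$ and applies \eqref{eq:matrixA} once against the known blocks $I_s$ and $B$ of $A^{r}$, whereas you iterate \eqref{eq:matrixA} $r$ times on a sliding window of $s$ row sums down to the all-ones rows of $A$ --- both computations are valid, and yours has the minor advantage of not invoking the anti-diagonal symmetry, which the paper asserts without proof.
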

\begin{proof}
By Lemma \ref{lemma:prelim formula b} 
with $m=r+1$,  we have that 
\begin{align*}
b_{r+3}^{(k,s)}
& = (s+1)b_{r+2}^{(k,s)} + 
  (k - s -1)\sum_{j=1}^k \sum_{v= k-s+1}^k (A^{r+1})_{vj}
\end{align*}
We use the closed form  in Equation~\eqref{eq:closed form for first bs} when $m=r+1$ for $b_{r+2}^{(k,s)}$ to obtain that 
\begin{align} \label{eq: intermediate expression for bs}
b_{r+3}^{(k,s)} & =
(s+1)^{r+1}\left( 
 s(r+2)(k-s-1) + k+ s(s+1)
\right)
+ 
(k - s -1)\sum_{j=1}^k \sum_{v= k-s+1}^k (A^{r+1})_{vj}
\end{align}
Therefore, our problem is reduced to calculating certain terms of the matrix $A^{r+1}$. 
For that purpose, 
we recall that 
we have a partial description 
of $A^r$ given by
\begin{align*}
A^{r}
=
\begin{pNiceArray}{cc}
B^{\top} & C \\ 
I_{s} & B
\end{pNiceArray}    
&&
\text{ where }
&&
B_{s \times sr}
=
\begin{pNiceArray}{cccc}
(s+1)^0 & (s+1)^1 & \cdots & (s+1)^{r} 
\end{pNiceArray} \otimes {\bf 1}_{sr \times s}
\end{align*}
The matrices $A$, $A^r$ and $A^{r+1}$ are symmetric with respect to the anti-diagonal and therefore $\sum_{v=k-s+1}^k(A^{r+1})_{vj} = \sum_{v=1}^s (A^{r+1})_{jv}$. We prefer to calculate the right hand side 
via $A\cdot A^{r}$, by only using the matrices $B^{\top}$ and $I_s$. Using this symmetry   \eqref{eq:closed form for first bs} becomes
\begin{equation}
\label{eq:closed form for first bs antidiagonal}
    b_{r+3}^{(k,s)}
 = (s+1)^{r+1}\left( 
 s(r+2)(k-s-1) + k+ s(s+1)
\right)
+ 
(k - s -1)\sum_{j=1}^k \sum_{v=1}^s (A^{r+1})_{jv}
\end{equation}
% Therefore, 
% \begin{equation}
% \label{eq: symmetry antidiagonal}
% \sum_{j=1}^k \sum_{v=k-s+1}^k(A^{r+1})_{vj} = 
% \sum_{j=1}^k \sum_{v=1}^s (A^{r+1})_{jv}
% \end{equation}
  We use Equation~\eqref{eq:matrixA} to calculate the required entries $(A^{r+1})_{jv}$ for all $j$.  For $1 \leq j \leq s$, we have that  
\begin{align} \label{eq: sum of B transpose}
(AA^r)_{j1} &= \sum_{v=1}^k 1 \cdot (A^r)_{v1}   
=
1+ \sum_{i=1}^{k-s} (B^{\top})_{i1} 
= 1+ s\sum_{i=0}^{r-1} (s+1)^i = (s+1)^r
\end{align}
The fact that the first $s$ columns of $B^{\top}$ are the same implies that 
\begin{equation}
    \label{A r+1 jv entry first s}
(A^{r+1})_{jv} = (A^{r+1})_{j1} =   (s+1)^r,
\;\; \text{ for }
1 \leq j \leq s \;
\text{ and }
1 \leq v \leq s.
\end{equation}
We recall that $A$ has an identify matrix of size $(k-s)$ which is precisely the number of rows of $B^{\top}$. 
For $(s+1) \leq j \leq k$, the $j$-th row of $A$ is of the form 
\[
(e_{j-s} , 1 \ldots 1)
\]
A direct calculation shows that
\begin{align*}
(AA^r)_{j1} &= 
B^{\top}_{(j-s)1} + 1.
\end{align*}
The fact that the first $s$ columns of $B^{\top}$ are the same implies that 
\begin{equation}
\label{A r+1 jv entry last k-s}
(A^{r+1})_{jv} = (A^{r+1})_{j1} =  B^{\top}_{(j-s)1} + 1,
\;\; \text{ for }
(s+1) \leq j \leq k \;
\text{ and }
1 \leq v \leq s.
\end{equation}
We use the above expressions \eqref{A r+1 jv entry first s},\eqref{A r+1 jv entry last k-s}, and \eqref{eq: sum of B transpose} to calculate $\sum_{j=1}^k (A^{r+1})_{j1}$:
\begin{align*}
\sum_{j=1}^k (A^{r+1})_{j1} &=     
\sum_{j=1}^s (A^{r+1})_{j1} + 
\sum_{j=s+1}^k (A^{r+1})_{j1}
\\
& = s(s+1)^r
+ 
\sum_{i=1}^{k-s} ((B^{\top})_{i1} +1)
\\
&= 
s(s+1)^r + 
(k-s)
+
\sum_{i=1}^{k-s} (B^{\top})_{i1}
\\
& =
s(s+1)^r + 
(k-s)
+
(s+1)^r-1
=
(s + 1)^{r + 1} + k  - s - 1,
\end{align*}
which implies
\begin{align*}
\sum_{j=1}^k \sum_{v=1}^s (A^{r+1})_{jv}
=
s \sum_{j=1}^k (A^{r+1})_{j1}
 =
s\left( (s + 1)^{r + 1} + k  - s - 1 \right).
\end{align*}
We substitute this in \eqref{eq:closed form for first bs antidiagonal} and do standard manipulations to obtain the desired result.
\end{proof}

\begin{proof}[Proof Theorem \ref{thm:ProporStrides}]
Let $F(x) = \sum_{n\geq 0} b^{(k,s)}_{n+1} x^n$. By Theorem~\ref{lemma: gf vertices 1 dim} we have that $F(x)$ is a rational function of the form $P(x)/Q(x)$ where $Q(x)$ is given by \eqref{eq:denomGF prop srides} and $P(x)$ is given by \eqref{eq:numeratorGF prop srides}. We calculate the fraction 
$$
F(x) = \frac{(x-1)^2P(x)}{(x-1)^2Q(x)}.
$$
The denominator $(x-1)^2Q(x)$ is obtained in 
 Lemma \ref{lemma:denominators}. 
By Lemma \ref{lem:CoeffNmerator}, we have that
$(x-1)^2P(x) = \sum_{n \geq 0}^{r+2}c_{i+1}x^i$
with the coefficients $c_1,\ldots,c_{r+3}$ satisfying certain relationships on the variables $r,s,k$ and $b_j^{(k,s)}$. 
These expressions are simplified with Lemma~\ref{lem:TowardsNums1} to yield
\begin{align*}
 c_3=c_4=\cdots =c_{r+1}=0
 &&\text{and}& &
 c_{r+2} = 0 + sk.
\end{align*}
Next, we compute $c_{r+3}$. By the formula for $b_{r+3}^{(k,s)}$ in Lemma~\ref{lemma: br+3} and the closed formula in Lemma~\ref{lemma:prelim formula b} for $b_{r+1}^{(k,s)}$ and $b_{r+2}^{(k,s)}$, we obtain the following relation similar to \eqref{eq:recurrence first vals b}
\begin{equation*} \label{eq: recurr br+3}
b_{r+3}^{(k,s)} = 2(s+1)b_{r+2}^{(k,s)} -(s+1)^2b_{r+1}^{(k,s)} + (s+1)(rs-1).
\end{equation*}
This relation gives the following simple expression for $c_{r+3}$
 \[
 c_{r+3} = b_{r+3}^{(k,s)} -2(s+1)b_{r+2}^{(k,s)} +  (s+1)^2b_{r+1}^{(k,s)}
    +sb_2^{(k,s)} -s^2(r+1)b_1^{(k,s)} = -s(rs-1).
 \]
This gives the closed following closed formula for $F(x)$.
\[
F(x)  = \sum_{n\geq 0} b_{n+1}^{(k,s)} x^n = 
\frac{
s(r+1) -s(r+s+2)x +  s^2(r+1)x^{r+1} -  s(rs-1)x^{r+2}
}
{
1 - 2(s+1) x + (s +1)^2x^{2} + s x^{r + 1} - s^{2}(r + 1) x^{r + 2} + s(r s -1)x^{r + 3}.
}
\]
Since $G(x) = 1 + \sum_{n\geq 1} b_n^{(k,s)} x^n = 1+xF(x)$, substituting the expression above for $F(x)$ yields the desired closed formula for $G(x)$.
\end{proof}

\subsection{Asymptotics of number of vertices}
\label{sec:asymptotics}

We use the Perron--Frobenius formula (see Theorem~\ref{thm: main asymptotic theorem}) to obtain general asymptotics for the number  of vertices of the polytopes $P_{n,k,s}$. 

\begin{corollary} \label{cor: asymptotics gf vertices 1 dim}
Given positive integers $n,k,s$, let $b^{(k,s)}_n$ be the number of vertices of the polytope $P_{n,k,s}$ and $\lambda_1$ be the largest positive eigenvalue of $A_{k,s}$ then
\begin{equation} \label{eq: asymptotics b}
\lim_{n\to \infty} \frac1n \ln b^{(k,s)}_n = \ln \lambda_1.
\end{equation}
\end{corollary}

\begin{proof}
The matrix $A_{k,s}$ is irreducible since the companion directed graph is strongly connected. 
Indeed, the vertex $1$ has a directed edge to every vertex, and every vertex $m>1$ has a directed edge to some vertex $m'$ with $m'<m$, so there is necessarily a directed path from $m$ to $1$. 
This implies that each vertex has a directed path to every vertex.  
The result now follows from Theorem~\ref{lemma: gf vertices 1 dim} and the Perron--Frobenius theorem (Theorem~\ref{thm: main asymptotic theorem}). 
\end{proof} 

\begin{example} \label{ex: case k=3,s=1,asymptotics}
Continuing with Example~\ref{ex: case k=3,s=1} (see also  Examples~\ref{ex: k=3 and s=1} and \ref{ex: asymptotics k=3 and s=1}), the number $b_n^{(3,1)}$ of vertices of the polytope $P_{n,3,1}$ satisfies
\[
\lim_{n\to \infty} \frac1n \ln b_n^{(3,1)} = \ln \lambda_1 \approx 0.8096.
\]
See Figure~\ref{fig: graph lambda_1} for a matrix plot of $\lambda_1$ for $k=200$ and several values of $s\leq 200$.
\end{example}

\begin{figure}
%\node at (0,0) 
 \begin{subfigure}[b]{0.44\textwidth}
  \centering
  \includegraphics[scale=0.5]{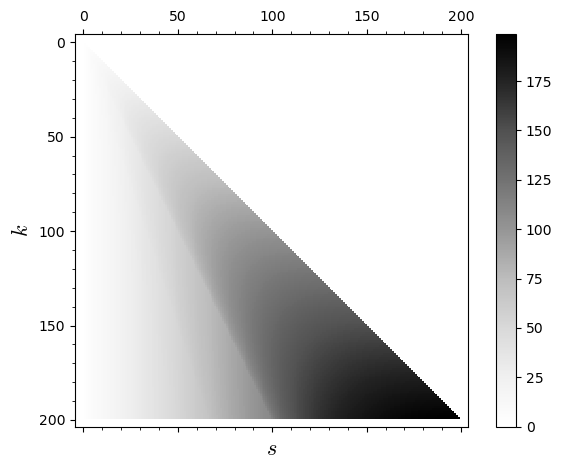}
  \caption{}
  \label{}
\end{subfigure}
\qquad 
\begin{subfigure}[b]{0.45\textwidth}
    \centering
    \raisebox{5pt}{
    \includegraphics[width=7cm]{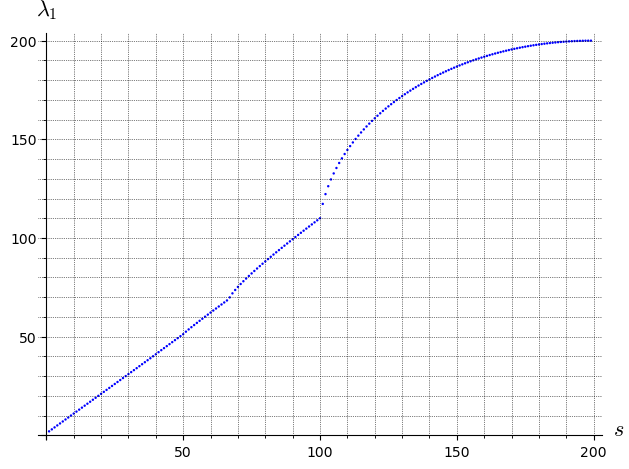}}
    \caption{}
      \label{}
\end{subfigure}
%\node at (0,-2.8) {\small $s$};
%\node at (-3.7,0) {\rotatebox{90}{\small $\lambda_1$}};
%\end{tikzpicture} 
\caption{(A) matrix plot of the largest positive eigenvalue $\lambda_1$ of the matrix $A_{k,s}$ for stride $1\leq s\leq k \leq 200$, (B) plot for the case $1\leq s \leq k$ for $k=200$. This eigenvalue determines the asymptotics for the number $b_n^{(k,s)}$ of vertices of the polytope $P_{n,k,s}$ (see Equation~\ref{eq: asymptotics b}). There are some noticeable changes of regime when $\lfloor k/s\rfloor =3,2$.} 
\label{fig: graph lambda_1}
\end{figure}

%\begin{figure}

%\node at (0,0) 
%\includegraphics[width=6cm]{Figures/lambda200girdlines.png}
%\node at (0,-2.8) {\small $s$};
%\node at (-3.7,0) {\rotatebox{90}{\small $\lambda_1$}};
%\end{tikzpicture} 
%\caption{Matrix plot of the largest positive eigenvalue $\lambda_1$ of the matrix $A_{k,s}$ for stride $1\leq s\leq k$ and fixed $k=200$, that determines the asymptotics for the number $b_n^{(k,s)}$ of vertices of the polytope $P_{n,k,s}$ (see Equation~\ref{eq: asymptotics b}). There are some noticeable changes of regime when $\lfloor k/s\rfloor =3,2$.} 
%\label{fig: graph lambda_1}
%\end{figure}

We can say a bit more for the special case of large strides  from the explicit forms of the generating functions in Theorem~\ref{thm:LargeStrides}.

\begin{corollary}[Large strides $s\geq \lfloor k/2\rfloor$]
\label{cor: asympt LargeStrides}
Fix a positive integer $k$ and $s$ such that $s \in \{ \lfloor k/2\rfloor,\ldots,k-2 \}$. Then the number $b_n^{(k,s)}$ of vertices of the polytope $P_{n,k,s}$ satisfies
\[
\lim_{n\to \infty} \frac1n\ln b^{(k,s)}_n = \ln\left(\frac{2(k-s)(k-s-1)}{k-\sqrt{k^2-4(k-s)(k-s-1)}}\right). 
\]
\end{corollary}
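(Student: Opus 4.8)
The plan is to read the exponential growth rate directly off the explicit generating function from Theorem~\ref{thm:LargeStrides} by means of the standard rational-function asymptotics in Theorem~\ref{thm:asymptotics}. Write $Q(x)=1-kx+(k-s)(k-s-1)x^2$. Theorem~\ref{thm:LargeStrides} gives $1+\sum_{n\geq1}b_n^{(k,s)}x^n=1/Q(x)$, so in the notation of Theorem~\ref{thm:asymptotics} the numerator is $P(x)=1$ and the denominator is $Q(x)$; since $P$ is a nonzero constant we have $Q(0)=1\neq0$ and $P,Q$ share no root, so the only remaining hypothesis to verify is that $Q$ has a \emph{unique} root of smallest modulus.

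I would then analyze the roots of $Q$. Put $m:=k-s$. On the range $s\in\{\lfloor k/2\rfloor,\ldots,k-2\}$ one has $m\geq2$ (so $Q$ is genuinely quadratic) and $2m\leq k+1$, whence the discriminant satisfies $k^2-4(k-s)(k-s-1)=k^2-4m(m-1)=k^2-2m(2m-2)\geq k^2-(k+1)(k-1)=1>0$. Hence $Q$ has two distinct real roots
\[
r_{\pm}=\frac{k\pm\sqrt{k^2-4(k-s)(k-s-1)}}{2(k-s)(k-s-1)},
\]
and because their product $\big((k-s)(k-s-1)\big)^{-1}$ and their sum $k\big((k-s)(k-s-1)\big)^{-1}$ are both positive, both roots are positive; thus $r_-<r_+$ and $r_-$ is the unique root of smallest modulus.

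Applying Theorem~\ref{thm:asymptotics} with $r_1=r_-$ and $z_1=1/r_1$ then shows that the exponential growth rate of the positive sequence $b_n^{(k,s)}$ is $1/r_-$, i.e.\ $\lim_{n\to\infty}\frac{1}{n}\ln b_n^{(k,s)}=\ln(1/r_-)$. A one-line rationalization completes the proof: multiplying numerator and denominator of $1/r_-$ by $k+\sqrt{k^2-4(k-s)(k-s-1)}$ and using $k^2-\big(k^2-4(k-s)(k-s-1)\big)=4(k-s)(k-s-1)$ yields
\[
\frac{1}{r_-}=\frac{2(k-s)(k-s-1)}{k-\sqrt{k^2-4(k-s)(k-s-1)}},
\]
which is the asserted value. (Alternatively one can argue from the closed form \eqref{eq:closed form large strides}: since $0<w_-<w_+$, the $w_+^n$ term dominates, so $b_n^{(k,s)}\sim c\,(w_+/2)^n$ for some constant $c>0$, and $w_+/2=1/r_-$.) This argument has no serious obstacle; the only point requiring attention is the strict positivity of the discriminant across the whole index range — in particular at the boundary value $s=\lfloor k/2\rfloor$ with $k$ odd, where $2m=k+1$ yet $4m(m-1)=k^2-1<k^2$ — since this is precisely what makes the dominant root simple and unique, as required to invoke Theorem~\ref{thm:asymptotics}.
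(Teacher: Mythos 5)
Your proposal is correct and follows the same route as the paper: apply Theorem~\ref{thm:asymptotics} to the explicit rational generating function of Theorem~\ref{thm:LargeStrides} and compute the smallest root of the quadratic denominator via the quadratic formula; your extra verification that the discriminant is positive on the whole range (so the dominant root is unique) is a detail the paper leaves implicit. One caveat inherited from the paper rather than introduced by you: the corollary's range starts at $s=\lfloor k/2\rfloor$ while Theorem~\ref{thm:LargeStrides} is stated only for $s\geq\lceil k/2\rceil$, so for odd $k$ the boundary case $s=(k-1)/2$ is not literally covered by the cited theorem.
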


\begin{proof}
Since in the rational generating function in Theorem~\ref{thm:LargeStrides}, the numerator and denominator do not have common roots, then we can apply Theorem~\ref{thm:asymptotics}. The result follows by using this result on the generating function, and using the quadratic formula  to find the smallest root of the denominator.
\end{proof}

Although we have an explicit generating function for the case of proportional strides in Theorem~\ref{thm:ProporStrides}, the degree of the denominator $Q(x)$ is $k$, and thus we do not expect a nice formula for the smallest root. See Example~\ref{ex: case k=3,s=1} for the case $k=3$, $s=1$.

%%%%%%%%%%%
\subsection{Number of facets}\label{sec_num_facets}

 In this section we give a count for the number of facets of $P_{n,k,s}$. 

\begin{theorem}\label{thm_facets_1d}
Let $s,k$ be positive integers. If 
$k>s+1$, then the number of facets of $P_{n,k,s}$ is $(s+2)(n-1)+k$. 
If 
$1<k\le s+1$, then the number of facets of $P_{n,k,s}$ is $kn$.
\end{theorem}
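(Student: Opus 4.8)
The plan is to pin down the facets in two ways that meet in the middle: an explicit inequality description of $P_{n,k,s}$ gives an upper bound on the number of facets, and an explicit construction of facets via Proposition~\ref{prop: char vertices minkowski sum of simplices} gives a matching lower bound. Write $N:=s(n-1)+k$, so that $\Lambda=\{0,1,\dots,N-1\}$ by Assumption~\ref{asumption}. For $k>s+1$ I will show that $P_{n,k,s}$ equals the polytope
\[
R\ :=\ \Bigl\{\, x\in\R^{N}\ :\ \textstyle\sum_{l=0}^{N-1}x_{l}=n,\ \ x_{l}\ge 0\ \ \forall l,\ \ \sum_{l=0}^{sm-1}x_{l}\le m\ \text{ and }\ \sum_{l=N-sm}^{N-1}x_{l}\le m\ \ (1\le m\le n-1)\,\Bigr\},
\]
which, inside its affine hull $\{\sum_l x_l=n\}$, is cut out by exactly $N+2(n-1)=(s+2)(n-1)+k$ inequalities; hence $P_{n,k,s}$ has at most that many facets. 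The case $1<k\le s+1$ is handled separately by a product decomposition.

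\emph{Proof that $P_{n,k,s}=R$ for $k>s+1$.} The inclusion $\subseteq$ is immediate: if $x=\sum_{i=0}^{n-1}v_i$ with $v_i\in\Delta_{\lambda_i}$ then $x\ge0$, $\sum_l x_l=n$, and $\sum_{l=0}^{sm-1}x_l\le\#\{i:\lambda_i\cap[0,sm-1]\neq\varnothing\}=m$, since $\lambda_i\cap[0,sm-1]\neq\varnothing$ iff $si\le sm-1$ iff $i\le m-1$; the right-hand inequalities are symmetric. For $\supseteq$, producing $v_i\in\Delta_{\lambda_i}$ with $\sum_i v_i=x$ is exactly the feasibility of the transportation problem that ships $x_l$ units out of each $l\in\Lambda$ and one unit into each window $i$, along edges $l\to i$ with $l\in\lambda_i$. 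By the max-flow--min-cut theorem this is feasible iff $\sum_l x_l=n$ and, for every $W\subseteq\{0,\dots,n-1\}$,
\[
|W|\ \ge\ \sum\bigl\{\,x_l\ :\ \Gamma(l)\subseteq W\,\bigr\},\qquad \Gamma(l):=\{\,i:\ l\in\lambda_i\,\}.
\]
Each $\Gamma(l)$ is a nonempty, contiguous block of window indices, so writing $W$ as the disjoint union of its maximal runs $I_1,\dots,I_t$ makes $\{l:\Gamma(l)\subseteq W\}$ the disjoint union of the $\{l:\Gamma(l)\subseteq I_j\}$; thus it suffices to check the inequality for intervals $W=[a,b]$. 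For these, $\{l:\Gamma(l)\subseteq[a,b]\}$ is an explicit sub-interval of $\Lambda$ (a prefix, a suffix, all of $\Lambda$, or $[sa-s+k,\ sb+s-1]$ in the interior case), and the required inequality drops out of the defining inequalities of $R$ together with $\sum_l x_l=n$ — e.g.\ in the interior case one bounds $\sum_{l=0}^{sa-s+k-1}x_l\ge a$ and $\sum_{l=sb+s}^{N-1}x_l\ge n-1-b$ using the $R$-inequalities with $m=n-a$ and $m=b+1$. This proves $P_{n,k,s}=R$; in particular $\dim P_{n,k,s}=N-1$ (the barycenter of $\Lambda$ lies in the relative interior of $R$, as $sn<N$).

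\emph{Each inequality of $R$ is facet-defining for $k>s+1$.} For $S\subseteq\Lambda$, the face of $P_{n,k,s}$ on which $x\mapsto\sum_{l\in S}x_l$ is maximal equals $\sum_i F_i$ with $F_i=\Delta_{\lambda_i\cap S}$ if $\lambda_i\cap S\neq\varnothing$ and $F_i=\Delta_{\lambda_i}$ otherwise; set $\Pi=(F_0,\dots,F_{n-1})$, so $V(F_i)=\lambda_i\cap S$ or $\lambda_i$ accordingly. For $S=\Lambda\setminus\{j\}$ one has $V(F_i)=\lambda_i\setminus\{j\}$ for every $i$, so $\{j\}$ is one $\sim_\Pi$-class and $\Lambda\setminus\{j\}$ is a single class because consecutive windows still overlap after deleting one element (here $k-s\ge2$); $G_\Pi$ carries only the arc $\overline{\Lambda\setminus\{j\}}\to\overline{\{j\}}$, hence is acyclic, and by Proposition~\ref{prop: char vertices minkowski sum of simplices} this face has dimension $N-2$, i.e.\ is a facet, with supporting inequality $x_j\ge0$. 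For $S=\{0,1,\dots,sm-1\}$ with $1\le m\le n-1$ one has $\lambda_i\cap S\neq\varnothing$ iff $i\le m-1$; the sets $V(F_i)$ with $i\le m-1$ merge to the class $\{0,\dots,sm-1\}$ (they cover it and consecutive ones overlap), the sets $V(F_i)=\lambda_i$ with $i\ge m$ merge to the class $\{sm,\dots,N-1\}$, there are no further merges, and $G_\Pi$ carries only the arc from the first class to the second; again the face is a facet, with supporting inequality $\sum_{l=0}^{sm-1}x_l\le m$, and symmetrically for $S=\{N-sm,\dots,N-1\}$. These $N+2(n-1)$ subsets are pairwise distinct, so the facets are distinct, and with the upper bound this gives exactly $(s+2)(n-1)+k$ facets. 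Finally, if $1<k\le s$ the windows are pairwise disjoint, so $P_{n,k,s}=\prod_{i=0}^{n-1}\Delta_{\lambda_i}$ is a product of $n$ simplices of dimension $k-1$, with $\sum_i k=nk$ facets; and if $k=s+1$ consecutive windows meet in a single point, so any choice of faces $(F_i)$ can be realized by a linear functional built window by window (each new window shares one point with the part already handled, and one scalar parameter matches the value there), whence the face poset of $P_{n,k,s}$ is the product of those of the $\Delta_{\lambda_i}$ and again there are $nk$ facets.

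\emph{Main obstacle.} The technical core is the inclusion $R\subseteq P_{n,k,s}$: one must set up the transportation feasibility correctly (the binding condition is phrased via $\Gamma(l)\subseteq W$, not via the neighbourhood of $W$), justify the reduction to interval $W$'s using contiguity of the $\Gamma(l)$, and match several cases of the resulting inequalities to the defining inequalities of $R$ through the identity $\sum_l x_l=n$. The facet-defining verifications are routine but need a little care at the two ends of the strip, where (when $k>2s$) several windows can straddle the cut $S$.
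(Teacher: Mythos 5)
Your proof is correct, but the decisive step is reached by a genuinely different route than the paper's. The paper works entirely inside the acyclic--digraph formalism of Section~\ref{sec_faces_dag}: Lemma~\ref{lem_graphs_facets} identifies facets with two-class graphs $B\to A$, and the combinatorial heart is Lemma~\ref{lem_vp_facets}, which classifies the possible classes $A$ with $|A|>1$ as prefix or suffix unions of windows; the singleton classes are then analyzed case by case (with the $n-1$ ``bad'' points $a=r(k-1)+1$ excluded when $k=s+1$), and the H-description is only extracted afterwards in Corollary~\ref{cor: H representation of P_n,k,s}. You instead prove the H-description $P_{n,k,s}=R$ \emph{first}, via a transportation-feasibility (Gale/max-flow--min-cut) argument whose cut condition reduces to intervals of windows because each $\Gamma(l)$ is contiguous; this caps the facet count at $N+2(n-1)=(s+2)(n-1)+k$, and you then saturate the bound by exhibiting each inequality as facet-defining with Proposition~\ref{prop: char vertices minkowski sum of simplices} (only this realization step overlaps with the paper). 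What your route buys: it bypasses the delicate classification in Lemma~\ref{lem_vp_facets}, and it delivers the inequality description as the main engine rather than an afterthought (your inequalities $\sum_{l=0}^{sm-1}x_l\le m$, etc., are moreover stated in the correct direction, whereas the paper's corollary has the inequalities reversed). What the paper's route buys: it never leaves the graph/preposet machinery, so the same analysis describes the normal cones of the facets and extends to other faces. Your treatment of $k=s+1$ via the window-by-window construction of a maximizing functional (so the face poset is the product of the simplices' face posets, giving $nk$ coatoms) is also correct and consistent with Remark~\ref{rem_disjoint}; it is cleaner than, though equivalent to, the paper's count of which singleton classes fail. The only places you are terse --- distinctness of the $N+2(n-1)$ facets (distinct $0$--$1$ functionals $e_S$ span distinct rays modulo $\R(1,\dots,1)$) and the inclusion-preservation needed for the product-poset claim --- are standard and easily filled.
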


First, note that by Remark~\ref{rem_disjoint} if $1<k\le s$, then $P_{n,k,s}$ has $kn$ facets. 
For the remaining of this section we assume that $k\ge s+1$.
We now develop the tools we will use to prove this result under this assumption. Recall that  $K=|\Lambda|=s(n-1)+k$ (see Assumption~\ref{asumption}).

\begin{lemma}\label{lem_graphs_facets}
If $k\ge s+1$, then $\dim(P_{n,k,s})=K-1$ and the facets of $P_{n,k,s}$ correspond to graphs of the form $B\to A$ where $A,B$ form a partition of $\{0,1,\ldots,K-1\}$.
\end{lemma}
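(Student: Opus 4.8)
The plan is to apply Proposition~\ref{prop-dag-faces} twice: once to the whole polytope, regarded as a face of itself, to pin down $\dim(P_{n,k,s})$, and once to faces of codimension one.

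\textbf{Dimension.} First I would take the list $\Pi_0=(\Delta_{\lambda_0},\dots,\Delta_{\lambda_{n-1}})$, whose Minkowski sum is $F=P_{n,k,s}$ and which, by uniqueness of the Minkowski decomposition, is the decomposition of $P_{n,k,s}$ as a face of itself. Here $V(F_i)=\lambda_i$ for every $i$, so $\lambda_i\setminus V(F_i)=\varnothing$, the graph $G_{\Pi_0}$ has no edges, and it is (trivially) acyclic. To count $|[K]/\sim_{\Pi_0}|$, recall that $\sim_{\Pi_0}$ is the transitive closure of $a\sim b\Leftrightarrow a,b\in\lambda_i$ for some $i$; since $k\ge s+1$ the element $si+k-1$ lies in both $\lambda_i$ and $\lambda_{i+1}$ (as $s(i+1)\le si+k-1\le s(i+1)+k-1$), so consecutive windows overlap and all of $\Lambda=\lambda_0\cup\cdots\cup\lambda_{n-1}$ (Assumption~\ref{asumption}) forms a single class. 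Proposition~\ref{prop-dag-faces} then gives $\dim(P_{n,k,s})=K-|[K]/\sim_{\Pi_0}|=K-1$.

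\textbf{Facets.} Every nonempty face of $P_{n,k,s}$ decomposes uniquely as $F=\sum_i F_i$ with $F_i$ a nonempty face of $\Delta_{\lambda_i}$, and by Proposition~\ref{prop-dag-faces} such an $F$ is a face exactly when $G_\Pi$ is acyclic, with $\dim(F)=K-|[K]/\sim_\Pi|$. Comparing with $\dim(P_{n,k,s})=K-1$, the facets correspond precisely to the lists $\Pi$ for which $G_\Pi$ is acyclic and $|[K]/\sim_\Pi|=2$. It remains to check that, under these conditions, $G_\Pi$ has the form $B\to A$. Writing $[K]/\sim_\Pi=\{A,B\}$, acyclicity forbids loops and forbids both arcs $A\to B$ and $B\to A$ being present simultaneously, so the arc set is contained in a single arc. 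It cannot be empty: if $G_\Pi$ had no edges, then every summand would satisfy $\lambda_i\setminus V(F_i)=\varnothing$ — indeed a proper face $F_i\subsetneq\Delta_{\lambda_i}$ has $V(F_i)\neq\varnothing$ and $\lambda_i\setminus V(F_i)\neq\varnothing$, hence contributes an edge (a loop if these meet the same class) — so $F_i=\Delta_{\lambda_i}$ for all $i$, which by the dimension computation forces $|[K]/\sim_\Pi|=1$, a contradiction. Hence $G_\Pi$ is a single arc, i.e.\ of the form $B\to A$ after relabelling. Conversely, any $\Pi$ with $G_\Pi$ of this form is acyclic with $|[K]/\sim_\Pi|=2$, hence yields a facet, and uniqueness of the decomposition makes this a genuine correspondence.

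\textbf{Main obstacle.} I expect the only delicate points to be (i) legitimately applying the dimension formula of Proposition~\ref{prop-dag-faces} to the improper face $P_{n,k,s}$ and identifying its decomposition with $\Pi_0$, and (ii) excluding an edgeless $G_\Pi$ for a codimension-one face; the remainder is routine bookkeeping with the overlap pattern of the pooling windows.
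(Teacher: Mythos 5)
Your proposal is correct and follows essentially the same route as the paper: apply Proposition~\ref{prop-dag-faces} to $\Pi_0=(\Delta_{\lambda_0},\dots,\Delta_{\lambda_{n-1}})$, use the overlap of consecutive windows (which holds exactly when $k\ge s+1$) to see that $G_{\Pi_0}$ has a single vertex and hence $\dim(P_{n,k,s})=K-1$, and then identify facets with two-vertex acyclic graphs, which must be of the form $B\to A$. You spell out the exclusion of the edgeless two-vertex case, which the paper leaves implicit; otherwise the arguments coincide.
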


\begin{proof}
Proposition~\ref{prop: char vertices minkowski sum of simplices} implies that $P_{n,k,s}$ as a face of itself corresponds to the graph $G_\Pi$ with the least possible number of vertices.
Since $\lambda_i\cap\lambda_{i+1}\neq\varnothing$ for all $i$, then for $\Pi=(\Delta_{\lambda_0},\ldots,\Delta_{\lambda_{n-1}})$ we have that the graph $G_\Pi$ has exactly one vertex, which yields that the least possible number of vertices is one. 
All claims in the lemma now follow by using Proposition~\ref{prop: char vertices minkowski sum of simplices} again.
\end{proof}

We quickly remark that this lemma does not hold for $k<s+1$ since the facets of $P_{n,k,s}$ would correspond to graphs with $n+1$ vertices.

The preceding lemma implies that to characterize the facets under our running assumption we need to understand the structure of the graphs $G_\Pi =B\to A$.
The following lemma, which holds in the generality of Section~\ref{sec_faces_dag}, is a first step.

\begin{lemma}\label{lem_facet_obs}
Let $\Pi=(F_0,\ldots,F_{n-1})$ be such that $G_\Pi$ is the graph $B\to A$. 
For any $i$ we have that either $\lambda_i\cap B=\varnothing$ or $V(F_i)\subseteq B$. (Equivalently, either $\lambda_i\subseteq A$ or $A\cap V(F_i)=\varnothing$.)
\end{lemma}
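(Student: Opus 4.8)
The plan is to argue directly from the definition of the digraph $G_\Pi$ from Section~\ref{sec_faces_dag}, using the hypothesis that $G_\Pi$ has exactly the two vertices $A$ and $B$ (as equivalence classes of $[d]/\sim_\Pi$) and the single edge $B\to A$. Recall that an edge $\bar a\to\bar b$ of $G_\Pi$ exists precisely when there is some index $i$ with $\bar a\cap V(F_i)\neq\varnothing$ and $\bar b\cap(\lambda_i\setminus V(F_i))\neq\varnothing$. The key observation is that, since the only edge is $B\to A$ and in particular there is no loop $A\to A$, the class $A$ is a sink: for no index $i$ can we have both $A\cap V(F_i)\neq\varnothing$ and $A\cap(\lambda_i\setminus V(F_i))\neq\varnothing$. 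In other words, for every $i$, the set $A$ either contains $V(F_i)$ entirely or is disjoint from $V(F_i)$; and if $A\cap V(F_i)\neq\varnothing$ then $A\supseteq\lambda_i$ (else an element of $\lambda_i\setminus V(F_i)$ lying in $A$ would create an $A\to A$ loop, while an element lying in $B$ would create an edge $A\to B$, also forbidden).

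With this in hand the lemma is essentially a case split on whether $\lambda_i\cap B=\varnothing$. First I would record the clean dichotomy just derived: for each $i$, either $\lambda_i\subseteq A$, or $A\cap V(F_i)=\varnothing$. These two alternatives are exactly the parenthetical reformulation in the statement, so it remains only to check they match the two stated alternatives ``$\lambda_i\cap B=\varnothing$'' and ``$V(F_i)\subseteq B$''. Since $[d]=A\sqcup B$ as a disjoint union (the classes partition $[d]$), $\lambda_i\subseteq A$ is the same as $\lambda_i\cap B=\varnothing$, and $A\cap V(F_i)=\varnothing$ is the same as $V(F_i)\subseteq B$ (using $V(F_i)\subseteq\lambda_i\subseteq[d]=A\sqcup B$). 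So the two formulations are literally equivalent, and the proof is complete once the sink argument is written out.

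The only subtlety — and the step I would be most careful about — is the claim that $A\cap V(F_i)\neq\varnothing$ forces $\lambda_i\subseteq A$, rather than merely $V(F_i)\subseteq A$. This is where I must use that $G_\Pi$ has \emph{no} edges other than $B\to A$: an element $b\in\lambda_i\setminus V(F_i)$ lies in some class of $[d]/\sim_\Pi$, which is either $A$ or $B$; if it is in $A$ we get a forbidden loop $A\to A$, and if it is in $B$ we get a forbidden edge $A\to B$. Either way we contradict the assumption on $G_\Pi$, so $\lambda_i\setminus V(F_i)$ must be empty when $A$ meets $V(F_i)$, i.e.\ $V(F_i)=\lambda_i$ and hence $\lambda_i\subseteq A$. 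I would also note explicitly that if $\lambda_i\cap B\neq\varnothing$ but $V(F_i)\not\subseteq B$, then $V(F_i)\cap A\neq\varnothing$, and then the previous line gives $\lambda_i\subseteq A$, contradicting $\lambda_i\cap B\neq\varnothing$ (as $A$ and $B$ are disjoint); this is just the contrapositive packaging of the dichotomy and makes the stated ``either/or'' transparent.
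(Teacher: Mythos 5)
Your argument is correct and is essentially the paper's own proof: both reduce the statement to the observation that the class $A$ has no outgoing edges (neither a loop $A\to A$ nor an edge $A\to B$), so if $A$ meets $V(F_i)$ then every element of $\lambda_i$ must lie in the same class $A$, giving $\lambda_i\subseteq A$. The remaining translation between the two phrasings via the partition $[d]=A\sqcup B$ is exactly as you describe.
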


\begin{proof}
Suppose $V(F_i)\not\subseteq B$ and let $y\in V(F_i)\setminus B$. For $x\in\lambda_i$ we either have $x\sim_\Pi y$ or $\bar y\to \bar x$.
However, the latter is not possible since $\bar y=A$.
It follows that $\lambda_i\subseteq A$, i.e.\ $\lambda_i\cap B=\varnothing$. 
\end{proof}

Now, we focus on how the structure of the windows that define $P_{n,k,s}$ leads to a very concrete description of its facets. 
In this section we will repeatedly use the following observations: 
\begin{enumerate}
    \item If $i\le j\le k$, then $\lambda_i\cap\lambda_k\subseteq \lambda_j$.
    \item  If $i\le j\le k$ and $\lambda_i\cap\lambda_k\neq\varnothing$, then $\lambda_j\subseteq \lambda_i\cup\lambda_k$.
\end{enumerate}

 \begin{lemma}\label{lem_vp_facets}
    Let $s,k$ be positive integers such that $k\ge s+1$.
    If $\Pi=(F_0,\ldots,F_{n-1})$ is such that $G_\Pi$ is the graph $B\to A$ with $|A|>1$ and $B\neq\varnothing$, then there is $r\in[0,n-1]$ such that either
	$$
	A=	\lambda_0\cup\cdots\cup\lambda_r
	\qquad
	\text{ or }
	\qquad
	A=	\lambda_r\cup\cdots\cup\lambda_{n-1}
	.$$
 \end{lemma}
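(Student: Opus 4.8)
The statement asserts that under the running assumptions, if $G_\Pi = B\to A$ with $|A|>1$ and $B\neq\varnothing$, then $A$ is a ``prefix union'' $\lambda_0\cup\cdots\cup\lambda_r$ or a ``suffix union'' $\lambda_r\cup\cdots\cup\lambda_{n-1}$ of consecutive windows. My plan is to exploit Lemma~\ref{lem_facet_obs}, which partitions the window indices $\{0,\ldots,n-1\}$ into two sets: $\mathcal{A} = \{i : \lambda_i\subseteq A\}$ and $\mathcal{B} = \{i : \lambda_i\cap A\cap V(F_i)=\varnothing \text{, i.e. } \lambda_i\cap B\neq\varnothing\}$ (and these are disjoint and cover all indices, since the dichotomy of Lemma~\ref{lem_facet_obs} is exclusive once we check the two cases cannot both happen when $|A|>1$). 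Since $|A|>1$, the set $\mathcal{A}$ is nonempty: some window must be entirely inside $A$, because if every $\lambda_i$ met $B$ then, using the overlap structure, $A$ could contain at most one coordinate. Since $B\neq\varnothing$, the set $\mathcal B$ is nonempty as well.

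The heart of the argument is to show that $\mathcal A$ is an \emph{interval} of indices that is either an initial segment $[0,r]$ or a final segment $[r,n-1]$. First I would establish that $\mathcal A$ is an interval: suppose $i<j<\ell$ with $i,\ell\in\mathcal A$ but $j\in\mathcal B$. Then $\lambda_i,\lambda_\ell\subseteq A$ but $\lambda_j\cap B\neq\varnothing$. Using observation (2) — if $\lambda_i\cap\lambda_\ell\neq\varnothing$ then $\lambda_j\subseteq\lambda_i\cup\lambda_\ell\subseteq A$, contradicting $j\in\mathcal B$ — reduces us to the case $\lambda_i\cap\lambda_\ell=\varnothing$. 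In that case $A$ contains the two disjoint intervals $\lambda_i$ and $\lambda_\ell$; I then need to derive a contradiction with the structure $G_\Pi = B\to A$, namely that $A$ must be ``connected'' through the windows in the sense that its coordinates are all identified by $\sim_\Pi$. Concretely, any coordinate $a\in A$ lies in some window $\lambda_m$, and since $a\in A$ and $\bar a = A$ has no outgoing edge except possibly the loop (there is no edge $A\to B$), one shows $V(F_m)\subseteq A$ cannot happen while $B\neq\varnothing$ unless $\lambda_m\subseteq A$; chasing these forces $\mathcal{A}$ to be an interval and moreover forces $\lambda_i\cap\lambda_\ell\neq\varnothing$ whenever $i,\ell\in\mathcal A$, because $A=\bigcup_{i\in\mathcal A}\lambda_i$ must be a single interval of coordinates (a union of overlapping windows). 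That last point — that $A$ is exactly $\bigcup_{i\in\mathcal A}\lambda_i$ and is a contiguous block — I would get from Lemma~\ref{lem_facet_obs}: every coordinate of $A$ lies in a window, and that window is either $\subseteq A$ (so its index is in $\mathcal A$) or meets $B$; but a window meeting $B$ has all its $V(F_i)$ inside $B$, which still allows it to contain a coordinate of $A$ only if that coordinate lies in $\lambda_i\setminus V(F_i)$ — here I must be a little careful and argue that such ``stray'' coordinates of $A$ are already covered by windows in $\mathcal A$, again using the nested-overlap observations (1)–(2).

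Once $\mathcal A$ is known to be an interval $[p,q]\subseteq[0,n-1]$ with $A = \lambda_p\cup\cdots\cup\lambda_q$, the final step is to rule out the ``middle'' case $0<p$ and $q<n-1$ simultaneously. If both held, then $\lambda_{p-1}$ and $\lambda_{q+1}$ both meet $B$. But $\lambda_{p-1}\cap\lambda_p\neq\varnothing$ (consecutive windows always overlap when $k\geq s+1$, in fact $k>s$), so $\lambda_{p-1}$ shares a coordinate with $\lambda_p\subseteq A$; that shared coordinate $c$ then satisfies $c\in A$ and $c\in\lambda_{p-1}$, and since $p-1\in\mathcal B$ we have $V(F_{p-1})\subseteq B$, giving an edge $\overline{V(F_{p-1})}\to \bar c = A$, i.e. confirming the $B\to A$ shape — no contradiction yet. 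The contradiction instead comes from examining the \emph{other} direction: I would show that having windows straddling $A$ on both sides forces, via the cycle/acyclicity considerations already used for facets (a facet graph $B\to A$ has exactly two $\sim_\Pi$-classes, so $\dim = K-2$), that $B$ itself would have to split or that some coordinate escapes both $A$ and $B$ — more precisely, the complement index structure would make $\lambda_0\cup\cdots\cup\lambda_{n-1}$ disconnect, contradicting Assumption~\ref{asumption} that this union is the full contiguous $\Lambda$. So exactly one of $p=0$ or $q=n-1$ holds, yielding $A=\lambda_0\cup\cdots\cup\lambda_q$ or $A=\lambda_p\cup\cdots\cup\lambda_{n-1}$, which is the claim with $r=q$ or $r=p$ respectively.

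\textbf{Main obstacle.} The delicate part is not the interval structure of $\mathcal A$ but rather the bookkeeping that $A$ equals \emph{precisely} the union of the windows it fully contains — ruling out the possibility that $A$ picks up extra coordinates from windows that also meet $B$, and ruling out the symmetric ``two-sided'' configuration. Both require careful use of the nested-overlap observations (1) and (2) together with the definition of the edges of $G_\Pi$ (an edge $\bar a\to\bar b$ needs $a\in V(F_i)$ and $b\in\lambda_i\setminus V(F_i)$ for a common $i$), and I expect this is where the authors spend most of their ink.
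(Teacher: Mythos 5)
Your plan follows the same overall architecture as the paper's proof: both start from the observation that, since $|A|>1$ and all elements of $A$ are $\sim_\Pi$-equivalent, some $V(F_i)$ must meet $A$, whence Lemma~\ref{lem_facet_obs} gives a window $\lambda_i\subseteq A$; and both then intend to use chains realizing the transitive closure $\sim_\Pi$ together with the nested-overlap observations (1)--(2) to pin down $A$. But as written this is a plan, not a proof, and the steps you defer are precisely the substantive ones. Concretely: (a) the interval property of $\mathcal A$ in the crucial case $\lambda_i\cap\lambda_\ell=\varnothing$ is announced (``I then need to derive a contradiction\ldots chasing these forces\ldots'') but never derived; (b) the identity $A=\bigcup_{i\in\mathcal A}\lambda_i$ is flagged as delicate but not established; (c) the exclusion of the two-sided configuration is left to a sketch whose stated justification is wrong.

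The missing engine in (a) and (b) is the chain-crossing argument that the paper spells out: if $x\sim_\Pi y$ via a chain $x=x_1,\ldots,x_\ell=y$ with $x_t,x_{t+1}\in V(F_{r_t})$, and the chain must pass from the left of some window $\lambda_m$ to its right, then observation (1) forces some $x_t\in\lambda_{r_{t-1}}\cap\lambda_{r_t}\subseteq\lambda_m$; applying this with $x\in B$ and $\lambda_m\subseteq A$ (for $m$ the minimal fully-contained window index) puts an element of $B$ inside $\lambda_m$, a contradiction. The paper uses this once to show directly that $A$ is a prefix or suffix union, bypassing your separate interval step. For (c), your ``more precise'' claim --- that a middle-interval $A$ would make $\lambda_0\cup\cdots\cup\lambda_{n-1}$ disconnect, contradicting Assumption~\ref{asumption} --- is not correct: $\Lambda$ remains a contiguous block of coordinates regardless of where $A$ sits inside it. The actual contradiction is the one your parenthetical ``$B$ itself would have to split'' gestures at: the portions of $B$ on either side of $A$ can only be $\sim_\Pi$-equivalent via a chain crossing a window contained in $A$, which again plants an element of $B$ in that window. (The paper's final paragraph handles the complementary bad case $\lambda_0,\lambda_{n-1}\subseteq A$ by the same device, locating the crossing at the first window meeting $B$.) Until these chain arguments are actually carried out, the proof is incomplete.
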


\begin{proof}
Let $x,y\in A$ with $x\neq y$.
Since $x\sim_\Pi y$, there is some $F_i$ such that $A\cap V(F_i)\neq \varnothing$.
Lemma~\ref{lem_facet_obs} implies $\lambda_i\subseteq A$ so we can consider
	$$
	m:=\min\{i\in \{0,1,\ldots,n-1\} \mid \lambda_i\subseteq A\}.
	$$
Suppose $m>0$ and let $b\in \lambda_{m-1}\cap B$ and $a\in \lambda_{j}\setminus\lambda_m$ where $j>m$.
If $b\sim_\Pi a$, then there exist $x_1,\ldots,x_\ell$ such that $b=x_1$, $a=x_\ell$ and for all $i<\ell$ there is some $F_{r_i}$ such that $x_i,x_{i+1}\in V(F_{r_i})$.
Note that in particular,  $x_{i}\in \lambda_{r_{i-1}}\cap \lambda_{r_i}$ for $i=2,\ldots, \ell-1$.
Moreover, since $x_1\in \lambda_{m-1}$ and $x_{\ell}\in\lambda_j$, there is an $i$ such that $r_{i-i}\leq m \leq r_i$ from which it follows that 
$
x_{i}\in\lambda_{r_{i-1}}\cap\lambda_{r_i}\subseteq\lambda_m.
$
However, since $x_i\sim_\Pi b$ we have that $x_i\in B\cap\lambda_m$
contradicting $\lambda_m\subseteq A$.
Therefore $\lambda_j\subseteq A$ for all $j \geq m$.
It follows that $A=	\lambda_m \cup\cdots\cup\lambda_{n-1}$.
Now, suppose that $m=0$, let
	$$M:=\max\{r\in \{0,1,\ldots,n-1\} \mid \lambda_r\subseteq A\}
	$$
and suppose $M<n-1$.
A minor tweak to the argument above leads to $\lambda_j\subseteq A$ for all $j \leq M$, and from this it follows that $A=	\lambda_0\cup\cdots\cup\lambda_M$.

Last, we show that $m=0$ and $M=n-1$ is not possible. 
Let $\epsilon$ be the smallest index $i$ such that $\lambda_i\cap B\neq \varnothing$.
Note that Lemma~\ref{lem_facet_obs} implies $V(F_\epsilon)\subseteq B$.
Since $0,K-1\in A$, then $0\sim_\Pi K-1$ and there exist $x_1,\ldots,x_\ell\in A$ such that $x_1=0$, $x_\ell=K-1$
and for all $i<\ell$ there is some $F_{r_i}$ such that $x_i,x_{i+1}\in V(F_{r_i})$.
Since $x_1\in\lambda_0$ and $x_\ell\in\lambda_{n-1}$, there is a $j$ such that $r_j<\epsilon\le r_{j+1}$ and
	$$
    \lambda_{r_j}\cap \lambda_{r_{j+1}}\neq\varnothing
    \quad\Longrightarrow\quad
	V(F_\epsilon)\subseteq
	\lambda_\epsilon\subseteq \lambda_{r_j}\cup \lambda_{r_{j+1}}.
	$$
Now, suppose there exists $b\in V(F_\epsilon)\cap \lambda_{r_j}$.
However, since $V(F_\epsilon)\subseteq B$, Lemma~\ref{lem_facet_obs} would imply $V(F_{r_j})\subseteq B$ contradicting $x_j\in A$.
It follows that $V(F_\epsilon)\subseteq \lambda_{r_{j+1}}$, which, together with $V(F_\epsilon)\subseteq B$, imply that $V(F_{r_{j+1}})\subseteq B$.
However, this would imply $x_{j+1}\in B$, a contradiction. 
The lemma follows.
\end{proof}

We are now ready to count the number of facets of $P$.
    
    \begin{proof}[Proof of Theorem~\ref{thm_facets_1d}]
By Lemma~\ref{lem_graphs_facets}, we need to count the number of acyclic graphs $G_\Pi$ of the form $B\to A$
where $A,B$ form a partition of $\{0,1,\ldots,K-1\}$.

By Lemma~\ref{lem_vp_facets}, the number of such graphs with $|A|>1$ is at most $2(n-1)$.
To prove equality, let us show that for any graph as in the lemma, there is a choice of $\Pi=(F_0,\ldots,F_{n-1})$ such that $G_\Pi$ agrees with the graph.
Concretely, let $B\to A$ with $A=\lambda_0\cup\cdots\cup\lambda_r$ for some $r\in\{0,1,\ldots,n-2\}$.
For each $i$, let 
	$$
	F_i=
	\begin{cases}
	\conv{e_j\mid i\in \lambda_i}, & i\le r\\
	\conv{e_j\mid j\in\lambda_i\setminus A}, & i> r
	\end{cases}
	,$$
set $\Pi=(F_0,\ldots,F_{n-1})$ and note that $G_\Pi$ is the graph $B\to A$. The argument is analogous if $A=\lambda_r\cup\cdots\cup\lambda_{n-1}$ for some $r\in \{1,2,\ldots,n-1\}$.

Now we need to count the number of graphs $B\to A$ with $|A|=1$.
This count depends on whether $k>s+1$ or $k=s+1$.
If $k>s+1$, we claim that given $a\in\{0,\ldots,K-1\}$ there exists $\Pi$ such that $G_\Pi$ is the graph $\{0,\ldots,K-1\}\setminus \{a\}\to \{a\}$.
Indeed, for each $i$ let
	\begin{equation}\label{eq_singleton_poset}
	F_i=
	\begin{cases}
	\conv{e_j\mid j\in\lambda_i\setminus\{a\}}, & a\in\lambda_i\\
	\conv{e_j\mid j\in\lambda_i}, & a\notin\lambda_i
	\end{cases}
	\end{equation}
and consider the graph $G_\Pi$ associated to $\Pi=(F_0,\ldots,F_{n-1})$.
First, note that since $a\notin V(F_i)$ for all $i$, then $|\bar a|=1$.
Next, let us show that $G_\Pi$ has exactly two vertices. Since $k>s+1$, then $|(\lambda_i\cap\lambda_{i+1})\setminus\{a\}|\ge 1$ for all $i$. It follows that $x\sim_\Pi b\sim_\Pi y$ for all $b\in(\lambda_i\cap\lambda_{i+1})\setminus\{a\}$, $x\in\lambda_i\setminus\{a\}$, and $y\in\lambda_{i+1}\setminus\{a\}$. Repeating this for all $i$ we have that the vertices of $G_\Pi$ are $A=\{a\}$ and $B=\{0,\ldots,K-1\}\setminus \{a\}$. It is immediate that $G_\Pi$ has the edge $B\to A$.
Last, we show that $G_\Pi$ is acyclic.
If not, then by definition of this graph there would be an $i$ such that $B\cap V(F_i)\neq \varnothing$ and $B\cap(\lambda_i\setminus V(F_i))\neq\varnothing$.
However, the second condition would force both $V(F_i)=\lambda_i\setminus \{a\}$ and $a\in B$, a contradiction. 
Using Assumption~\ref{asumption} we conclude that if $k>s+1$, then $P_{n,k,s}$ has a total of
	$$
	2(n-1)+K=(s+2)(n-1)+k
	$$
facets, as desired. 

To finish the proof, suppose that $k=s+1$. 
We claim that there exists $\Pi$ such that 
    \begin{equation}\label{eq_some_facet}
    G_\Pi= \{0,1,\ldots,K-1\}\setminus\{a\}\to \{a\}
    \end{equation}
if and only if there is no $r\in[n-1]$ such that $a=r(k-1)+1$.
Note that assuming the claim holds as well as Assumption~\ref{asumption} leads to $P_{n,k,s}$ having
	$$
	2(n-1)+K-(n-1)=(n-1)+k+(n-1)s=kn
	$$
facets, as desired.
First,
suppose $a$ is such that there is no $r\in[n-1]$ such that $a=r(k-1)$ and let $\Pi=(F_0,\ldots,F_{n-1})$ be as in \eqref{eq_singleton_poset}.
Since $(\lambda_i\cap\lambda_{i+1})\setminus\{a\}\neq\varnothing$ for all $i$, we can repeat the argument in the preceding paragraph to prove that \eqref{eq_some_facet} holds.

Finally, let $a=r(k-1)+1$ with $r\in[n-1]$ and note that $\lambda_{r-1}\cap\lambda_r=\{a\}$. 
Suppose that $\Pi=(F_0,\ldots,F_{n-1})$ is such that $G_\Pi$ is as in \eqref{eq_some_facet}.
Since $|\bar a|=1$ and there is no edge with source $\bar a$, then there is no $i$ such that $a\in V(F_i)$.
Let $b\in\lambda_{r-1}\setminus\{a\}$ and $b'\in\lambda_{r}\setminus\{a\}$.
Since $b\sim_\Pi b'$, then there exist $x_1,\ldots,x_\ell$ such that $x_1=b$, $x_\ell=b'$ and for all $i<\ell$ there is some $F_{j_i}$ such that $x_i,x_{i+1}\in V(F_{j_i})$.
Now, $x_1\in \lambda_{r-1}$ and $x_{\ell}\in\lambda_r$ imply that there is an $i$ such that $j_{i-1}<r\le j_{i}$ which leads to 
    $$
    x_i\in V(F_{j_{i-1}})\cap V(F_{j_{i}}) \subseteq \lambda_{j_{i-1}}\cap \lambda_{j_{i}}\subseteq\{a\}
    ,$$
a contradiction.
Thus, $G_\Pi$ must have more than two vertices so it does not correspond to a facet.
    \end{proof}

We end this section by using our description of the rays of the normal fan of the polytope $P_{n,k,s}$ to give an inequality description of the polytope.  

\begin{corollary} \label{cor: H representation of P_n,k,s}
Suppose that $s,k$ are such that $k>s+1$.
The polytope $P_{n,k,s}$ is given by the following minimal inequality description:
\begin{eqnarray*}
\sum_{i\in[0,K-1]}x_i &=& n, 
\\
\sum_{i\notin \lambda_0\cup\cdots\cup\lambda_r} x_i &\le& n-1-r \qquad \text{for each $r\in[0,n-2]$},
\\
\sum_{i\notin \lambda_r\cup\cdots\cup\lambda_{n-1}} x_i &\le& r-1 \qquad \text{for each $r\in[n-1]$},
 \\
\sum_{i\neq a} x_i &\le& n \qquad \text{for each $a\in[0,K-1]$}.
\end{eqnarray*}
For $s,k$ with $k=s+1$, we have that $P_{n,k,s}$ is given by the same inequalities above, except that some of the ones of the fourth type are no longer used. Concretely, we only take:
\begin{eqnarray*}
\sum_{i\neq a} x_i &\le& n \qquad \text{for each $a$ for which there is no $r\in[n-1]$ with $a=r(k-1)+1$}.
\end{eqnarray*}
\end{corollary}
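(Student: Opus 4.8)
The plan is to read off the inequality description directly from the facet classification established above. First I would note that $P:=P_{n,k,s}$ is a Minkowski sum of $n$ simplices, each lying in a coordinate-sum-one hyperplane, so $P\subseteq\{x:\sum_{i\in[0,K-1]}x_i=n\}$, and that $\dim P=K-1$ by Lemma~\ref{lem_graphs_facets} (which uses $k\ge s+1$); hence this hyperplane is exactly the affine hull of $P$. Consequently the minimal inequality description of $P$ consists of that single equation together with one inequality $\langle u_F,x\rangle\le c_F$ per facet $F$, where $u_F$ is an outer normal of $F$ and $c_F=\max_{y\in P}\langle u_F,y\rangle$, and such a description is automatically irredundant. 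So it suffices to enumerate the facets, identify $u_F$, and compute $c_F$.

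For the second step I would invoke Lemma~\ref{lem_graphs_facets} and Proposition~\ref{prop-dag-faces}: every facet $F$ corresponds to a list $\Pi$ with $G_\Pi$ of the form $B\to A$ for an ordered partition $(A,B)$ of $[0,K-1]$, and $\NN(P)_F=\sigma_\Pi$. The minimal inequalities of $\sigma_\Pi$ recorded in Proposition~\ref{prop-dag-faces} (the equalities $x_a=x_{a'}$ within $A$, the equalities $x_b=x_{b'}$ within $B$, and $x_a\le x_b$ for $a\in A$, $b\in B$) exhibit $\sigma_\Pi$ as the ray of $\R^{K}/\R(1,\ldots,1)$ generated by the image of $\mathbf{1}_B:=\sum_{b\in B}e_b$. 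Hence the inequality contributed by $F$ is $\sum_{b\in B}x_b\le c_F$. Writing $P=\sum_{i=0}^{n-1}\Delta_{\lambda_i}$ and observing that $\max_{v\in\Delta_{\lambda_i}}\sum_{b\in B}v_b$ equals $1$ when $\lambda_i\cap B\ne\varnothing$ and $0$ otherwise, I get the compact formula $c_F=\#\{i\in[0,n-1]:\lambda_i\cap B\ne\varnothing\}$.

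The remaining step is to substitute the three families of facet partitions coming from Lemma~\ref{lem_vp_facets} and the proof of Theorem~\ref{thm_facets_1d}, and to rewrite each inequality using $\sum_i x_i=n$. For $A=\lambda_0\cup\cdots\cup\lambda_r=[0,sr+k-1]$ with $r\in[0,n-2]$, the interval structure of the windows gives $\lambda_i\cap B\ne\varnothing\iff i>r$, so $c_F=n-1-r$; for $A=\lambda_r\cup\cdots\cup\lambda_{n-1}$ the mirror-image count gives the analogous value; and for $A=\{a\}$ one has $c_F=\#\{i:\lambda_i\ne\{a\}\}=n$ since $|\lambda_i|=k\ge 2$. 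These produce exactly the two $r$-indexed families and the $a$-indexed family in the statement. When $k>s+1$ every singleton $\{a\}$ is a facet, so all $K=s(n-1)+k$ inequalities of the last type appear; when $k=s+1$, the last part of the proof of Theorem~\ref{thm_facets_1d} shows $\{a\}$ fails to be a facet precisely for the $n-1$ exceptional values of $a$, so exactly those inequalities of the last type drop out. That the equation is needed (codimension one) and that no further inequalities are needed—hence minimality—follows from $\dim P=K-1$ together with the fact that Theorem~\ref{thm_facets_1d} and Lemma~\ref{lem_vp_facets} account for all facets.

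The part I expect to be most error-prone is this final computation: correctly pinning down the generator of the ray $\sigma_\Pi$ (equivalently the direction of the outer normal) and the exact value of $\max_{y\in P}\langle u_F,y\rangle$ for each of the three families, so that the inequalities emerge in the precise form stated. Everything else is a direct consequence of Lemmas~\ref{lem_graphs_facets}, \ref{lem_facet_obs} and \ref{lem_vp_facets}, Proposition~\ref{prop-dag-faces}, and the facet count in Theorem~\ref{thm_facets_1d}.
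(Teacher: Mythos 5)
Your proposal is correct and follows essentially the same route as the paper's proof: take the coordinate-sum equation as the affine hull, read off each facet's outer normal $e_B=\sum_{i\in B}e_i$ from the graphs $B\to A$ classified in Lemma~\ref{lem_graphs_facets}, Lemma~\ref{lem_vp_facets} and the proof of Theorem~\ref{thm_facets_1d}, and evaluate the maximum summand-by-summand as $\#\{i:\lambda_i\cap B\neq\varnothing\}$. One caveat on your last step: your (correct) computation yields the upper bounds $\sum_{i\in B}x_i\le c_F$, equivalently $\sum_{i\in A}x_i\ge n-c_F$ after using the equation, whereas the corollary as printed writes lower bounds $\sum_{i\in B}x_i\ge c_F$ (e.g.\ for $n=2$, $k=3$, $s=1$, $r=0$ the printed inequality $x_3\ge 1$ fails at the vertex $e_0+e_1$); so the assertion that your inequalities ``produce exactly'' the displayed families should be flagged as a discrepancy in the statement's inequality directions and constants rather than silently identified with them.
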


\begin{proof}
The first equation is the affine span of $P_{n,k,s}$ since we are adding $n$ simplices and the affine span of each simplex has as equation the sum of the coordinates equals $1$.
The minimal inequality description of $P_{n,k,s}$ is determined by the rays of its normal fan.
Throughout this proof we use the notation $e_B:=\sum_{i\in B} e_i\in\R^{[0,K-1]}$ for $B\subseteq[0,K-1]$.
The ray in $\R^{[0,K-1]}/\R(1,\ldots,1)$ corresponding to $B\to A$ is $e_B$. 
The corresponding linear functional is maximized by any vertex $v=e_{i_0}+\cdots+e_{i_{n-1}}$ such that $i_j\in B$ whenever $\lambda_j\cap B\neq \varnothing$ and 
    \begin{equation}\label{eq_facet_functional}
    e_B\cdot v=|\{j\in \{0,\dots,n-1\} \mid \lambda_j\cap B\neq \varnothing\}|.
    \end{equation}
The rest of the proof is a straightforward computation of \eqref{eq_facet_functional} for the facets appearing in the proof of Theorem~\ref{thm_facets_1d}.
\end{proof}

%%%%%%%%%%%%%%%%%%%%%%%%

\section{Two-dimensional input layers}
\label{sec:2dim}

In this section we prove Theorem~\ref{recurrence.2D} counting the number of linearity regions $V_n$ of a max-pooling function on a $3\times n$ input using pooling windows of size $2\times 2$ (see Figure~\ref{fig: case 3xn}). 
By Proposition~\ref{connection.between.linearity.regions.and.face.counting} this number of linearity regions is equal to the number of vertices $V_n$ of the polytope 
\begin{equation}  \label{eqn.definition.qn}
Q_n = \sum_{\substack{0\leq i \leq 1 \\ 0 \leq j \leq n-2}} \Delta_{i,j},
\end{equation}
in the Euclidean space $\mathbb{R}^{3 \times n} \cong \mathbb{R}^{3n}$ with basis $\{e_{i,j} \ | \  0 \leq i \leq 2,\ 0 \leq j \leq n-1 \}$, 
where $\Delta_{i,j}$ denotes the polytope \[\Delta_{i,j}=\conv{e_{i,j},e_{i,j+1},e_{i+1,j},e_{i+1,j+1}}.\]

\begin{figure}[h]
    \centering
    \includegraphics[width=0.7\textwidth]{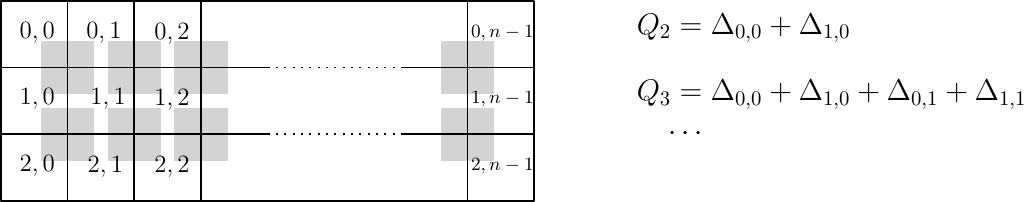}
    \caption{The $3\times n$ input with the $2n-2$ pooling windows of size $2\times 2$, and the Minkowski sums representing the polytopes $Q_2$ and  $Q_3$.}
    \label{fig: case 3xn}
\end{figure}

Before stating the main theorem of this Section, let us discuss the case in which the input array is $2\times n$ and the window size is $2\times 2$.

\begin{example}[Max-pooling on a $2\times n$ input with $2\times 2$ pooling windows] \label{example.2D.2xn}
By Proposition~\ref{connection.between.linearity.regions.and.face.counting} the number of linearity regions $V'_n$ of a max-pooling function on a $2\times n$ input using pooling windows of size $2\times 2$ is equal to the number of the vertices of the polytope 
$Q'_n=\sum_{0 \leq j \leq n-2} \Delta_{0,j}$. 
Up to a relabelling of the vertices, the polytope $Q'_n$ coincides with the polytope $P_{n-1,4,2}$ from Section~\ref{sec:enumeration}, whose number of vertices is given by both Theorem~\ref{thm:LargeStrides} and Theorem~\ref{thm:ProporStrides}. By Theorem~\ref{thm:LargeStrides}, the sequence $V'_n$ is given by 
the generating function  
\begin{equation} 
x + \sum_{n \geq 2} V'_n x^n 
= \frac{x}{1-4x+2x^2}
= x + 4 x^2 + 14 x^3 + 48 x^4 + 164 x^5 + \cdots 
\end{equation}
Equivalently, the sequence $V'_n$ is given by 
the recurrence $V'_{n+2}= 4V'_{n+1} - 2V'_{n}$, for $n \geq 2$, with initial values $V'_2=4$ and $V'_3=14$. 
\end{example}

We now state the main theorem of this section.

\begin{theorem} \label{recurrence.2D}
The number of vertices $V_n$ of the polytope $Q_n$ 
is given by the generating function
\begin{equation} \label{eq:gf Vn}
x + \sum_{n\geq 2} V_n x^n = \frac{x+x^2-x^3}{1-13x+31x^2-20x^3+4x^4} = x+ 14 x^2 + 150 x^3 + 1536 x^4 + 15594 x^5 + \cdots    
\end{equation}
Equivalently, $V_n$ 
is given by the recurrence relation
\begin{equation} \label{recurrence.2D.in.theorem}
    V_{n+4}= 13V_{n+3} - 31V_{n+2} + 20V_{n+1} - 4V_n,
\end{equation}
for all $n \geq 2$, with initial values $V_2=14$, $V_3=150$, $V_4=1536$ and $V_5=15594$. 
\end{theorem}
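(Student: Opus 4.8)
The plan is to run the same three-part strategy used for the one-dimensional polytopes $P_{n,k,s}$ in Section~\ref{sec:enumeration}: (i) use Proposition~\ref{prop-dag-faces} to turn ``vertex of $Q_n$'' into ``acyclic selection of corners'', (ii) prove that acyclicity is a \emph{local} condition along the grid, and (iii) feed the resulting automaton into the transfer-matrix method (Theorem~\ref{thm: transfer matrix}) and simplify.

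\textit{Step 1 (reduction).} By the dimension formula in Proposition~\ref{prop-dag-faces}, a face $F=\sum F_{i,j}$ of $Q_n$ is a vertex exactly when $|[3n]/\!\sim_\Pi|=3n$, i.e.\ $\sim_\Pi$ is trivial, which forces each $F_{i,j}$ to be a single vertex $e_{a_{i,j}}$ of the square $\Delta_{i,j}$; conversely any such $\Pi=(e_{a_{i,j}})$ is a vertex iff $G_\Pi$ is acyclic. Thus $V_n$ counts the functions choosing one corner from each of the $2(n-1)$ windows with $G_\Pi$ acyclic. I group these into \emph{window-columns}: for $0\le j\le n-2$ the $j$th window-column is $(\Delta_{0,j},\Delta_{1,j})$, a choice there is a pair $(a_{0,j},a_{1,j})$, and every edge of $G_\Pi$ produced by it lies among the grid points of columns $j$ and $j+1$, so distinct window-columns can only interact through a shared grid column.

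\textit{Step 2 (locality lemma, the crux).} I prove the $2$-dimensional analogue of Lemma~\ref{char: Gv acyclic}: $G_\Pi$ is acyclic if and only if the sub-digraph induced by each pair of consecutive window-columns is acyclic. One direction is immediate. For the converse I take a shortest directed cycle in $G_\Pi$; each edge is internal to a single $2\times 2$ window, and in a $3\times n$ grid two windows overlap in more than one grid point only when they are horizontally or vertically adjacent. A minimality argument exploiting that the strip is only three rows tall — any detour of the cycle into a distant column admits a shorter ``shortcut'', just as in the proof of Lemma~\ref{char: Gv acyclic} — then confines the cycle to at most two consecutive window-columns. (Concretely, the minimal cycles are vertical $2$-cycles inside one window-column, horizontal $2$-cycles, ``corner'' $3$-cycles, and pinwheel $4$-cycles around an interior grid point $(1,j)$, all living in one or two adjacent window-columns.) This is the main obstacle: the $2$D overlap pattern is richer than in $1$D, so the argument must genuinely rule out cycles that wind through three or more window-columns without a shorter witness.

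\textit{Step 3 (transfer matrix and generating function).} By Step 2, $G_\Pi$ is acyclic iff each window-column choice is internally acyclic (no vertical $2$-cycle: $14$ of the $16$ choices qualify, which already explains $V_2=14$ since $Q_2$ has a single window-column, while $V_1=1$ accounts for the leading $x$) and each consecutive pair is compatible; moreover compatibility of window-column $j$ with window-column $j+1$ depends only on the reachability relation induced on the three grid points of column $j+1$. This gives a finite state set $\mathcal S$ and a transfer matrix $T$ on $\mathcal S$ whose $(\sigma,\tau)$ entry counts the next window-column choices that realize state $\tau$ and are compatible with a predecessor in state $\sigma$. Then $V_n$ is a weighted count of walks of length $n-2$, so by Theorem~\ref{thm: transfer matrix} its generating function is rational with denominator $\det(I-xT)$; computing $T$ explicitly (as in the code \cite{code}), collapsing states with identical behaviour, and cancelling common factors yields $\dfrac{x+x^2-x^3}{1-13x+31x^2-20x^3+4x^4}$. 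The recurrence \eqref{recurrence.2D.in.theorem} is read off from the degree-$4$ denominator via Theorem~\ref{thm:GeneratingFunction}, and the base cases $V_2,\dots,V_5$ are checked either by direct enumeration on the small polytopes $Q_2,\dots,Q_5$ or by expanding the rational function, completing the proof.
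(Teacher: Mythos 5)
Your Step 2 is where the proposal breaks down: the claimed two-dimensional locality lemma is false. Unlike the one-dimensional situation of Lemma~\ref{char: Gv acyclic}, in the $3\times n$ strip there exist corner selections for which every pair of consecutive window-columns (indeed every proper contiguous sub-block) induces an acyclic digraph, yet $G_\Pi$ contains a directed cycle winding through \emph{all} $n$ grid columns. These are exactly the two ``snake'' configurations classified in the paper's Lemma~\ref{lemma.minimal.cycles}, and they admit no shorter witness: the shortcut argument you import from Lemma~\ref{char: Gv acyclic} fails here because two windows in non-adjacent window-columns overlap in at most one grid point, so a long directed path cannot be shortcut across distant columns. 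Concretely, if your locality claim were true then $V_n$ would equal the total number of length-$(n-2)$ walks in the $14$-state digraph of Definition~\ref{definition.matrix.A}, giving $1538$ at $n=4$; the correct value is $1536$, the discrepancy $2$ being precisely the two snake cycles on the $3\times 4$ board. This non-locality is the entire difficulty of the two-dimensional case and is why the paper does not run a naive transfer-matrix argument: it instead shows (Proposition~\ref{proposition.comparing.V.and.W}) that the overcount $W_{n,i}-V_{n,i}$ produced by pairwise compatibility is supported on only two of the fourteen types and equals $\sum_{0\le m\le n-4}V_{m+2,11}$, then combines this with the symmetries of Lemma~\ref{lemma.basic.relations.Vs} to close a genuine linear recursion on a reduced $6$-dimensional state vector $U_n=BU_{n-1}$.

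Your Step 3 gestures at a possible repair --- tracking ``the reachability relation induced on the three grid points of column $j+1$'' --- and an automaton whose states record which of the three shared grid points can reach which others through the left part of the graph would indeed be a legitimate alternative route to a rational generating function. But as written this is unsupported: you justify the sufficiency of that state by appeal to the false Step 2, you never construct the enlarged state set or its transfer matrix, and your explicit list of ``minimal cycles'' omits the arbitrarily long ones that make the enlarged state necessary in the first place. Until the long cycles are identified and either classified (as in Lemma~\ref{lemma.minimal.cycles}) or absorbed into a provably sufficient automaton state, the computation in Step 3 does not yield the stated denominator $1-13x+31x^2-20x^3+4x^4$.
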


With the aim of proving this theorem, which we do in Section~\ref{sec_proof_2D}, we now discuss some notation and necessary results.

For the purposes of an inductive argument on $n$ we identify the vectors $e_{i,j} \in \mathbb{R}^{3 \times n}$ and $e_{i,j} \in \mathbb{R}^{3 \times (n+1)}$ with each other. 
This in particular allows us to realize $\mathbb{R}^{3 \times n}$ as a linear subspace of $\mathbb{R}^{3 \times (n+1)}$. For each $n \geq 2$ we define the left, middle and right polytopes as follows 
\begin{equation}  \label{eqn.definition.ln.mn.qn}
L_n=\Delta_{0,0}+\Delta_{1,0},   \qquad 
M_n = \sum_{\substack{0\leq i \leq 1 \\ 1 \leq j \leq n-3}} \Delta_{i,j},  \qquad
R_n=\Delta_{0,n-2}+\Delta_{1,n-2}. 
\end{equation}
Notice that $L_n$, $M_n$ and $R_n$ are respectively the Minkowski sums of the summands with $j=0$, with $1\leq j \leq n-3$ and with $j=n-2$
in the Minkowski sum in \eqref{eqn.definition.qn}, and hence $Q_n=L_n+M_n+R_n$.

\begin{notation}   \label{notation.vertices}
By Proposition~\ref{prop: char vertices minkowski sum of simplices} the polytope $Q_2$ in $\mathbb{R}^6$ has the following 14 vertices: 
$e_{0,0}+e_{1,0}$, $e_{0,0}+e_{1,1}$, $e_{0,0}+e_{2,0}$, $e_{0,0}+e_{2,1}$, 
$e_{0,1}+e_{1,0}$, $e_{0,0}+e_{1,1}$, $e_{0,1}+e_{2,0}$, $e_{0,1}+e_{2,1}$,
$e_{1,0}+e_{1,0}$,                    $e_{1,0}+e_{2,0}$, $e_{1,0}+e_{2,1}$, 
                   $e_{1,1}+e_{1,1}$, $e_{1,1}+e_{2,0}$, and $e_{1,1}+e_{2,1}$.
These will be respectively denoted by 
\onetwo, \onefive, \onethree, \onesix, 
\fourtwo, \fourfive, \fourthree, \foursix, 
\twotwo, \twothree, \twosix, 
\fivefive, \fivethree, and \fivesix \ (see Table~\ref{table.little.boards}).
We will fix this order for the rest of the section. 
\end{notation}

\begin{table}[h]
$$\begin{array}{|c|c|c|c|c|c|c|}
\hline
e_{0,0}+e_{1,0} & 
e_{0,0}+e_{1,1} & 
e_{0,0}+e_{2,0} & 
e_{0,0}+e_{2,1} &
e_{0,1}+e_{1,0} & 
e_{0,0}+e_{1,1} & 
e_{0,1}+e_{2,0} 
\\ \hline
& & & & & & \\[-5pt]
\bigonetwo & \bigonefive & \bigonethree & \bigonesix &
\bigfourtwo & \bigfourfive & \bigfourthree
\\[7pt] \hline
\end{array}$$
\\
$$\begin{array}{|c|c|c|c|c|c|c|}
\hline
e_{0,1}+e_{2,1} &
e_{1,0}+e_{1,0} &                  e_{1,0}+e_{2,0} & 
e_{1,0}+e_{2,1} &
e_{1,1}+e_{1,1} & 
e_{1,1}+e_{2,0} & 
e_{1,1}+e_{2,1} 
\\ \hline
& & & & & & \\[-5pt]
\bigfoursix & 
\bigtwotwo & \bigtwothree & \bigtwosix & 
\bigfivefive & \bigfivethree & \bigfivesix
\\[7pt] \hline
\end{array}
$$
\caption{Dictionary for the notation introduced in Notation~\ref{notation.vertices}.}
\label{table.little.boards}
\end{table}

\begin{notation}  \label{notation.vertices.n.dimensional}
The polytopes $L_n$ and $R_n$ in $\mathbb{R}^{3 \times n}$ can be identified with the polytope $Q_2$ in $\mathbb{R}^2$ via $e_{i,j} \mapsto e_{i,j}$ and $e_{i,j} \mapsto e_{i,j-n+2}$, respectively. 
We will use these identifications to refer to each of the 14 vertices of $L_n$ and each of the 14 vertices of $R_n$ by the name of the corresponding vertex of $Q_2$ as in Notation~\ref{notation.vertices}. 
For example, the vertex \onetwo \ of $L_n$ is $e_{0,0}+e_{1,0}$ and 
 the vertex \onetwo \ of $R_n$ is $e_{0,n-2}+e_{1,n-2}$.  
We get total orders on the vertices of $L_n$ and of $R_n$ induced by the order that we fixed on the vertices of $Q_2$ in Notation~\ref{notation.vertices}. 
\end{notation}

\begin{definition}  \label{definition.matrix.A}
Let $A$ be the matrix of size $14 \times 14$ such that for each $1 \leq i,j \leq 14$, its entry in position $(i,j)$ is equal to $1$ if the $j$-th vertex of $L_3$ plus the $i$-th vertex of $R_3$ is a vertex of $Q_3=L_3+R_3$ and it is equal to $0$ otherwise, where the order of the vertices of $L_3$ and $R_3$ is as in Notation~\ref{notation.vertices.n.dimensional}. Explicitly,   
\[A = \begin{bmatrix}
1 & 1 & 1 & 0 & 1 & 1 & 1 & 0 & 1 & 1 & 0 & 0 & 0 & 0 \\
1 & 1 & 1 & 1 & 1 & 1 & 1 & 1 & 1 & 1 & 0 & 0 & 0 & 0 \\
1 & 0 & 1 & 1 & 1 & 0 & 1 & 1 & 1 & 1 & 1 & 0 & 0 & 0 \\
1 & 1 & 1 & 1 & 1 & 1 & 1 & 1 & 1 & 1 & 1 & 0 & 0 & 0 \\
1 & 1 & 1 & 0 & 1 & 1 & 1 & 0 & 1 & 1 & 0 & 1 & 1 & 0 \\
1 & 1 & 1 & 1 & 1 & 1 & 1 & 1 & 1 & 1 & 1 & 1 & 1 & 1 \\
1 & 0 & 1 & 1 & 1 & 0 & 1 & 1 & 1 & 1 & 1 & 0 & 1 & 1 \\
1 & 1 & 1 & 1 & 1 & 1 & 1 & 1 & 1 & 1 & 1 & 1 & 1 & 1 \\
1 & 1 & 1 & 0 & 0 & 0 & 0 & 0 & 1 & 1 & 0 & 1 & 1 & 0 \\
1 & 0 & 1 & 1 & 0 & 0 & 0 & 0 & 1 & 1 & 1 & 0 & 1 & 1 \\
1 & 1 & 1 & 1 & 0 & 0 & 0 & 0 & 1 & 1 & 1 & 1 & 1 & 1 \\
1 & 1 & 1 & 1 & 1 & 1 & 1 & 1 & 1 & 1 & 1 & 1 & 1 & 1 \\
1 & 0 & 1 & 1 & 0 & 0 & 1 & 1 & 1 & 1 & 1 & 0 & 1 & 1 \\
1 & 1 & 1 & 1 & 1 & 1 & 1 & 1 & 1 & 1 & 1 & 1 & 1 & 1
\end{bmatrix}, 
\]
which can be obtained by using software or by computing its entries as in Example~\ref{example.matrix.A} below. 
\end{definition}

\begin{example} \label{example.matrix.A}
Given vertices 
$p$ of $L_3=\Delta_{0,0}+\Delta_{1,0}$ and $q$ of $R_3=\Delta_{0,1}+\Delta_{1,1}$, we can write $p$ uniquely as a sum of vertices of $\Delta_{0,0}$ and $\Delta_{1,0}$, and we can write $q$ uniquely as a sum of vertices of $\Delta_{0,1}$ and $\Delta_{1,1}$.
As in Section~\ref{sec:acyclic.graph}, we get a directed graph associated to this sum of four vertices of the four distinct summands in the Minkowski sum $Q_3=\Delta_{0,0}+\Delta_{1,0}+\Delta_{0,0}+\Delta_{1,0}$.  
This graph is acyclic as a directed graph if and only if $p+q$ is a vertex of $Q_3=L_3+R_3$. 
For example, if $p=\onethree$ as a vertex of $L_3$ and $q=\onetwo$ as a vertex $R_3$, one can see that the associated directed graph contains no cycles, and then the entry $(1,3)$ of $A$ is $1$.
On the other hand, taking $p=\twosix$ and $q=\onefive$ produces a directed graph with a cycle, then the entry $(2,11)$ of $A$ is $0$.
\end{example}

\begin{example}
The matrix $A$ has 150 of its 196 entries equal to $1$ and the remaining 46 entries are equal to $0$. 
By construction, there is a bijection between the vertices of $Q_3=L_3+R_3$ and the entries of $A$ that are equal to $1$. 
Hence, the polytope $Q_3$ has $150$ vertices.
\end{example}

Since $Q_n=L_n+M_n+R_n$, then each vertex of $Q_n$ is a sum of a vertex of $L_n$, a vertex of $M_n$ and a vertex of $R_n$. 
Moreover, those three vertices are uniquely determined. 
Then, for each vertex $p$ of $Q_n$ there exist a unique vertex of $Q_2$, which we denote by $\pi(p)$, such that $p$ is the sum of the vertex of $R_n$ corresponding to $\pi(p)$ (under the identification of $Q_2$ and $R_n$) and some vertices of $L_n$ and $M_n$. 

\begin{definition} \label{notation.subindex} 
For each vertex $q$ of $Q_2$ and each $n \geq 2$, let $q_n$ denote the number of vertices $p$ of $Q_n$ such that $\pi(p)=q$. 
\end{definition}

\begin{example}
To practice the notation introduced in Definition~\ref{notation.subindex}, 
notice that for each $n \geq 2$, $V_n$ is equal to the sum of the numbers $q_n$ as $q$ ranges over the 14 vertices of $Q_2$.  
In other words, 
\[V_n=\onetwo_n+\onefive_n+\onethree_n+\onesix_n+
\fourtwo_n+\fourfive_n+\fourthree_n+\foursix_n+
\twotwo_n+\twothree_n+\twosix_n+
\fivefive_n+\fivethree_n+\fivesix_n.
\]
\end{example}

\begin{lemma} \label{lemma.basic.relations.Vs}
For each $n \geq 2$, we have  

\begin{enumerate}[label=(\alph*)] \setlength{\itemsep}{3mm}
    \item \label{lemma.basic.relations.Vs.a}
$\onetwo_n = \twothree_n,  \quad 
\onefive_n = \fivethree_n,   \quad
\twosix_n = \fourtwo_n   \quad
\textnormal{and}   \quad
\onesix_n = \fourthree_n$. 
\item \label{lemma.basic.relations.Vs.b} 
$\fourfive_n = 
\foursix_n =  
\fivefive_n =
\fivesix_n$ and moreover if $n \geq 3$ their common value is $V_{n-1}$. 
\end{enumerate}
\end{lemma}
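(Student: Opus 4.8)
The plan is to deduce part~(a) from a symmetry of $Q_n$, and part~(b) from the decomposition $Q_n=Q_{n-1}+R_n$ together with the directed-graph criterion of Proposition~\ref{prop-dag-faces}.

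For part~(a), I would use the linear involution $\phi\colon\R^{3\times n}\to\R^{3\times n}$ with $\phi(e_{i,j})=e_{2-i,j}$. Since $\phi(\Delta_{0,j})=\Delta_{1,j}$ and $\phi(\Delta_{1,j})=\Delta_{0,j}$ for every $j$, the map $\phi$ merely permutes the summands of $Q_n$ in~\eqref{eqn.definition.qn}, so $\phi(Q_n)=Q_n$, and it permutes the summands within each of $L_n$, $M_n$, $R_n$. By the uniqueness of the Minkowski decomposition of a face (used throughout Section~\ref{sec:acyclic.graph}), $\phi$ induces a permutation of the vertices of $Q_n$ satisfying $\pi(\phi(p))=\phi_\ast(\pi(p))$, where $\phi_\ast$ is the permutation of the $14$ vertices of $Q_2$ that, under the identification $Q_2\cong R_n$, is induced by $e_{i,j}\mapsto e_{2-i,j}$. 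Hence $q_n=(\phi_\ast q)_n$ for every vertex $q$ of $Q_2$, and part~(a) follows once one checks
\[
\phi_\ast(\onetwo)=\twothree,\quad
\phi_\ast(\onefive)=\fivethree,\quad
\phi_\ast(\twosix)=\fourtwo,\quad
\phi_\ast(\onesix)=\fourthree .
\]

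For part~(b), the case $n=2$ is immediate (there $q_2=1$ for every vertex $q$ of $Q_2$), so assume $n\ge 3$. Under the identification $Q_2\cong R_n$, $e_{i,j}\mapsto e_{i,j+n-2}$, the vertices $\fourfive,\foursix,\fivefive,\fivesix$ of $Q_2$ correspond to exactly the vertices of $R_n$ whose support is contained in the last grid column $C:=\{e_{0,n-1},e_{1,n-1},e_{2,n-1}\}$. Since $L_n+M_n=Q_{n-1}$ we have $Q_n=Q_{n-1}+R_n$ with $Q_{n-1}$ supported in the columns $0,\dots,n-2$, and the uniqueness of the Minkowski decomposition of a vertex shows that, for such a vertex $q$ of $R_n$, the assignment $v\mapsto v+q$ is a bijection from the vertices of $Q_{n-1}$ onto the vertices $p$ of $Q_n$ with $\pi(p)=q$ --- \emph{provided} that $v+q$ is always a vertex of $Q_n$. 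Granting this, each of $\fourfive_n,\foursix_n,\fivefive_n,\fivesix_n$ equals the number of vertices of $Q_{n-1}$, i.e.\ $V_{n-1}$, which is part~(b).

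The one point that genuinely needs work is that $v+q$ is a vertex of $Q_n$, which I would verify with Proposition~\ref{prop-dag-faces}. Encoding $v+q$ as a choice of one vertex of each summand simplex (as in Example~\ref{example.matrix.A}), the summands of $Q_{n-1}$ inherit the choices coming from $v$, while $\Delta_{0,n-2}$ and $\Delta_{1,n-2}$ receive the two vertices $a\in\{e_{0,n-1},e_{1,n-1}\}$ and $a'\in\{e_{1,n-1},e_{2,n-1}\}$ of $C$ prescribed by $q$. In the associated digraph $G_\Pi$, the induced subgraph on the complement of $C$ is precisely the digraph of $v$ viewed as a vertex of $Q_{n-1}$, hence is acyclic; and because $\Delta_{0,n-2},\Delta_{1,n-2}$ are the only summand simplices meeting $C$, every arc with head in $C$ has tail $a$ or $a'$, while all vertices of $C$ other than $a$ and $a'$ are sinks. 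A short case check --- the arc $a'\to a$ can occur only when $a=e_{1,n-1}$, and the arc $a\to a'$ only when $a'=e_{1,n-1}$, and these are mutually exclusive --- then shows that at most one of $a,a'$ has an incoming arc, whose only possible tail is the other; therefore no cycle can pass through $C$, so $G_\Pi$ is acyclic. This acyclicity verification is the main obstacle; the remainder is bookkeeping with the involution $\phi$ and with uniqueness of Minkowski decompositions.
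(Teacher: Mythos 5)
Your proposal is correct and follows essentially the same route as the paper: part~(a) via the row-reflection symmetry $e_{i,j}\mapsto e_{2-i,j}$ of $Q_n$, and part~(b) via the decomposition $Q_n=Q_{n-1}+R_n$ together with the acyclicity criterion of Proposition~\ref{prop-dag-faces}, checking that appending one of the four last-column vertices of $R_n$ cannot create a directed cycle. Your case analysis of the arcs within the last column is just a more explicit version of the paper's observation that the two arcs among $\{e_{0,n-1},e_{1,n-1},e_{2,n-1}\}$ do not form an oriented cycle.
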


\begin{proof}
The equalities in part \ref{lemma.basic.relations.Vs.a} hold by symmetry.  
The case $n=2$ of part \ref{lemma.basic.relations.Vs.b} holds since $\fourfive_2 = 
\foursix_2 =  
\fivefive_2 =
\fivesix_2=1$. 
Then, to prove part \ref{lemma.basic.relations.Vs.b}, we fix $n \geq 3$ and show that each of the subsets of the vertices of $Q_n$ counted in $\fourfive_n$, $\foursix_n$, $\fivefive_n$  and $\fivesix_n$
is in bijective correspondence with the set of vertices of $Q_{n-1}$, which by definition has cardinality $V_{n-1}$.  
Each vertex of $Q_n=Q_{n-1}+R_n$ can be written in a unique way as the sum of a vertex of $Q_{n-1}$ and a vertex of $R_n$. 
We claim that reciprocally for each vertex $q$ of $Q_{n-1}$ and either one of the vertices 
$\fourfive,   \  
\foursix,    \ 
\fivefive,  \ 
\fivesix$ 
of $R_n$, which we denote by $r$, we have that $q+r$ is a vertex of $Q_n$. 
The desired bijections follow by proving this claim. 

To prove the claim, first notice that the vertex $q$ of $Q_{n-1}$ is a sum of vertices of the $2n-4$ polytopes $\Delta_{i,j}$ in the Minkowski sum \eqref{eqn.definition.qn} defining $Q_{n-1}$, one vertex from each summand. 
Let $\Pi_{n-1}$ be the list of these $2n-4$ vertices.  
Similarly, $r$ is a sum of vertices of the two polytopes $\Delta_{i,j}$ in the Minkowski sum \eqref{eqn.definition.ln.mn.qn} defining $R_n$, one vertex from each summand.  
Combining the expressions for $q$ and $r$, we get an expression for $q+r$ as a sum vertices of the $2n-2$ polytopes $\Delta_{i,j}$ in the Minkowski sum \eqref{eqn.definition.qn} defining $Q_n=Q_{n-1}+R_n$, one vertex from each summand. 
Let $\Pi_{n}$ be the list of these $2n-2$ vertices.

We get digraphs $G_{\Pi_{n-1}}$ and $G_{\Pi_n}$ associated to $\Pi_{n-1}$ and $\Pi_{n}$, as defined in Section~\ref{sec_faces_dag}.
Their respective sets of vertices are $\{e_{i,j} \ | \  0 \leq i \leq 2, 0 \leq j \leq n-2 \}$ and $\{e_{i,j} \ | \  0 \leq i \leq 2, 0 \leq j \leq n-1 \}$. 
We know that $G_{\Pi_{n-1}}$ is acyclic as a directed graph since $q$ is a vertex of $Q_{n-1}$. 
The proof will be complete if we show that $G_{\Pi_{n}}$ is acyclic as a directed graph as well.

The vertices of $G_{\Pi_{n}}$ are the disjoint union of the vertices of $G_{\Pi_{n-1}}$ and $\{e_{0,n-1},e_{1,n-1},e_{2,n-1}\}$. 
Since the vertex $r$ of $R_n$ is one of $\fourfive, \   
\foursix,    \ 
\fivefive,  \ 
\fivesix$, the graph $G_{\Pi_{n}}$ has the following properties: 
(i) the restriction of the directed graph $G_{\Pi_{n}}$ to the set of vertices of $G_{\Pi_{n-1}}$ is precisely $G_{\Pi_{n-1}}$; 
(ii) there are no directed edges from the vertices of $G_{\Pi_{n-1}}$ to the vertices 
$\{e_{0,n-1},e_{1,n-1},e_{2,n-1}\}$;
(iii) between pairs of vertices in $\{e_{0,n-1},e_{1,n-1},e_{2,n-1}\}$ there are exactly two directed edges and they do not form an oriented cycle. 
It follows that the directed graph $G_{\Pi_{n}}$ is acyclic. 
\end{proof}

%%%%%%%%%%%%%%%%%%%%%%%%%%%%%%%%%%%%%%%%%%%%%%%%%

\begin{lemma}  \label{lemma.minimal.cycles}
Let $n \geq 4$ and let $v_{i,j}$ be a vertex of $\Delta_{i,j}$ for each $0 \leq i \leq 1$ and $0 \leq j \leq n-2$. Let 
\begin{equation*} 
l_n= v_{0,0}+v_{1,0},   \qquad 
m_n = \sum_{\substack{0\leq i \leq 1 \\ 1 \leq j \leq n-3}} v_{i,j},  \qquad
r_n=v_{0,n-2}+v_{1,n-2}. 
\end{equation*}
Suppose that $l_n+m_n$ is a vertex of $L_n+M_n$ and $m_n+r_n$ is a vertex of $M_n+R_n$, but $l_n+m_n+r_n$ is not a vertex of $Q_n=L_n+M_n+R_n$. 
Then, one of the following two cases holds
\[
v_{i,j}=
\begin{cases}
  e_{i,j}  & \text{ for } i=0, \ 1\leq j \leq n-2 \\
  e_{i+1,j} & \text{ for } i=0, \ j=0 \\ 
  e_{i,j+1}  & \text{ for }  i=1, \ j=n-2 \\
  e_{i+1,j+1} & \text{ for }  i=1, \ 0\leq j \leq n-3
\end{cases}
\qquad \text{ or } \qquad
v_{i,j}=
\begin{cases}
  e_{i,j}  & \text{ for }  i=1, \ j=0 \\
  e_{i+1,j} & \text{ for } i=1, \ 1\leq j \leq n-2 \\ 
  e_{i,j+1}  & \text{ for }  i=0, \ 0\leq j \leq n-3 \\
  e_{i+1,j+1} & \text{ for }  i=0, \  j=n-2. 
\end{cases}
\]
Intuitively, these two cases correspond to the two possible orientations of the following cycle represented on a $3 \times n$ board with rows $0\leq i \leq 2$ and columns $0\leq j \leq n-1$:
\begin{center}
\includegraphics[scale=0.2]{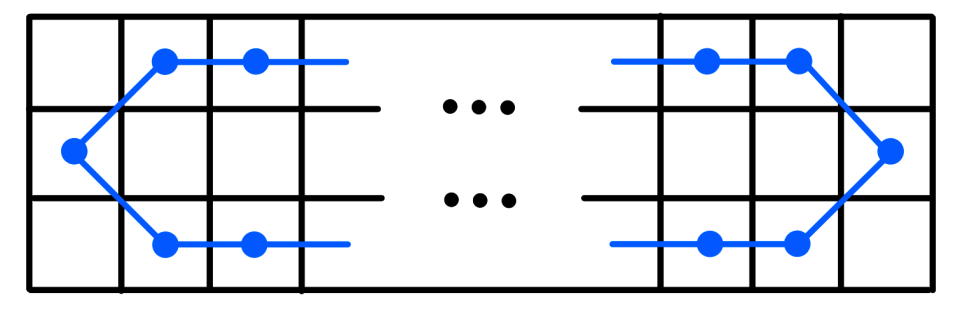}    
\end{center}
\end{lemma}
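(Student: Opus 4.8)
The plan is to work entirely with the directed-graph criterion for faces from Section~\ref{sec_faces_dag}. Write $\Pi=(v_{i,j})_{0\le i\le 1,\,0\le j\le n-2}$ for the list of chosen vertices, so each $v_{i,j}\in\{e_{i,j},e_{i,j+1},e_{i+1,j},e_{i+1,j+1}\}$ and $l_n+m_n+r_n=\sum v_{i,j}$. By Proposition~\ref{prop-dag-faces}, $l_n+m_n+r_n$ is a vertex of $Q_n$ iff $G_\Pi$ is acyclic; since every $v_{i,j}$ is a single vertex, $\sim_\Pi$ is trivial, $G_\Pi$ has the $3n$ lattice points $e_{i,j}$ as vertices, and each summand $\Delta_{i,j}$ contributes the three arcs pointing from $v_{i,j}$ to the remaining three corners of its $2\times2$ block. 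In these terms the hypotheses say: $G_\Pi$ contains a directed cycle $C$, but the subgraph $G^L$ spanned by the arcs of the summands with $j\le n-3$ and the subgraph $G^R$ spanned by the arcs of the summands with $j\ge 1$ are both acyclic (these are the digraphs attached to $l_n+m_n$ in $L_n+M_n$ and to $m_n+r_n$ in $M_n+R_n$; here $n\ge4$ guarantees $M_n\neq\varnothing$). The aim is to show $C$ must be one of the two displayed boundary cycles.

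First I would determine the coarse shape of $C$. Since $G^R$ is acyclic, $C$ must use at least one summand $\Delta_{i,0}$; since $G^L$ is acyclic, it must use at least one summand $\Delta_{i,n-2}$. A connectivity argument then forces $C$ to use a summand of \emph{every} $j$-index $0,\dots,n-2$: if no summand of $j$-index $j_0$ with $1\le j_0\le n-3$ occurred, every arc of $C$ would join two corners in columns $\le j_0$ or two corners in columns $\ge j_0+1$, contradicting that $C$ is connected and meets columns on both sides. Moreover each arc of $C$ comes from a distinct summand, since the three arcs of one summand share their source and a cycle uses at most one out-arc per vertex. Fixing $1\le j\le n-3$ and cutting between columns $j$ and $j+1$, any arc of $C$ crossing this cut must come from $\Delta_{0,j}$ or $\Delta_{1,j}$ (the only summands with corners on both sides), so $C$ crosses the cut at most twice; being a cycle meeting both sides, it crosses at least twice. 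Hence \emph{both} $\Delta_{0,j}$ and $\Delta_{1,j}$ occur in $C$, each contributing one non-vertical (``crossing'') arc. A direct consequence is that $C$ uses no vertical arc in any column $2\le j\le n-3$, since such an arc could only come from one of $\Delta_{0,j-1},\Delta_{0,j},\Delta_{1,j-1},\Delta_{1,j}$, all of which already contribute crossing arcs. From this I would deduce that $C$ splits into an \emph{upper strand} lying in rows $0,1$ and made of the arcs of the $\Delta_{0,j}$'s, a \emph{lower strand} lying in rows $1,2$ and made of the arcs of the $\Delta_{1,j}$'s, and two \emph{caps} built from the $j$-index-$0$ and $j$-index-$(n-2)$ summands; the two strands run monotonically across the columns and cannot exchange at a row-$1$ vertex (this last point being checked again via the absence of vertical arcs in interior columns).

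Finally I would use acyclicity of $G^L$ and $G^R$ to force the extremal configuration. The mechanism is always the same: any deviation from the boundary cycle produces two opposite arcs between a pair of adjacent lattice points, hence a directed $2$-cycle sitting inside $G^L$ or inside $G^R$. Concretely, if the upper strand dips to the middle row at an interior column, say $v_{0,j+1}=e_{1,j+1}$, then examining the arcs of $\Delta_{0,j},\Delta_{0,j+1},\Delta_{1,j+1}$ incident to $e_{1,j+1}$ yields a $2$-cycle inside $G^R$; symmetrically for the lower strand and for deviations near the right end inside $G^L$. Likewise a left cap that stays in columns $\{0,1\}$ would route through $e_{1,1}$, and together with the first arc of the (then necessarily row-$0$) upper strand, which emanates from $e_{0,1}$, this produces a $2$-cycle between $e_{0,1}$ and $e_{1,1}$ inside $G^L$; so each cap must pass through the corner-column vertex $e_{1,0}$, respectively $e_{1,n-1}$. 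Running through the remaining finitely many local configurations in the same way eliminates everything except the two orientations of the displayed boundary cycle, which are exactly the two cases in the statement.

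The robust part is the middle paragraph, which already pins $C$ down to two strands and two caps. The real work — and the step I expect to be the main obstacle — is the last paragraph: organising the case analysis of strands and caps so that every non-extremal possibility is matched with an explicit directed $2$-cycle inside one of the two acyclic subgraphs, and verifying that the list of cases is complete (with attention to the small-$n$ boundary behaviour of the column ranges).
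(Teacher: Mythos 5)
Your overall strategy coincides with the paper's: translate everything into the digraph criterion of Proposition~\ref{prop-dag-faces}, fix a directed cycle $C$, use acyclicity of the two sub-sums to force $C$ to use both $\Delta_{0,j}$ and $\Delta_{1,j}$ for every interior $j$, and then pin down the end configurations locally. Your cut-counting argument for the middle step (each interior column cut is crossed exactly twice, once by each of $\Delta_{0,j}$ and $\Delta_{1,j}$, hence no vertical arcs in interior columns) is correct and is a cleaner packaging of what the paper achieves with ``otherwise there is a shorter cycle missing one end.'' However, your justification for ``the strands cannot exchange at a row-$1$ vertex'' is insufficient as stated: the absence of vertical arcs does not by itself prevent an interior summand from choosing a middle-row point, say $v_{1,j}=e_{1,c}$, with both incident arcs of $C$ being crossing arcs. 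Excluding this requires the separate observation (the paper's ``forbidden edge'' step) that a middle-row choice forces an arc between $v_{0,j}$ and $v_{1,j}$, which then yields a $2$-cycle or a shortcut cycle contradicting the hypotheses.

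The more serious issue is your claim that in the endgame ``the mechanism is always the same: any deviation \ldots{} produces a directed $2$-cycle sitting inside $G^L$ or inside $G^R$.'' That is false, and a case analysis organised around it would not close. For example, take the first boundary configuration but replace $v_{0,0}=e_{1,0}$ by $v_{0,0}=e_{0,0}$: the resulting graph $G_\Pi$ contains no $2$-cycle anywhere --- indeed it is acyclic, so this configuration is an honest vertex of $Q_n$. Such deviations are excluded not by exhibiting a cycle in $G^L$ or $G^R$ but by the opposite mechanism: one shows that, given the already-determined strand structure, no directed cycle could pass through the left (or right) end summands at all, contradicting the standing hypothesis that $l_n+m_n+r_n$ is \emph{not} a vertex. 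The paper's proof uses both mechanisms (e.g.\ $v_{0,0}\in\{e_{0,0},e_{0,1}\}$ is ruled out by the ``no cycle can close'' argument, while $v_{0,0}=e_{1,1}$ is ruled out by a $2$-cycle in $L_n+M_n$). So the skeleton of your plan is right, but the last paragraph as described is a genuine gap: you need to add the in/out-degree (``the cycle cannot enter or leave this vertex'') arguments alongside the $2$-cycle arguments to eliminate all non-extremal cap configurations.
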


\begin{proof}
As in Section~\ref{sec_faces_dag}, we get a directed graph $G_{\Pi}$ associated to the list $\Pi$ of the vertices $v_{i,j}$ of $\Delta_{i,j}$ for $0 \leq i \leq 1$ and $0 \leq j \leq n-2$. 
The vertices in the list $\Pi$ add to the point $l_n+m_n+r_n$ in the Minkowski sum $Q_n$.  
By Proposition~\ref{prop: char vertices minkowski sum of simplices} the graph $G_{\Pi}$ contains a directed cycle, because $l_n+m_n+r_n$ is not a vertex of $Q_n$. 
As in Remark~\ref{remark.subgraph.removing.sinks}, we consider the directed subgraph $\Gamma=\Gamma_{\Pi}$ of $G_\Pi$ obtained by removing all vertices that are sinks from $G_\Pi$. 
Notice that $\Gamma$ must contain a directed cycle because $G_{\Pi}$ contains a directed cycle. 

Let us consider a fixed directed cycle in the graph $\Gamma$. 
The vertices of $\Gamma$ are precisely the $v_{i,j}$, for $0 \leq i \leq 1$ and $0 \leq j \leq n-2$. 
We observe that at least one of the vertices $v_{0,0}$ and $v_{1,0}$ is in the cycle because the graph obtained from $\Gamma$ by removing $v_{0,0}$ and $v_{1,0}$ and the edges through them is acyclic, since $l_n+m_n$ is a vertex of $L_n+M_n$. 
Similarly, at least one of the vertices $v_{0,n-2}$ and $v_{1,n-2}$ is in the cycle because $m_n+r_n$ is a vertex of $M_n+R_n$.
We also observe that the vertices $v_{0,1},v_{0,2},\ldots,v_{0,n-3}$ must all be in the cycle. 
Indeed, if $v_{0,j}$ is not in the cycle for some $1 \leq j \leq n-3$, then the cycle must pass twice through $v_{1,j}$ which allows us to get a shorter cycle which either does not pass through $v_{0,0}$ and $v_{1,0}$ or does not pass through $v_{0,n-2}$ and $v_{1,n-2}$, which is a contradiction.
Similarly, the vertices $v_{1,1},v_{1,2},\ldots,v_{1,n-3}$ must all be in the cycle. 

For each $1 \leq j \leq n-3$, the vertices $v_{0,j}$ and $v_{1,j}$ are not connected by an edge in $\Gamma$ because otherwise there is a shorter cycle in $\Gamma$ which either does not pass through $v_{0,0}$ and $v_{1,0}$ or does not pass through $v_{0,n-2}$ and $v_{1,n-2}$, which is a contradiction. 
Therefore none of the vertices $e_{1,1},e_{1,2},\ldots,e_{1,n-2}$ can occur among $v_{0,1},v_{0,2},\ldots,v_{0,n-3}$ or among  $v_{1,1},v_{1,2},\ldots,v_{1,n-3}$ because this would produce one such forbidden edge.  

We deduce that our cycle either contains the directed subgraphs \[v_{0,1} \rightarrow v_{0,2} \rightarrow \cdots \rightarrow v_{0,n-3}
\qquad \textnormal{and} \qquad 
v_{1,1} \leftarrow v_{1,2} \leftarrow \cdots \leftarrow v_{1,n-3}, 
\]
or the directed subgraphs
\[v_{0,1} \leftarrow v_{0,2} \leftarrow \cdots \leftarrow v_{0,n-3}
\qquad \textnormal{and} \qquad 
v_{1,1} \rightarrow v_{1,2} \rightarrow \cdots \rightarrow v_{1,n-3}. \]
By symmetry, we can assume that we are in the former case (since we can reflect and relabel if necessary). 

We know that $v_{0,j}$ is equal to $e_{0,j}$ or $e_{0,j+1}$ for each $1 \leq j \leq n-3$, as we have discarded any other possibilities for those $v_{0,j}$.  
But the existence of the directed subgraph $v_{0,1} \rightarrow v_{0,2} \rightarrow \cdots \rightarrow v_{0,n-3}$ implies that 
$v_{0,j}=e_{0,j}$ for each $1 \leq j \leq n-3$. 
Similarly, 
$v_{1,j}=e_{2,j+1}$ for each $1 \leq j \leq n-3$. 

Next we determine the values of $v_{0,0}$ and $v_{1,0}$. 
Since there is a cycle in the directed graph $\Gamma$ and $v_{0,1}=e_{0,1}$ must be in the cycle, some vertex of $\Gamma$ must have a directed edge to $v_{0,1}=e_{0,1}$. 
At this point the only possibility for the vertex with an edge to $v_{0,1}=e_{0,1}$ is $v_{0,0}$. 

The possible values of $v_{0,0}$ are $e_{0,0}$, $e_{0,1}$, $e_{1,0}$, and $e_{1,1}$.  
However, $v_{0,0}=e_{0,0}$ and $v_{0,0}=e_{0,1}$ are not possible because independent of the value of $v_{1,0}$ there would be no cycle in the directed graph $\Gamma$ containing either $v_{0,0}$ or $v_{1,0}$, which is a contradiction.  
Now, $v_{0,0}=e_{1,1}$ is also not possible because this would create the cycle $v_{0,0}=e_{1,1} \rightarrow v_{0,1}=e_{0,1} \rightarrow v_{0,0}=e_{1,1}$, which by Proposition~\ref{prop: char vertices minkowski sum of simplices} contradicts that $l_n+m_n$ is a vertex of $L_n+M_n$. We deduce that $v_{0,0}=e_{1,0}$. 

The possible values of $v_{1,0}$ are $e_{1,0}$, $e_{1,1}$, $e_{2,0}$, and $e_{2,1}$.  
However, $v_{1,0}=e_{1,0}$ and $v_{1,0}=e_{2,0}$ are not possible since $v_{0,0}=e_{1,0}$ and hence there would be no cycle in the directed graph $\Gamma$ containing either $v_{0,0}$ or $v_{1,0}$, which is a contradiction.  
Now, $v_{1,0}=e_{1,1}$ is also not possible because this would create the cycle $v_{0,0}=e_{1,0} \rightarrow v_{1,0}=e_{1,1} \rightarrow v_{0,0}=e_{1,0}$ which by Proposition~\ref{prop: char vertices minkowski sum of simplices} contradicts that $l_n+m_n$ is a vertex of $L_n+M_n$. We deduce that $v_{1,0}=e_{2,1}$. 

By symmetry (or alternatively by the same argument used to get the values of $v_{0,0}$ or $v_{1,0}$), we deduce that $v_{0,n-2}=e_{0,n-2}$ and $v_{1,n-2}=e_{1,n-1}$. This completes the proof. 
\end{proof}

\begin{definition}
For each integer $n \geq 2$, let $\overline{V_n}=[V_{n,i}]_{1\leq i\leq 14}$ be the column vector whose 14 entries are 
$V_{n,1}=\onetwo_n$, $V_{n,2}=\onefive_n$, $V_{n,3}=\onethree_n$, $V_{n,4}=\onesix_n$, 
$V_{n,5}=\fourtwo_n$, $V_{n,6}=\fourfive_n$, $V_{n,7}=\fourthree_n$, $V_{n,8}=\foursix_n$, 
$V_{n,9}=\twotwo_n$, $V_{n,10}=\twothree_n$, $V_{n,11}=\twosix_n$,  $V_{n,12}=\fivefive_n$, $V_{n,13}=\fivethree_n$, and $V_{n,14}=\fivesix_n$.
This follows the same order of the vertices that we fixed earlier. 
Let $\overline{W_{n+1}}$ be the column vector $A\overline{V_n}$.
We denote the $i$-th entry of $\overline{W_n}$ by $W_{n,i}$, for $1 \leq i \leq 14$.   
For notational convenience we define the initial value $\overline{W_2}=[W_{2,i}]_{1\leq i\leq 14}$ to be equal to $\overline{V_2}$.
%In particular, $W_{2,i}=V_{2,i}=1$ for all $1\leq i\leq 14$. 

\end{definition}

\begin{example} \label{example.comparing.V.and.W}
By definition $\overline{W_2}=\overline{V_2}$ 
and hence 
$W_{2,i}=V_{2,i}=1$ for all $1 \leq i \leq 14$.  
By the definition of the matrix $A$ we have that $\overline{V_3}=A\overline{V_2}=\overline{W_3}$, and hence 
$V_{3,i}=W_{3,i}$ for all $1 \leq i \leq 14$.  
Notice that the sum of the entries of $\overline{V_n}$ is the number $V_n$ of vertices of the polytope $Q_n$. 
\end{example}

\begin{remark}  \label{remark.interpretation.W.i}
Let us discuss the interpretation of the numbers $W_{n,i}$ for fixed $n \geq 3$ and $1 \leq i \leq 14$.  
Let $v_i$ denote the $i$-th vertex of $R_{n}$ in the order of Notation~\ref{notation.vertices.n.dimensional}. 
Let $w$ be any vertex of $Q_{n-1}$. 
We can write $v_i$ uniquely as a sum of vertices $v_{0,n-2}$ of $\Delta_{0,n-2}$ and $v_{1,n-2}$ of $\Delta_{1,n-2}$, and we can write $w$ uniquely as a sum of vertices $v_{l,j}$ of $\Delta_{l,j}$ for $0\leq l \leq 1$ and $0 \leq j \leq n-3$. 
As in Section~\ref{sec_faces_dag}, we get a directed graph $G_{\Pi}$ where $\Pi$ is the list of these $2(n-1)$ vertices of the $2(n-1)$ summands in the Minkowski sum $Q_n= \sum \Delta_{l,j}$ where $0\leq l \leq 1$ and $0 \leq j \leq n-2$.
As in Remark~\ref{remark.subgraph.removing.sinks}, by removing the sinks and the directed edges to the sinks we get a directed subgraph $\Gamma_{v_i,w}:=\Gamma_{\Pi}$. 
The vertices of $\Gamma_{v_i,w}$ are precisely the $v_{l,j}$ for $0\leq l \leq 1$ and $0 \leq j \leq n-2$ and its directed edges are the directed edges between them in $G_{\Pi}$. 
Since all vertices removed when passing from $G_{\Pi}$ to $\Gamma_{v_i,w}$ are sinks, then $G_{\Pi}$ is acyclic as a directed graph if and only if $\Gamma_{v_i,w}$ is acyclic as a directed graph. 
Hence, we know that $\Gamma_{v_i,w}$ is acyclic as a directed graph if and only if $v_i+w$ is a vertex of $Q_n=Q_{n-1}+R_n$. 
In our present case $\Gamma_{v_i,w}$ might have cycles. 
By the definition of the matrix $A$, the number $W_{n,i}$ is equal to the number of pairs $(v_i,w)$ as above (that is, with $v_i$ the $i$-th vertex of $R_n$ and $w$ any vertex of $Q_{n-1}$) such that the directed graph $\Gamma_{v_i,w}$ has no directed cycles whose vertices $\{v_{l,j}\}$ all satisfy $0 \leq j \leq n-3$ and has no directed cycles whose vertices $\{v_{l,j}\}$ all satisfy $n-3 \leq j \leq n-2$.  
\end{remark}

\begin{proposition}  \label{proposition.comparing.V.and.W}
\begin{enumerate}[label=(\alph*)]
    \item For each integer $n \geq 2$ and each integer $1 \leq i \leq 14$, such that $i \neq 2$ and $i \neq 13$, we have that $V_{n,i}=W_{n,i}$.  \label{proposition.comparing.V.and.W.part.a}
    \item For each integer $n \geq 2$, we have that 
$V_{n,2}=V_{n,13}=V_{n,5}=V_{n,11}$.  \label{proposition.comparing.V.and.W.part.b}
\end{enumerate}
\end{proposition}
\begin{proof}
Since $V_{2,i}=W_{2,i}=1$ and $V_{3,i}=W_{3,i}$ for all $1 \leq i \leq 14$, we can assume that $n \geq 4$. 
Let us fix integers $n \geq 4$ and $1 \leq i \leq 14$ and start by comparing $V_{n,i}$ and $W_{n,i}$.   

Let $v_i$ denote the $i$-th vertex of $R_n$ in the order of Notation~\ref{notation.vertices.n.dimensional}. 
Recall that $V_{n,i}$ is equal to the number of vertices $q$ of $Q_n$ with $\pi(q)=\pi(v_i)$.
Any such vertex $q$ of $Q_n$ with $\pi(q)=\pi(v_i)$ is equal to a sum $q=w+v_i$ where $w$ is some uniquely determined vertex of $Q_{n-1}$, but not all pairs $(v_i,w)$ where $w$ is a vertex of $Q_{n-1}$ satisfy that $w+v_i$ is a vertex of $Q_n$. 

For each vertex $w$ of $Q_{n-1}$, let $\Gamma_{v_i,w}$ be the directed graph $\Gamma_{v_i,w}$ defined in Remark~\ref{remark.interpretation.W.i}.  
We know that $V_{n,i}$ is the number of pairs $(v_i,w)$, where $w$ is a vertex of $Q_{n-1}$, such that $\Gamma_{v_i,w}$ is acyclic as a directed graph. 
By Remark~\ref{remark.interpretation.W.i}, we know that $W_{n,i}$ is the number of pairs $(v_i,w)$, where $w$ is a vertex of $Q_{n-1}$,  
such that $\Gamma_{v_i,w}$ has no directed cycles whose vertices $\{v_{l,j}\}$ all satisfy $0 \leq j \leq n-3$ and has no directed cycles whose vertices $\{v_{l,j}\}$ all satisfy $n-3 \leq j \leq n-2$.

Then, $W_{n,i}-V_{n,i}$ is equal to the number of pairs $(v_i,w)$ where $w$ is a vertex of $Q_{n-1}$ such that the graph $\Gamma_{(v_i,w)}$ contains a directed cycle, 
but it has no directed cycles whose vertices $\{v_{l,j}\}$ all satisfy $0 \leq j \leq n-3$ and has no directed cycles whose vertices $\{v_{l,j}\}$ all satisfy $n-3 \leq j \leq n-2$.  
Fix one such pair $(v_i,w)$ and let $0 \leq m \leq n-2$ be the largest integer such that there is a directed cycle whose vertices $\{v_{l,j}\}$ all satisfy $m \leq j \leq n-2$.  
By assumption such $m$ exists and satisfies $0 \leq m \leq n-4$. 
It follows that the graph $\Gamma_{v_i,w}$ has no directed cycles whose vertices $\{v_{l,j}\}$ all satisfy $m \leq j \leq n-3$ and has no directed cycles whose vertices $\{v_{l,j}\}$ all satisfy $m+1 \leq j \leq n-2$.

Recall that our vertex $v_i$ of $R_n$ is a sum of a vertex $v_{0,n-2}$ of $\Delta_{0,n-2}$ and a vertex $v_{1,n-2}$ of $\Delta_{1,n-2}$ in a unique way. 
Also, our vertex $w$ of $Q_{n-1}$ is a sum of vertices $v_{l,j}$ of $\Delta_{l,j}$ for $0\leq l\leq 1$ and $0 \leq j \leq n-3$ in a unique way. 
Notice that among those $2(n-1)$ vertices, the $2(n-m-1)$ vertices of $\Delta_{l,j}$ with $m \leq j \leq n-2$ satisfy the assumptions of Lemma~\ref{lemma.minimal.cycles} on the rectangular subarray with rows $0 \leq l \leq 2$ and columns $m  \leq j \leq n-1$ (which necessarily has at least four columns).  
By Lemma~\ref{lemma.minimal.cycles} it follows that there are two possibilities for the directed cycle within $\Gamma_{v_i,w}$ whose vertices $\{v_{l,j}\}$ all satisfy $m \leq j \leq n-2$ and they have one of the two forms described in that lemma (see Figure~\ref{fig: new cycles}). 
\begin{figure}[h]
    \begin{center}
    \includegraphics[width=0.52\textwidth]{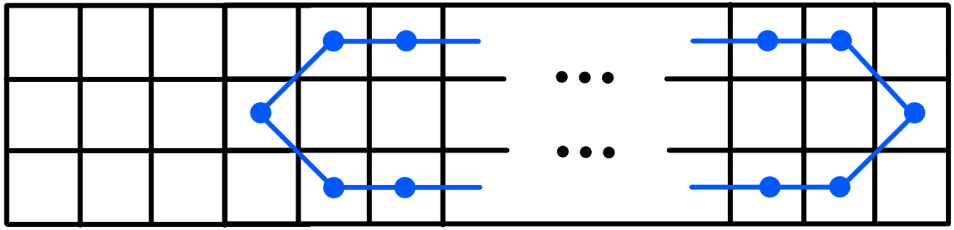}
    \caption{Form of the cycles that are counted in $W_{n,i}$ but not counted in $V_{n,i}$.}
    \label{fig: new cycles}
    \end{center}
\end{figure}

In particular, the pairs $(v_i,w)$ counted in $W_{n,i}-V_{n,i}$ as above only occur when $\pi(v_i) = \onefive$ or $\pi(v_i) = \fivethree$. 
This implies the claim in part \ref{proposition.comparing.V.and.W.part.a} since $i \neq 2$ and $i \neq 13$ implies $\pi(v_i)\neq \pi(v_2) = \onefive$ and $\pi(v_i)\neq \pi(v_{13}) = \fivethree$. 

Let us now focus on the case $i=2$. First we count the possible pairs $(v_2,w)$ that contribute to $W_{n,2}-V_{n,2}$, for each fixed $0 \leq m \leq n-4$ as above.   
The unique directed cycle within $\Gamma_{v_2,w}$ whose vertices $\{v_{l,j}\}$ all satisfy $m \leq j \leq n-2$ has the form in Figure~\ref{fig: new cycles colored}. 
\begin{figure}[h]
    \begin{center}
    \includegraphics[width=0.52\textwidth]{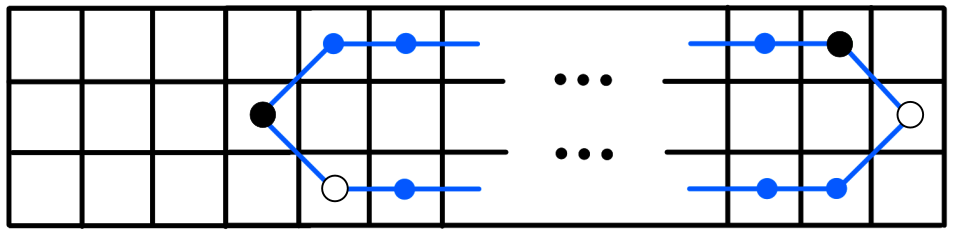}
    \caption{Form of the cycles that are counted in $W_{n,2}$ but not counted in $V_{n,2}$.}
    \label{fig: new cycles colored}
    \end{center}
\end{figure}

We can write $w$ in a unique way as $w=a+b+c$ where $a$, $b$ and $c$ are vertices of  
$\sum \Delta_{l,j}$ where $0\leq l \leq 1$ and respectively $0 \leq j \leq m-1$, $j=m$ and $m+1 \leq j \leq n-3$. 
Notice that the equation $w=a+b+c$ includes an abuse of notation since in the case $m=0$ there is no $a$ and $w=b+c$, but this does not affect the argument below.   
Since we have assumed $i=2$, $b$ and $c$ are determined by  Lemma~\ref{lemma.minimal.cycles}, and in particular $\pi(b)=\twosix$.
Then the possible values of $w$ are in correspondence with the possible values of $a$, which in turn are equal to the possible values of $a+b$.  
Now we notice that the possible values of $a+b$ are precisely the number of vertices of $\sum \Delta_{l,j}$ with $0 \leq l \leq 1$ and $0 \leq j \leq m$, such that $\pi(a+b)=\twosix$.
This number is by definition $V_{m+2,11}$, since $\twosix$ is the eleventh vertex of $Q_2$. 
Now, we let $m$ vary over its possible values $0 \leq m \leq n-4$ and conclude that for all $n \geq 4$ (and even for $n=2$ and $n=3$) we have 
\begin{equation} \label{equation.V.in.terms.of.W} 
V_{n,2}= W_{n,2}- \sum_{0 \leq m \leq n-4}  V_{m+2,11} = W_{n,2}-V_{2,11}-V_{3,11}-\cdots-V_{n-2,11}.
\end{equation}

Let us show by induction that $V_{n,2}=V_{n,13}=V_{n,5}=V_{n,11}$ for all $n \geq 2$ as claimed in part \ref{proposition.comparing.V.and.W.part.b}. %
The cases $n=2$ and $n=3$ hold as explained in Example~\ref{example.comparing.V.and.W}. 
Let us assume that $V_{n-1,2}=V_{n-1,13}=V_{n-1,5}=V_{n-1,11}$ holds for some $n \geq 4$. %
By Equation~\eqref{equation.V.in.terms.of.W} we deduce that 
\begin{equation} \label{equation.comparing.consecutive.V.2}
V_{n,2}-V_{n-1,2}
= W_{n,2}- \sum_{0 \leq m \leq n-4}  V_{m+2,11} 
- \left(W_{n-1,2}- \sum_{0 \leq m \leq n-5}  V_{m+2,11}\right)
= W_{n,2} - W_{n-1,2} - V_{n-2,11}. 
\end{equation} 
Using that $V_{n,5}=W_{n,5}$ by part \ref{proposition.comparing.V.and.W.part.a} and Equation~\eqref{equation.comparing.consecutive.V.2} we get the following expression for $V_{n,2} - V_{n,5}$, 
\begin{equation} \label{equation.comparing.V.2.and.V.5}
V_{n,2} - V_{n,5}
= W_{n,2} - W_{n,5} + V_{n-1,2} - W_{n-1,2} - V_{n-2,11}. 
\end{equation} 
Comparing the second and fifth rows of the matrix $A$ we can read off the following relation  
\begin{equation}
 \label{equation.comparing.rows.of.A.1} 
W_{n,2} - W_{n,5} =  V_{n-1,4} + V_{n-1,8} - V_{n-1,12} - V_{n-1,13}.  
\end{equation}
Similarly, using that $V_{n-1,4} = W_{n-1,4}$ by part \ref{proposition.comparing.V.and.W.part.a} and comparing the fourth and second rows of the matrix $A$ we can read off the following relation 
\begin{equation}
\label{equation.comparing.rows.of.A.2}
V_{n-1,4} = W_{n-1,4} = W_{n-1,2} + V_{n-2,11}.  
\end{equation} 
Replacing Equations~\eqref{equation.comparing.rows.of.A.1}-\eqref{equation.comparing.rows.of.A.2} in Equation~\eqref{equation.comparing.V.2.and.V.5} and using that by induction $V_{n-1,2}=V_{n-1,13}$, we get
\begin{align*}
V_{n,2} - V_{n,5} &= (W_{n,2} - W_{n,5}) + V_{n-1,2} - ( W_{n-1,2} + V_{n-2,11}) \\ 
&= (V_{n-1,4} + V_{n-1,8} - V_{n-1,12} - V_{n-1,13}) + V_{n-1,13} - (V_{n-1,4}) \\
&= V_{n-1,8} - V_{n-1,12}.  
\end{align*} 
Since $V_{n-1,8}=\foursix_{n-1}=\fivefive_{n-1}=V_{n-1,12}$ by Lemma~\ref{lemma.basic.relations.Vs}\ref{lemma.basic.relations.Vs.b}, we conclude that $V_{n,2} = V_{n,5}$. 
By symmetry (or alternatively by Lemma~\ref{lemma.basic.relations.Vs}\ref{lemma.basic.relations.Vs.a}) we have that 
$V_{n,2}=\onefive_{n}=\fivethree_{n}=V_{n,13}$
and 
$V_{n,5}=\fourtwo_{n}=\twosix_{n-1}=V_{n,11}$. 
Therefore, $V_{n,2}=V_{n,13}=V_{n,5}=V_{n,11}$, which completes the argument for part \ref{proposition.comparing.V.and.W.part.b}.  \qedhere
\end{proof}

%%%%%%%%%%%%%%%%%%%%%%%%%%%%%%%%%%%%%%%%%%%%%%%%%%%%%%%%%%%%%%%%%%%%%%%%%

\subsection{Proof of Theorem~\ref{recurrence.2D} and some consequences}
\label{sec_proof_2D}

\begin{proof}[Proof of Theorem~\ref{recurrence.2D}]
From Proposition~\ref{proposition.comparing.V.and.W} and Lemma~\ref{lemma.basic.relations.Vs}, for all $n \geq 2$ we have
\begin{align*}
&V_{n,1}=V_{n,10}, &V_{n,2}=V_{n,5}=V_{n,11}=V_{n,13},   \\
&V_{n,4}=V_{n,7}, & V_{n,6}=V_{n,8}=V_{n,12}=V_{n,14}.
\end{align*}
Hence, we can write all the $V_{n,i}$ in terms of $V_{n,1}$, $V_{n,2}$, $V_{n,3}$, $V_{n,4}$, $V_{n,6}$ and $V_{n,9}$.  Then, the equation $V_{n}=\sum_{1 \leq i \leq 14} V_{n,i}$ becomes 
\begin{equation} \label{formula.vn}
V_{n}= 2V_{n,1} + 4V_{n,2} + V_{n,3} + 2V_{n,4} + 4V_{n,6} + V_{n,9}.
\end{equation}
For each $n \geq 2$, let $U_n$ be the column vector with six entries equal to $V_{n,1}$, $V_{n,2}$, $V_{n,3}$, $V_{n,4}$, $V_{n,6}$ and $V_{n,9}$, in that order. Notice that the initial value $U_2$ is a column vector with six entries all equal to 1. 
Using the relations between the numbers $V_{n,i}$ encoded in the rows of the matrix $A$ in Definition~\ref{definition.matrix.A} we conclude that $U_{n}=BU_{n-1}$ where $B$ is the following $6 \times 6$ matrix
\[B = \begin{bmatrix}
%V1  V2  V3  V4  V6  V9
 2 & 2 & 1 & 1 & 1 & 1 \\ %V1
 2 & 3 & 1 & 1 & 2 & 1 \\ %V2 with W5 instead of W2 
 2 & 2 & 1 & 2 & 1 & 1 \\ %V3
 2 & 3 & 1 & 2 & 2 & 1 \\ %V4
 2 & 4 & 1 & 2 & 4 & 1 \\ %V6
 2 & 2 & 1 & 0 & 1 & 1 \\ %V9
\end{bmatrix}.  
\]
If we let $W=[2,4,1,2,4,1]$ be the row matrix whose entries are the coefficients of $V_{n,1}$, $V_{n,2}$, $V_{n,3}$, $V_{n,4}$, $V_{n,6}$ and $V_{n,9}$  in $V_n$ in the expression (\ref{formula.vn}), we deduce that $V_n$ is given by 
\[
V_n=WB^{n-2}U_2
\]
for all $n \geq 2$. Since $W$ is equal to the fifth row of $B$ and $U_2$ is equal to the sixth column of $B$, it follows that $V_n$ is equal to the entry in position $(5,6)$ of the matrix $B^{n}$ for all $n \geq 2$. 
By \eqref{eq: def matrix series} we have that 
\begin{align*}
x + \sum_{n\geq 2} V_n x^n &= \frac{(-1)^{11} Q_{5,6}(x)}{Q(x)}= \frac{x+x^2-x^3}{1-13x+31x^2-20x^3+4x^4}
\end{align*}
where $Q(x)=\det(I-xB)$ and $Q_{5,6}(x)$ is the determinant of the submatrix $(Q(x);6,5)$, as desired. 
The recurrence relation for $V_n$ in (\ref{recurrence.2D.in.theorem}) now follows from  Theorem~\ref{thm:GeneratingFunction}~(ii) and the claimed initial values are read off from the entries in position $(5,6)$ of the first five powers of $B$.
\end{proof}

%%%%%%%%%%%%%%%%%%%%%%%%%%%%%%%%%%%%%%%%%%%%%%%

\begin{remark}
From the recurrence in Theorem~\ref{recurrence.2D} 
standard methods provide a closed expression for the number of vertices $V_n$ of $Q_n$ of the form $V_n=a\alpha_1^n+b\alpha_2^n+c\alpha_3^n+d\alpha_4^n$ where $a$, $b$, $c$ and $d$ are constants and 
\begin{align}
\alpha_1&=2, \nonumber \\
\label{alpha2} \alpha_{2} &= 
\frac{1}{3} \left(11 + \sqrt[3]{\frac{1}{2} (1825 - 3 \sqrt{921})} + \sqrt[3]{\frac{1}{2} (1825 + 3 \sqrt{921})}\right) \approx 10.1311, \\
 \alpha_{3} &= \frac{11}{3} - \frac{1}{6} (1-i\sqrt{3}) \sqrt[3]{\frac{1}{2} (1825 - 3 \sqrt{921})} - \frac{1}{6} (1 + i \sqrt{3}) \sqrt[3]{\frac{1}{2} (1825 + 3 \sqrt{921})}, \nonumber \\
\alpha_{4} &= \frac{11}{3} - \frac{1}{6} (1 + i \sqrt{3}) \sqrt[3]{\frac{1}{2} (1825 - 3 \sqrt{921})} - \frac{1}{6} (1-i\sqrt{3}) \sqrt[3]{\frac{1}{2} (1825 + 3 \sqrt{921})},  \nonumber
\end{align}
are the nonzero roots of the characteristic polynomial $x^2(x-2)(x^3 - 11 x^2 + 9 x - 2)$ of the matrix $B$ above. 
Notice that the roots of the polynomial in the denominator of the generating function for $V_n$ in \eqref{eq:gf Vn} are $1/\alpha_1$, $1/\alpha_2$, $1/\alpha_3$ and $1/\alpha_4$.
Here we omit the explicit closed expression $V_n=a\alpha_1^n+b\alpha_2^n+c\alpha_3^n+d\alpha_4^n$ since the exact values of the constants $a$, $b$, $c$ and $d$ are rather lengthy, but the interested reader can compute them from the data in Theorem~\ref{recurrence.2D}. 
\end{remark}

Analogously to the one-dimensional case, we obtain general asymptotics for $V_n$ in the following corollary.

\begin{corollary} \label{cor: asymptotics gf vertices 2 dim}
The number of vertices $V_n$ of the polytope $Q_n$ satisfies

\[
\lim_{n\to \infty} \frac1n \ln V_n = \ln \alpha_2 
\approx \ln 10.1311
\approx 2.3156.
\]
\end{corollary}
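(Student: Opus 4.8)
The plan is to follow the strategy of Corollary~\ref{cor: asymptotics gf vertices 1 dim}, invoking the Perron--Frobenius asymptotics of Theorem~\ref{thm: main asymptotic theorem}. The proof of Theorem~\ref{recurrence.2D} shows that $V_n=(B^n)_{5,6}$ for all $n\geq 2$, where $B$ is the nonnegative $6\times 6$ matrix displayed there. Viewing $B$ as the adjacency matrix of the digraph on $\{1,\dots,6\}$ with an arc $i\to j$ precisely when $B_{ij}\neq 0$, an application of Theorem~\ref{thm: main asymptotic theorem} to $F^{\langle 5,6\rangle}(x)=\sum_{n\geq 0}(B^n)_{5,6}\,x^n$ will give $\lim_{n\to\infty}\frac1n\ln V_n=\ln\lambda_1$, where $\lambda_1$ is the largest positive eigenvalue of $B$, provided that $B$ is irreducible and that $\lambda_1=\alpha_2$.

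For irreducibility, observe that the only zero entry of $B$ is $B_{6,4}$, so in the associated digraph every vertex other than $6$ has an arc to every vertex, while vertex $6$ has arcs to $1,2,3,5,6$; since $6\to 1\to 4$, vertex $6$ also reaches $4$, and the digraph is strongly connected. Hence $B$ is irreducible.

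It remains to identify $\lambda_1$. By the Remark following Theorem~\ref{recurrence.2D}, the characteristic polynomial of $B$ is $x^2(x-2)(x^3-11x^2+9x-2)$, so the eigenvalues of $B$ are $0$ (with multiplicity two), $2$, and the roots $\alpha_2,\alpha_3,\alpha_4$ of $p(x):=x^3-11x^2+9x-2$. A short sign analysis shows that $p$ has exactly one real root: its derivative $p'(x)=3x^2-22x+9$ has roots $\approx 0.44$ and $\approx 6.90$, the value of $p$ is negative at both of these critical points, and $p(10)<0<p(11)$, so $\alpha_2\in(10,11)$ while $\alpha_3,\alpha_4$ form a complex-conjugate pair; from $\alpha_2\alpha_3\alpha_4=2$ and $\alpha_2>10$ we get $|\alpha_3|^2=|\alpha_4|^2=2/\alpha_2<1$. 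Thus the positive eigenvalues of $B$ are $2$ and $\alpha_2$, with $\alpha_2>2>|\alpha_3|$, so $\lambda_1=\alpha_2$ and Theorem~\ref{thm: main asymptotic theorem} yields
\[
\lim_{n\to\infty}\frac1n\ln V_n=\ln\alpha_2\approx\ln 10.1311\approx 2.3156 .
\]
Alternatively, one may apply Theorem~\ref{thm:asymptotics} directly to the rational generating function $(x+x^2-x^3)/(1-13x+31x^2-20x^3+4x^4)$ of Theorem~\ref{recurrence.2D}, after noting that its numerator $x(1+x-x^2)$ and its denominator share no root and that the denominator has the unique root $1/\alpha_2$ of smallest modulus; both facts again reduce to the description of the roots of $p$ above.

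The only point requiring any care is the claim that $\alpha_2$ strictly dominates the remaining eigenvalues of $B$, i.e.\ that the cubic $p$ does not contribute a second eigenvalue of modulus comparable to $\alpha_2$; this is precisely what the sign computation of $p$ at its two critical points settles, so no genuine obstacle remains.
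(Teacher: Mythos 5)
Your proposal is correct and takes essentially the same route as the paper: both deduce the asymptotics from Theorem~\ref{recurrence.2D} together with the Perron--Frobenius statement (Theorem~\ref{thm: main asymptotic theorem}), identifying $\alpha_2$ as the dominant eigenvalue via the factorization $x^2(x-2)(x^3-11x^2+9x-2)$ of the characteristic polynomial of $B$. You simply make explicit two details the paper leaves implicit, namely the irreducibility of $B$ and the sign analysis showing the cubic has a unique real root exceeding $2$ in modulus.
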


\begin{proof}
The result follows from Theorem~\ref{recurrence.2D}, the Perron--Frobenius theorem (Theorem~\ref{thm: main asymptotic theorem}), and the fact that $1/\alpha_2$ in \eqref{alpha2} is the smallest positive root of the denominator in \eqref{eq:gf Vn}.
\end{proof}

%%%%%%%%%%%%%%%%%%%%%%%%%%%%%

\section{Final remarks} 
\label{sec:conclusions}

\subsection{Other models for faces of generalized permutohedra}

Faces of generalized permutohedra $P=\sum_i \Delta_{\lambda_i}$ obtained from Minkowski sums of simplices were previously studied in work of Postnikov \cite{10.1093/imrn/rnn153}, Postnikov--Reiner--Williams \cite{PRW}, Agnarsson \cite{Agnarsson2009,Agnarsson2013}, and Benedetti--Bergeron--Machacek \cite{BBM}. The characterization of faces of $P$ via acyclic graphs in Section~\ref{sec_faces_dag} uses the fact that $P$ is a generalized permutohedron and the machinery of preposets from \cite{PRW}. However, the model in \cite{BBM} also uses an acyclic type object. More precisely, the authors call the polytopes $P$, {\em hypergraphic polytopes} and index them by the hypergraph $H$ with vertices $\cup_i \lambda_i$ and hyperedges $\{\lambda_i\}$. They show in \cite[Thm.\ 2.18]{BBM} that faces of $P$ are in correspondence with {\em acyclic orientations} of $H$ defined in \cite{BB} in the context of {\em Hopf algebras}.

\subsection{Combinatorial proofs of recurrences for vertices}

Lemma~\ref{lem:TowardsNums1} gives an algebraic proof of the recurrence \eqref{eq:recurrence first vals b} for the initial values of the number $b^{(k,s)}_n$ of vertices of the polytope $P_{n,k,s}$ for proportional strides $k=s(r+1)$. It would be of interest to give a combinatorial proof of this recurrence like we do in the proof of Theorem~\ref{thm:LargeStrides}.

\subsection{Connection to counting moves of a chess piece} 
The truncated  sequences $(b_n^{(k,1)})_{n=1}^{k+2}$ for $k=3,4,5$
appear in \cite[{\href{https://oeis.org/A045891}{A045891},\href{https://oeis.org/A087447}{A087447},\href{https://oeis.org/A098156}{A098156}}]{OEIS}, where they enumerate move configurations of a {\em fairy chess piece} (see, e.g.\  \cite[\href{https://oeis.org/A175655}{A175655}]{OEIS} and references therein). We leave as an open problem to give a bijection between such configurations and the objects counting the vertices $b^{(k,1)}_n$.

\subsection{Finding the numerator of generating functions of the number of vertices with determinants} \label{rem:numerator}
The proofs of some results in Section~\ref{sec:enumeration} rely on the using the relationship $k=s(r+1)$ and the induced block decomposition in the matrices. We also did not compute the numerator $P(x)$ from its original determinantal expression \eqref{eq:numeratorGF prop srides}. 
It would be interesting to use this formula to finish the computation. Note that using linear algebra one  has that 
\[
P(x) = -\frac{d}{dx} \det(Q(x)) + \sum_{(i,j) \in S} (-1)^{i+j} \det(Q(x);j,i),
\]
where $S=\{(i,j) \in \mathbb{N}^2 \mid i=s+1,\ldots,k, j=1,\ldots,k-s, i\neq j+s\}$. This reduces the sum from having $k^2$ determinants to having $(k-s)(k-s-1)$ of them.

\subsection{Enumerating other faces in the one- and two-dimensional cases}
In Section~\ref{sec:enumeration} we enumerated the vertices and facets of the polytopes $P_{n,k,s}$ using the characterization of faces of Minkowski sums of simplices from Section~\ref{sec:acyclic.graph} and the transfer-matrix method for vertices and using a direct calculation for facets. It would be of interest to enumerate other faces. See Table~\ref{table:edges one dim} and Table~\ref{table:faces one dim} for data on the number of edges and the total number of faces of the polytopes $P_{n,k,s}$, respectively obtained using an implementation \cite{code}of the graphs $G_{\Pi}$ in Section~\ref{sec_faces_dag} in SageMath \cite{sagemath}.

\begin{table}[h]
$$\begin{array}{lrrrrrrrrrrrrrrr}
\hline
k \backslash n& 1&  2& 3& 4& 5& 6& 7& 8&9&10  \\ \hline
3& 
3& 11& 34& 96& 260& 683& 1757& 4447& 11114&27493\\
4&	6& 21& 64& 180& 480& 1252& 3204& 8088&20208&50056\\
5& 10& 34& 102& 284& 752& 1920& 4822&11935&29223&70964 \\
6& 15& 50& 148& 408& 1072& 2720&6720&16368&39376&93824\\
 \hline
\end{array}
$$
\caption{Initial terms for the number of edges of the polytopes $P_{n,k,1}$.}
\label{table:edges one dim}
\end{table}

\begin{table}[h]
$$\begin{array}{lrrrrrrrrrrrrrrr}
\hline
k \backslash  n& 1&  2& 3& 4& 5& 6& 7& 8&9&10   \\ \hline
3& 
8& 26& 88& 298& 1016& 3466& 11832& 40394& 137912&470858\\
4&	16& 58& 208& 730& 2512& 8650& 29728& 102106&350704&1204426\\
5& 32& 122& 448& 1594& 5536& 18874& 64208&217738&737184&2494042 \\
6& 64& 250& 928& 3322& 11584& 39610&133408&447562&1495504&4984570\\
 \hline
\end{array}
$$
\caption{Initial terms for the total number of faces of the polytopes $P_{n,k,1}$.}
\label{table:faces one dim}
\end{table}

In Section~\ref{sec:2dim} we enumerated the vertices of the polytope $Q_n$ which correspond to linearity regions for the max-pooling method on a $3\times n$ board using windows of size $2\times 2$. It would be of interest to enumerate their facets, like we did for the polytope $P_{n,k,s}$ in Section~\ref{sec_num_facets}, and other faces. The number of facets of $Q_n$ for $n=2,3,4,5$ is $8,21,40,67$, respectively.
The code for the calculations in this article is available at \cite{code}.

\subsection*{Acknowledgments} 

This project was initiated at the Latinx Mathematicians Research Community (LMRC) kick-off workshop held by the American Institute of Mathematics (AIM) in June 2021. 
We are 
especially grateful to the organizers of the LMRC Jes\'us A.\ De Loera and Pamela E.\ Harris—and to AIM and NSF for %their 
funding the LMRC which led to this collaboration and future related opportunities. 
We thank Federico Ardila for pointing us to the Perron--Frobenius theorem, John Machacek and the anonymous referee for helpful comments. 
GM acknowledges early discussions about the linearity regions of max-pooling layers with Thomas Merkh. 
This work was facilitated by computer experiments using Sage \cite{sagemath} and its geometric  combinatorics features developed by the Sage-Combinat community \cite{Sage-Combinat}. 

This project has been partly supported by UCLA FCDA. 
LE has been partially supported by NSF Grant DMS-1855598 and NSF CAREER Grant DMS-2142656. 
JLG has been supported by the Simons Foundation, Award Number 710443. 
GM has been supported by grants ERC 757983, DFG 464109215, NSF 2145630, NSF 2212520. 
AHM has been partially supported by NSF Grants DMS-1855536 and DMS-22030407. 
PG and JGA thank the University of California, Riverside for the welcoming environment.

%\bibliographystyle{plain}
%\bibliography{references}

\end{document}